\documentclass[11pt,reqno]{amsart}
\usepackage[T1]{fontenc}
\usepackage{lmodern,microtype,amsmath,amssymb,mathtools}
\usepackage[margin=28mm]{geometry}
\usepackage{url}

\newtheorem{theorem}{Theorem}[section]
\newtheorem{proposition}[theorem]{Proposition}
\newtheorem{lemma}[theorem]{Lemma}
\newtheorem{corollary}[theorem]{Corollary}
\theoremstyle{definition}
\newtheorem{definition}[theorem]{Definition}
\theoremstyle{remark}
\newtheorem{remark}[theorem]{Remark}
\numberwithin{equation}{section}

\newcommand{\T}{\mathbb{T}}
\newcommand{\D}{\mathbb{D}}
\newcommand{\R}{\mathbb{R}}
\newcommand{\C}{\mathbb{C}}
\newcommand{\Z}{\mathbb{Z}}
\newcommand{\TT}{\mathcal{T}}
\newcommand{\FF}{\mathcal{F}}
\newcommand{\Q}{\mathcal{Q}}
\newcommand{\eps}{\varepsilon}

\newcommand{\SL}{\operatorname{SL}}
\newcommand{\SU}{\operatorname{SU}}
\newcommand{\SO}{\operatorname{SO}}
\newcommand{\Sp}{\operatorname{Sp}}
\newcommand{\Sym}{\operatorname{Sym}}
\DeclareMathOperator{\tr}{tr}
\DeclareMathOperator{\wind}{wind}
\DeclareMathOperator{\diag}{diag}
\DeclareMathOperator{\Log}{Log}
\DeclareMathOperator{\dist}{dist}

\allowdisplaybreaks

\title[Minimal acceleration and premonotonic cocycles]{Minimal acceleration and the topology of premonotonic cocycles}

\author{Xu Xia}
\address{(Xu Xia) State Key Laboratory of Mathematical Sciences,
Academy of Mathematics and Systems Science,
Chinese Academy of Sciences, 100190 Beijing, P. R. China}
\email{xiaxu14@mails.ucas.ac.cn}

\author[Jiahao Xu]{Jiahao Xu}
\address{(Jiahao Xu) Beijing International Center for Mathematical Research,
Peking University, Beijing 100871, China}
\email{xjhao512@outlook.com}
\thanks{Corresponding author: Jiahao Xu. X.~Xia was partially supported by the National Natural Science Foundation of China (Grant No.~12301218).}

\date{October 18, 2026}

\makeatletter
\@ifundefined{subjclassname@2020}{\@namedef{subjclassname@2020}{\textup{2020} Mathematics Subject Classification}}{}
\makeatother
\subjclass[2020]{Primary 37D25; Secondary 37A20, 37E45}
\keywords{Quasiperiodic cocycle, acceleration, topological degree, Lyapunov exponent, premonotonicity}

\begin{document}

\begin{abstract}
In his global theory of one-frequency Schr\"odinger
operators~\cite{Av15}, Avila asked whether the acceleration of
a real-analytic $\SL(2,\R)$ cocycle over an irrational circle
rotation is bounded below by the absolute value of its
topological degree and whether equality in nonzero degree
characterizes premonotonicity. We answer this question
affirmatively for every irrational frequency, without
arithmetic restrictions. Here premonotonicity means that some
iterate becomes strictly monotonic in the base variable after
a periodic change of coordinates~\cite{AK15}. In the positive
Lyapunov exponent regime, this provides a nonzero-degree
counterpart to Avila's characterization of uniform hyperbolicity
by vanishing acceleration.

In nonzero degree, we also characterize minimal acceleration
through signed connections and invariant positive matrix-valued
measures. In the positive-degree, positive Lyapunov exponent case,
the proof constructs an analytic positive connection using
invariant positive matrix-valued measures and a transfer--scattering
representation. A complex-analytic construction then produces a
periodic gauge. As a further consequence, for each fixed irrational
frequency and nonzero degree, the premonotonic locus is path connected:
every cocycle in this locus can be joined to the linear rotation of
the same degree by a path preserving premonotonicity.
\end{abstract}

\maketitle

\section{Introduction}

Let $\T=\R/\Z$ and fix an irrational frequency $\alpha\in\R\setminus\mathbb Q$. A real-analytic map $A:\T\to\SL(2,\R)$ defines a quasiperiodic cocycle $(\alpha,A)$ on $\T\times\R^2$ by
\[
(x,v)\longmapsto (x+\alpha,A(x)v).
\]
Its $n$-step transfer matrices and top Lyapunov exponent are given by
\[
A_n(x)=A(x+(n-1)\alpha)\cdots A(x),\qquad
L(\alpha,A)=\lim_{n\to\infty}\frac1n\int_\T\log\|A_n(x)\|\,dx,
\]
where $\|\cdot\|$ is the Euclidean operator norm and $dx$ is normalized Lebesgue measure on $\T$. For a continuous matrix field $K$ on $\T$, write $\|K\|_\infty=\sup_{x\in\T}\|K(x)\|$; spaces of continuous matrix fields carry this norm. For a holomorphic extension to $|\Im z|<h$, write $A_y(x)=A(x+iy)$. Avila introduced the acceleration \cite{Av15}
\[
\omega(\alpha,A)=\frac1{2\pi}\lim_{y\downarrow0}
\frac{L(\alpha,A_y)-L(\alpha,A)}{y},
\]
which is well defined by convexity and takes integer values. For real cocycles with $L(\alpha,A)>0$, Avila's regularity theorem \cite[Theorems~5 and~6]{Av15} identifies vanishing acceleration with uniform hyperbolicity. Indeed, $y\mapsto L(\alpha,A_y)$ is even, and quantization makes zero acceleration equivalent to its being affine near $y=0$.

For Schr\"odinger cocycles, acceleration can be described through zero counts and dual dynamics. Building on zero-counting methods of Goldstein and Schlag \cite{GS08}, Han and Schlag \cite{HS26} related determinant zero counts to acceleration for analytic Schr\"odinger cocycles with positive Lyapunov exponent under a Diophantine condition on the frequency. Ge, Jitomirskaya, You, and Zhou \cite{GJYZ26} expressed the complexified Lyapunov exponent in terms of dual Lyapunov exponents through a multiplicative Jensen formula; for trigonometric polynomial potentials at real energies, positive acceleration is half the dimension of the center of the partially hyperbolic dual cocycle. These spectral descriptions concern degree-zero cocycles. Here we study how minimal acceleration determines the real dynamics in nonzero degree.

The homotopy degree $d=\deg A\in\Z$ is induced by the deformation retraction of $\SL(2,\R)$ onto $\SO(2,\R)$. We use the convention
\[
J=\begin{pmatrix}0&-1\\1&0\end{pmatrix},\qquad
R_\theta=\exp(2\pi\theta J),\qquad \deg(x\mapsto R_{dx})=d.
\]
Nonzero degree rules out continuous real invariant line bundles and hence uniform hyperbolicity (Lemma~\ref{lem62ldd}). It also imposes the lower bound
\[
L(\alpha,A_y)\ge L(\alpha,A)+2\pi|d|y,\qquad \text{and hence}\qquad \omega(\alpha,A)\ge|d|
\]
for $0\le y<h$ (Proposition~\ref{prop21lww}). We ask which cocycles attain this lower bound.

The relevant notion is premonotonicity, introduced by Avila and Krikorian \cite{AK15}. For $A\in C^1(\T,\SL(2,\R))$, let $Q_A(x)=-A(x)^T J A'(x)$. Then $Q_A(x)$ is symmetric and gives the derivative of the vector angle:
\begin{equation}\label{eq13angg}
\frac{d}{dx}\arg(A(x)v)=\frac{v^TQ_A(x)v}{\|A(x)v\|^2},\qquad v\in\R^2\setminus\{0\}.
\end{equation}
The map $A$ is called \emph{strictly positively monotonic} if $Q_A(x)>0$ for all $x\in\T$, \emph{strictly negatively monotonic} if $Q_A(x)<0$ for all $x\in\T$, and \emph{strictly monotonic} if either holds.

\begin{definition}[\cite{AK15}, Section~3.4]
A cocycle $(\alpha,A)$ is \emph{premonotonic} if there exist $N\ge1$ and a 1-periodic gauge $B\in C^1(\T,\SL(2,\R))$ such that the conjugated iterate
\[
x\longmapsto B(x+N\alpha)A_N(x)B(x)^{-1}
\]
is strictly monotonic on $\T$.
\end{definition}

For analytic $A$, the gauge can be chosen analytic by $C^1$ approximation. By Lemma~\ref{lem21evmm}, the same gauge makes all sufficiently long iterates strictly monotonic.

Avila and Krikorian \cite{AK15} developed a theory of premonotonic cocycles that applies without arithmetic restrictions on the irrational frequency. For a fixed frequency, this class is $C^1$-open and contains all nonzero-degree $\SO(2,\R)$-valued cocycles. They proved that an analytic premonotonic cocycle with zero Lyapunov exponent is analytically conjugate to an $\SO(2,\R)$-valued cocycle. They also established analytic dependence of the Lyapunov exponent along one-parameter analytic families of analytic premonotonic cocycles at a fixed frequency.

Avila noted that premonotonic cocycles satisfy $\omega(\alpha,A)=|\deg A|$ \cite[p.~17, item~(3)]{Av15}. He asked whether $|\deg A|$ always bounds the acceleration from below and whether, in nonzero degree, equality implies premonotonicity \cite[p.~18, item~(4)]{Av15}. We prove the lower bound and the converse characterization for every irrational frequency.

\begin{theorem}\label{thm11mnn}
Let $\alpha\in\R\setminus\mathbb Q$ and $A\in C^\omega(\T,\SL(2,\R))$ with $d=\deg A\ne0$. Then
\[
\omega(\alpha,A)=|d|
\quad\Longleftrightarrow\quad
(\alpha,A)\text{ is premonotonic}.
\]
\end{theorem}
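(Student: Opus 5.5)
The forward implication ($\Leftarrow$) is the easy one. Suppose $(\alpha,A)$ is premonotonic, and assume without loss of generality that $d>0$. By Lemma~\ref{lem21evmm} there is an analytic gauge $B$ such that $C_N(x)=B(x+N\alpha)A_N(x)B(x)^{-1}$ is strictly monotonic for all large $N$.

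Acceleration is invariant under analytic conjugacy and scales by $N$ under iteration, and $\deg C_N=Nd$. It therefore suffices to show $\omega(\alpha^{\prime},C)\le\deg C$ for a strictly positively monotonic $C$; the reverse inequality is Proposition~\ref{prop21lww}. I would prove this by the standard Avila--Krikorian argument. By \eqref{eq13angg}, strict monotonicity means that for small $y>0$ the complexification $C(x+iy)$ maps the upper half-plane cone of $\C^2$ strictly into itself. Concretely, writing $v=(1,u)$ with $\Im u>0$, the Taylor expansion gives $\Im$ of the image projective coordinate $\gtrsim y\,v^*Q_C v$.

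An invariant cone field yields a dominated splitting for the complexified cocycle $(\alpha,C_y)$, so $C_y$ is uniformly hyperbolic for small $y>0$. Hence $y\mapsto L(\alpha,C_y)$ is affine on $(0,\epsilon)$, with slope $2\pi$ times the winding of the unstable section. That winding equals $\deg C$, because the invariant section lives in a contractible cone, so its degree is determined by the degree of $C$ itself. Convexity and continuity at $y=0$ then give $\omega=\deg C$.

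The converse ($\Rightarrow$) is the main obstacle. Assume $\omega(\alpha,A)=|d|=d>0$. By quantization and convexity, $L(\alpha,A_y)=L(\alpha,A)+2\pi d y$ for $y\in[0,\epsilon)$. Uniform hyperbolicity of $A_y$ for small $y>0$ follows from Avila's regularity theorem applied off the real axis, since the slope is locally constant there. This produces holomorphic-in-$x+iy$ invariant sections $u^{s}(z),u^{u}(z)$ of $\mathbb{P}\C^2$ on a strip $0<\Im z<\epsilon$.

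I would distinguish two cases.

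\emph{Case $L(\alpha,A)=0$.} I would aim to show that the equality forces $u^u$ to extend continuously to the real axis with values in the upper half-plane and nonreal limits. These limits define a real conjugacy to rotations, and hence premonotonicity. Alternatively, Avila--Krikorian-type arguments apply here.

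\emph{Case $L(\alpha,A)>0$.} This is the hardest step. My plan is to take the boundary values of the unstable section as $y\downarrow0$, in the form of limits of the positive matrices $u u^*/\|u\|^2$, and obtain an $A$-invariant positive matrix-valued measure on $\T$. The equality of the slope with $2\pi d$ should force this measure to have an analytic, strictly positive density. From this I would construct a positive definite symmetric field $P(x)$ satisfying a cohomological inequality, namely $A_N^T P(\cdot+N\alpha)$-twisted $Q$ positive. From $P$ I would build the gauge $B$ with $B^TB=P$, adjusted so that $Q_{BA_NB^{-1}}>0$, using a $\bar\partial$/harmonic-extension correction to make $B$ periodic and analytic.

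The key difficulty is ruling out mass of the invariant measure concentrating on the real Oseledets directions. I expect to handle it by showing that any such concentration would contribute extra winding, and hence strictly increase the acceleration beyond $d$.
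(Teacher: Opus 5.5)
Your argument for ``premonotonic $\Rightarrow\omega(\alpha,A)=|d|$'' is fine and essentially matches the paper's: disk contraction at small positive height, the unstable multiplier $a_y+b_ym_y$ inherits the winding of $a_y$, then convexity. The gap is in the converse. For $L(\alpha,A)>0$, your central step is impossible. Suppose a covariant positive matrix measure had an analytic positive definite density $M$, so that $M(x+\alpha)=A(x)M(x)A(x)^T$. After normalizing $\det M=1$, the gauge $M^{-1/2}$ would conjugate $(\alpha,A)$ into $\SO(2,\R)$ and force $L(\alpha,A)=0$. More generally, Lemma~\ref{lem302rkk} shows that when $L(\alpha,A)>0$ every covariant positive matrix measure is singular with rank-one density. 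In particular, the weak limits of $\Re(uu^*)/\|u\|^2\,dx$ that you propose have trace measure $dx$, so they cannot be covariant at all. Concentration on real directions is therefore not a degenerate case to be excluded by extra winding; it is the only case, and it is compatible with minimal acceleration.

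What minimal acceleration gives instead is strict positivity of the pairing $\int_\T\tr(Q_A\,d\mu)$ against every such singular $\mu$. By Hahn--Banach (Lemma~\ref{lem31hbb}), this is equivalent to a positive \emph{connection}: a symmetric, not necessarily definite, field $H$ with $Q_A+A^TH(\cdot+\alpha)A-H>0$. It is related to the gauge by $H=B^T(-J)B'$ (Lemma~\ref{lem41gga}), not by $B^TB=P$. Proving this positivity needs a mechanism your proposal lacks. The paper realizes the pairing as a boundary phase derivative of a unitary scattering family $U_\mu(z)$ on $L^2(\nu)\otimes\C^2$. It then gets a strict lower bound from the three-lines theorem, using a resolvent bound at height $\eta$ that comes from the exponential dichotomy uniformly in $\mu$ (Proposition~\ref{prop32cnnc}).

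Second, even granting $H$, integrating $B'=BJH$ gives a gauge on $\R$ with a priori arbitrary monodromy $B(x+1)=MB(x)$. Your unspecified $\bar\partial$/harmonic correction does not explain how to remove $M$ without destroying monotonicity. The paper moves $H$ along the segment to $\FF(H)$, which shifts the translation number of the monodromy by $d$, and so makes $M$ conjugate to a quarter-turn (Lemma~\ref{lem402qtt}). It then cancels the quarter-turn by the holomorphic rotation $R_{F}$, $F=\frac1{8\pi i}\Log\beta$. The Blaschke factor of Lemma~\ref{lem51mjj} is chosen so that, at a positive height, the unstable section stays in $\D$ and the stable one outside $\overline\D$. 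Strict monotonicity is then recovered only for long iterates, via Lemma~\ref{lem601ohh}.

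Finally, for $L(\alpha,A)=0$ you state the desired conclusion without a mechanism. Citing Avila--Krikorian is circular, because their rigidity theorem assumes premonotonicity. The missing step is that minimal acceleration is open and makes $L$ real analytic near $A$ through $L(\widetilde A)=L(\widetilde A_\eta)-2\pi|d|\eta$ (Proposition~\ref{prop23allc}). Since $L\ge0$ and $L(\alpha,A)=0$, $A$ is a critical point. The first-variation formula then forces the Fourier symmetry $K_{-j}=-\overline{K_j}$ of the unstable projection, which continues it across $\T$ and yields an analytic invariant metric (Proposition~\ref{prop204staa}).
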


When $L(\alpha,A)>0$, the theorem characterizes premonotonicity among nonuniformly hyperbolic cocycles of nonzero degree by the vanishing of $\omega(\alpha,A)-|d|$. Section~\ref{sec9mchh} compares this with Avila's degree-zero criterion for uniform hyperbolicity. Related results concern acceleration and dynamical degree for compact $\SO(s,\R)$ cocycles \cite[Theorem~B]{HPZ24}, and harmonicity of the Lyapunov exponent for monotonic cocycles \cite{Tak25}.

Minimal acceleration also admits an infinitesimal characterization. Write $\Sym_2\R$ for the space of real symmetric $2\times2$ matrices. We call a 1-periodic field $H:\T\to\Sym_2\R$ a \emph{positive connection} for $(\alpha,A)$ if
\begin{equation}\label{eq104psii}
Q_A(x)+A(x)^TH(x+\alpha)A(x)-H(x)>0\qquad (x\in\T).
\end{equation}
This is the positive form of Avila's signedness condition \cite[eq.~(35) and Remark~17]{Av15}. Writing $T_\alpha x=x+\alpha$, Hahn--Banach separation expresses the obstruction to the existence of a continuous positive connection in terms of nonzero finite $\Sym_2\R$-valued Borel measures $\mu$ that are positive, meaning $\mu(E)$ is positive semidefinite for every Borel set $E\subset\T$, and satisfy
\[
\mu(T_\alpha E)=\int_E A(x)\,d\mu(x)\,A(x)^T
\quad\text{for all Borel sets }E\subset\T.
\]
Integration against matrix-valued measures is understood entrywise.

\begin{proposition}\label{prop103crr}
Let $\alpha\in\R\setminus\mathbb Q$ and $A\in C^\omega(\T,\SL(2,\R))$ with $d=\deg A>0$. The following are equivalent:
\begin{enumerate}
\item $\omega(\alpha,A)=d$;
\item there exists a real-analytic map $H:\T\to\Sym_2\R$ satisfying \eqref{eq104psii};
\item every nonzero finite invariant positive matrix-valued measure $\mu$ satisfies $\int_\T\tr(Q_A\,d\mu)>0$.
\end{enumerate}
For $d<0$, the corresponding equivalence holds with $\omega(\alpha,A)=-d=|d|$ in~(1) and with the inequalities in~(2) and~(3) reversed.
\end{proposition}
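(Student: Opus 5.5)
The plan is to prove the cycle $(2)\Rightarrow(3)\Rightarrow(2)$, and separately $(2)\Rightarrow(1)$ and $(1)\Rightarrow(2)$. The case $d<0$ then follows by conjugating with $\diag(1,-1)$, which reverses the degree and the sign of $Q_A$ but preserves $\omega$.

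\emph{Step 1: $(2)\Rightarrow(3)$.} Let $\mu$ be a nonzero invariant positive measure. Pair the strictly positive continuous field $P=Q_A+A^TH(\cdot+\alpha)A-H$ against $\mu$. Invariance gives $\int\tr(A^TH(x+\alpha)A\,d\mu)=\int\tr(H(y)\,d\mu(y))$, since $\int f\,d\mu\circ T_\alpha^{-1}$ is obtained from $\int A\,d\mu\,A^T$. Hence $\int\tr(Q_A\,d\mu)=\int\tr(P\,d\mu)$. The right side is positive: we have $P\ge c>0$, and $\tr\mu(\T)>0$ because $\mu$ is a nonzero positive measure.

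\emph{Step 2: $(3)\Rightarrow$ a continuous positive connection.} Consider the operator $\mathcal L H=A^TH(\cdot+\alpha)A-H$ on $C(\T,\Sym_2\R)$. Let $\mathcal C$ be the open convex cone of positive-definite continuous fields. Suppose $Q_A+\operatorname{ran}\mathcal L$ misses $\mathcal C$. Hahn--Banach gives a nonzero functional, i.e.\ a finite $\Sym_2\R$-valued measure $\mu$, that is nonnegative on $\mathcal C$ (so $\mu$ is positive), annihilates $\operatorname{ran}\mathcal L$ (which is exactly invariance), and satisfies $\int\tr(Q_A\,d\mu)\le0$. This contradicts (3). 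Since the condition in (2) is open, the continuous $H$ can then be replaced by a nearby trigonometric polynomial, which is analytic. So $(3)\Rightarrow(2)$.

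\emph{Step 3: $(2)\Rightarrow(1)$.} Given $H$, I would build an analytic gauge making an iterate monotonic. Equivalently, it suffices to show directly that $L(\alpha,A_y)$ is affine with slope $2\pi d$ for small $y>0$, which together with Proposition~\ref{prop21lww} gives $\omega=d$. The idea is the standard cone-field argument. Positivity of the connection says the quadratic form $v\mapsto v^T(Q_A+\dots)v$ is positive. Complexifying $x\mapsto x+iy$ with small $y>0$, this should make $A_y$ map the ``upper half-plane'' cone field defined by $H$ strictly into itself. The result would be a dominated splitting for $(\alpha,A_y)$ with invariant sections holomorphic in $y$ on $0<y<\epsilon$. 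That forces $L(\alpha,A_y)$ to be harmonic in $x+iy$, hence affine in $y$ there. The slope is then computed as $2\pi$ times the winding of the invariant section, which equals $d$. This is a version of Avila's signedness argument (Remark~17 in \cite{Av15}), and I would follow it.

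\emph{Step 4: $(1)\Rightarrow(2)$, the main obstacle.} By Steps 1--2 it suffices to verify (3) assuming $\omega=d$. Suppose some nonzero invariant positive $\mu$ has $\int\tr(Q_A\,d\mu)\le0$.
\begin{itemize}
\item In the case $L=0$, I would use the Avila--Krikorian theory: the cocycle is either conjugate to rotations (degree $d$ rotations have $Q>0$) or I would use a direct argument.
\item In the case $L>0$, $\mu$ must be supported on the Oseledets directions: an invariant positive measure is built from $v v^T$ along invariant lines. Its $Q$-integral is then the derivative of the angle of the unstable (or stable) direction. I would relate this via the fibered rotation number and the complexification to the right derivative of $L(\alpha,A_y)$. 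The aim is to show that $\int\tr(Q_A\,d\mu)\le 0$ forces an extra unit of acceleration, giving $\omega>d$.
\end{itemize}
This is where I expect the real work, likely requiring the transfer--scattering representation mentioned in the abstract. I would first try to establish the identity $2\pi\omega=\lim_{y\downarrow0}\partial_y L(\alpha,A_y)$ as an integral of $\tr(Q_A\,d\mu_u)$ against the normalized unstable measure.
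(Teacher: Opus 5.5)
\textbf{Verdict: there is a genuine gap in Step~4.} Your Steps~1 and~2 are the paper's Hahn--Banach argument (Lemma~\ref{lem31hbb}, Corollary~\ref{cor72scc}), and reducing $d<0$ by conjugating with $\diag(1,-1)$ works. The gap is the implication $(1)\Rightarrow(2)$. This is the real content of the proposition, and you prove it in neither regime. In the paper it follows from Theorem~\ref{thm11mnn} by averaging $H=\frac1N\sum_{j<N}\FF^j(H_B)$. The actual input is Proposition~\ref{prop25zrr} when $L(\alpha,A)=0$ and Proposition~\ref{prop32cnnc} when $L(\alpha,A)>0$.

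\textbf{The case $L=0$.} Appealing to Avila--Krikorian is circular: their conjugacy theorem assumes premonotonicity, which is exactly what is unknown here, and no ``direct argument'' is given. What is needed is rigidity at a critical point.
\begin{enumerate}
\item Minimal acceleration makes $\widetilde A\mapsto L(\alpha,\widetilde A)=L(\alpha,\widetilde A_\eta)-2\pi|d|\eta$ real analytic near $A$ (Proposition~\ref{prop23allc}). Since $L\ge0=L(\alpha,A)$, the differential vanishes at $A$.
\item Proposition~\ref{prop204staa} then conjugates $A$ to $R_{dx+\psi}$. It uses the first-variation formula and the Fourier symmetry $K_{-j}=-\overline{K_j}$, which continues the unstable projection across $\T$ and produces an invariant metric.
\item Even after this, $Q=2\pi(d+\psi')I$ need not be positive. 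You still need unique ergodicity to pass to long iterates, or an approximate cohomological equation for $H=hI$.
\end{enumerate}

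\textbf{The case $L>0$.} Your Oseledets heuristic fails. By Lemma~\ref{lem302rkk}, every invariant positive matrix measure is singular with respect to Lebesgue measure. Its rank-one direction field therefore lives on a Lebesgue-null set, where Oseledets' theorem says nothing, and a merely measurable direction has no angular derivative to integrate. Moreover, an identity tying $\omega$ to one distinguished ``unstable measure'' could not give $\int\tr(Q_A\,d\mu)>0$ for \emph{every} $\mu$ in the convex set $\mathfrak M_{\alpha,A}$, which is what (3) requires.

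The missing idea is the transfer--scattering argument of Proposition~\ref{prop32cnnc}:
\begin{enumerate}
\item For $\mu=vv^T\,d\nu$, build the unitary $W$ on $L^2(\nu)$ with $Wv=Av$, and the holomorphic unitary family $U_\mu(z)$. It satisfies $U_\mu(0)\xi=\xi$ and $\langle\xi,-iU_\mu(0)^*U_\mu'(0)\xi\rangle=\int_\T\tr(Q_A\,d\mu)$.
\item Minimal acceleration gives uniform hyperbolicity of $A^{(is)}_\eta$. This yields a bound on $(U_\mu(x+i\eta)-e^sI)^{-1}$ independent of $\mu$ (Lemmas~\ref{lem344rcc} and~\ref{lem35repp}).
\item The three-lines estimate of Lemma~\ref{lem33rsvv} converts this bound into strict positivity.
\end{enumerate}

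\textbf{Step~3 is a sound alternative route} once you make the cone explicit. Take $\Omega_{x,y}(v)=i\,v^*Jv+2y\,v^*H(x)v$ on $\C^2$; at $y=0$ this cuts out the upper half-plane, which is $\overline\D$ in Cayley coordinates. Expanding $A(x+iy)=A(x)+iyA'(x)+O(y^2)$ and using $A^TJA=J$ and $A^TJA'=-Q_A$ gives
\[
\Omega_{x+\alpha,y}\bigl(A(x+iy)v\bigr)-\Omega_{x,y}(v)=2y\,v^*\bigl(Q_A+A^T(H\circ T)A-H\bigr)(x)\,v+O(y^2)\|v\|^2 ,
\]
which is at least $cy\|v\|^2$ for small $y>0$. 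The Hermitian analogue of the criterion in Proposition~\ref{prop902ehh} then gives uniform hyperbolicity of $A_y$. The unstable section lies in a disk $O(y)$-close to $\overline\D$, so the unstable \emph{multiplier} (not the section) is homotopic to $a$, and $\omega(\alpha,A_y)=d$. Note also that it is $\log|\lambda|$, not $L$, that is harmonic. Convexity of $y\mapsto L(\alpha,A_y)$ finishes the argument.

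The paper instead integrates $H$ to a gauge, forces quarter-turn monodromy via the translation-number argument of Lemma~\ref{lem402qtt}, and computes on the fourfold cover. Your version needs only a continuous $H$ and no monodromy control. The paper's version reuses machinery it needs anyway for Theorem~\ref{thm11mnn}.
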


In nonzero degree these invariant matrix measures always exist (Proposition~\ref{prop902ehh}). The dual variational formula is given in Proposition~\ref{prop701mgg}.

As a consequence of the characterization, we obtain path connectedness of the minimal-acceleration locus
\[
\mathcal M_d(\alpha)=\bigl\{A\in C^\omega(\T,\SL(2,\R)):\deg A=d,\ \omega(\alpha,A)=|d|\bigr\}
\]
for fixed $\alpha\in\R\setminus\mathbb Q$ and $d\ne0$.

\begin{theorem}\label{thm14ptth}
For every $\alpha\in\R\setminus\mathbb Q$ and $d\in\Z\setminus\{0\}$, any $A\in\mathcal M_d(\alpha)$ can be joined to the linear rotation $x\mapsto R_{dx}$ by a path entirely contained in $\mathcal M_d(\alpha)$ that is piecewise real analytic in the path parameter and continuous in the supremum norm on a common complex strip $|\Im z|\le h$ for some $h>0$. In particular, $\mathcal M_d(\alpha)$ is path connected, as is the degree-$d$ premonotonic locus in $C^r(\T,\SL(2,\R))$ for every integer $r\ge1$ (Corollary~\ref{cor803crr}).
\end{theorem}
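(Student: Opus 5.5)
By Theorem~\ref{thm11mnn}, $\mathcal M_d(\alpha)$ is exactly the set of degree-$d$ analytic premonotonic cocycles. The plan is to build, for a given $A\in\mathcal M_d(\alpha)$, an explicit path in several stages. Along each stage we keep a strictly monotonic conjugated iterate, so premonotonicity, and hence membership in $\mathcal M_d(\alpha)$, is preserved. We treat $d>0$; the case $d<0$ follows by conjugating with $\diag(1,-1)$, which reverses the sign of $Q$ and the degree.

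\emph{Step 1: remove the gauge.} Fix $N$ and an analytic gauge $B$ with $B(x+N\alpha)A_N(x)B(x)^{-1}$ strictly positively monotonic; by Lemma~\ref{lem21evmm} this holds for all large $N$. Since $\SL(2,\R)$ retracts onto $\SO(2,\R)$, write $B=R_{kx+\phi(x)}P(x)$ with $P$ analytic positive definite, and deform $B$ to $R_{kx}$ through $B_t$. Conjugation $A\mapsto B_t(\cdot+\alpha)AB_t^{-1}$ preserves premonotonicity for every $t$, because premonotonicity is defined up to gauge. This moves $A$ within $\mathcal M_d(\alpha)$ to a cocycle conjugate to $A$ by a rotation. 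I will choose the gauge deformation more cleverly instead: use the connection $H$ from Proposition~\ref{prop103crr}(2), analytic with $Q_A+A^TH(\cdot+\alpha)A-H>0$. Then deform $A$ by $A_t$ while updating $H$ continuously, since the defining inequality is open in $C^1$. The cleaner route is the following.

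\emph{Step 2: homotopy through connections.} Let $A_t=A\,R_{t\psi}$ or a similar perturbation. The key observation is that condition~\eqref{eq104psii} is convex in $H$ for fixed $A$ and open in $A$. Consider the straight path $A_t=(1-t)$-interpolation in the Lie algebra: write $A=R_{dx}\,e^{S(x)}$ after a deformation that is a gauge change, and set $A_t=R_{dx}e^{(1-t)S}$. I would try to show that $H_t=(1-t)H$ (or $H$ conjugated accordingly) remains a positive connection. Expanding $Q_{A_t}$ involves $Q_{R_{dx}}=2\pi d\,I>0$ plus terms from $S$ scaled by $(1-t)$, and positivity at $t=0$ and $t=1$ does not by itself give positivity in between, because the inequality is not jointly convex in $(A,H)$. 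I expect this step to be the main obstacle.

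The fallback is local. Premonotonicity is $C^1$-open with a locally uniform strip, and it is Theorem~\ref{thm11mnn} plus Proposition~\ref{prop103crr} that make $\mathcal M_d(\alpha)$ open in each analytic strip topology. So it suffices to show that the set of cocycles joinable to $R_{dx}$ is also closed in the connected component of a suitable natural parameter space. Concretely, I would use the Avila--Krikorian analytic dependence: along the analytic family $A_\lambda=R_{\lambda\varphi}A$ ($\lambda\in[0,1]$, with $\varphi$ chosen to increase monotonicity), $Q$ only increases. The term $\lambda\cdot\varphi'$ contributes positively, so premonotonicity persists and the family can be driven toward large rotation dominance. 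Then for a large rescaling, $A\approx R_{dx+\text{large}}$, we shrink the hyperbolic part $e^{S}$ to the identity while the rotational speed dominates the $S$-terms in $Q$. Finally we undo the added rotation inside $\SO(2,\R)$-valued degree-$d$ cocycles, which are all premonotonic and connected by linear interpolation of the lift of the angle.

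The main technical point is the middle stage. There we must make the positivity margin from the added rotation uniformly exceed the perturbation terms of the iterate, and this requires working with $N$-step iterates rather than one step, since rotations do not commute with $A$. I would handle it by choosing the added rotation as the gauge $R_{\lambda x}$, with integer-valued winding adjustments compensated at the end. Piecewise analyticity and the common strip $h$ follow because each stage is an explicit analytic family.
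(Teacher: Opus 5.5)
Your outer stages are sound. Deforming a 1-periodic gauge keeps you in $\mathcal M_d(\alpha)$ by conjugacy invariance of degree and acceleration. The final homotopy of the angle inside $\SO(2,\R)$-valued degree-$d$ cocycles is exactly right. Conjugating by $\diag(1,-1)$ is a valid reduction to $d>0$.

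\textbf{The gap.} The middle stage, which you flag and never resolve, is the whole difficulty. Nothing in the proposal moves a premonotonic cocycle to an $\SO(2,\R)$-valued one inside $\mathcal M_d(\alpha)$, and each fallback fails:
\begin{enumerate}
\item The family $R_{\lambda\varphi}A$ with $\varphi'>0$ is not admissible. A 1-periodic $\varphi$ has $\int_\T\varphi'\,dx=0$. If instead $\varphi(x+1)=\varphi(x)+m$, then $R_{\lambda\varphi}$ is a loop only when $\lambda m\in\Z$, and then the degree jumps to $d+\lambda m$.
\item The gauge $R_{\lambda x}$ has the same monodromy defect: $R_{\lambda(x+\alpha)}A(x)R_{-\lambda x}$ is 1-periodic in general only for $\lambda\in\frac12\Z$.
\item Where $Q_{R_{\lambda\varphi}A}=Q_A+2\pi\lambda\varphi'A^TA$ does increase, this gives no control on the conjugated iterate. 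By \eqref{eq21pqq}, its $Q$ is a sum of pullbacks by the \emph{perturbed} partial products, and your gauge $B$ was adapted to $A$, not to the perturbation.
\item Working with $N$-step iterates cannot help: a path of iterates does not lift to a path of one-step cocycles over $\alpha$.
\item The open--closed argument is circular. $\mathcal M_d(\alpha)$ is open, but you never identify a connected ambient space in which the set of cocycles joinable to $R_{dx}$ is closed. Inside $\mathcal M_d(\alpha)$ itself, open-and-closed only says that set is a path component, which is what you must show is everything.
\end{enumerate}

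\textbf{What the paper uses instead.} The missing ingredients are a \emph{one-step} monotonic model whose monodromy lies in $\SO(2,\R)$, and a contraction onto $\SO(2,\R)$ that preserves monotonicity and respects that monodromy.
\begin{enumerate}
\item The paper integrates the positive connection $H$ of Proposition~\ref{prop103crr} via $B'=BJH$ (Lemma~\ref{lem41gga}). This makes $C=(B_0\circ T)AB_0^{-1}$ strictly positively monotonic in one step, but only on $\R$, with $B_0(x+1)=MB_0(x)$.
\item Interpolating between $H$ and $\FF(H)$ through positive connections shifts the translation number of the monodromy by $d\ge1$. So $M$ can be chosen to be the quarter turn $R_{-1/4}$ (Lemma~\ref{lem402qtt}), which gives $C(x+1)=MC(x)M^{-1}$.
\item Lemma~\ref{lem81cttr} supplies an explicit retraction $\mathcal P_t$ of $\SL(2,\R)$ onto $\SO(2,\R)$. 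It preserves strict monotonicity of curves; the key computation is that the threshold $F(t)$ is nondecreasing in $t$. It is also equivariant under conjugation by $\SO(2,\R)$.
\item Equivariance under $M$ makes $B_0(\cdot+\alpha)^{-1}\mathcal P_t(C)B_0$ genuinely 1-periodic. The covering identities then show it has acceleration $d$.
\item Only at this point does your gauge-deformation idea become effective. At $t=0$ the map $K=R_{x/4}B_0$ is 1-periodic, and $\mathcal P_u(K)$ contracts it to a rotation through conjugacies of an $\SO(2,\R)$-valued cocycle. Your angle homotopy then finishes the path.
\end{enumerate}
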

Here the width $h>0$ of the common strip may be smaller than the initial analyticity width of $A$.

Related deformation results include approximation by linear rotations for cocycles already conjugate to rotations~\cite[Lemma~3.10]{AK15}, connectedness and accessibility at the $C^0$ level~\cite[Theorem~4 and Appendix~A]{ABD12}, and homotopies of positive symplectic loops~\cite{Sli01,WZ24}.

\subsection*{Main ideas of the proof}
An elementary model is given by the finite Fourier case in Appendix~\ref{app10frr}. Suppose that $A$ has Fourier support in $[-m,m]$ with $m\ge1$ and $\deg A=m$. For each $v\in\R^2\setminus\{0\}$, set $f_v(x)=(A(x)v)_1+i(A(x)v)_2$ and $z=e^{2\pi ix}$. The argument principle shows that the polynomial $z^m f_v(x)$ has exactly $2m$ zeros $a_j$, counted with multiplicity, all in the unit disk $\D=\{\zeta\in\C:|\zeta|<1\}$, and
\[
\frac{d}{dx}\arg f_v(x)
=\pi\sum_{j=1}^{2m}\frac{1-|a_j|^2}{|e^{2\pi ix}-a_j|^2}>0.
\]
Each summand is a positive Poisson kernel, so $A$ itself is strictly positively monotonic.

For general analytic cocycles, the proof obtains angular positivity from contraction in a complex strip. Let $\eps>0$, and let $g$ be 1-periodic and holomorphic on a neighborhood of the closed strip $0\le\Im z\le\eps$, with $|g(x)|=1$ and $|g(x+i\eps)|\le r$ for some $0<r<1$. The minimum principle applied to the superharmonic function $-\log|g|$, with value $+\infty$ at zeros, and the Cauchy--Riemann equations give
\[
\begin{aligned}
&-\log|g(x+iy)|\ge\frac{y}{\eps}\log\frac1r,~\forall~0\le y\le\eps,\\
&\partial_x\arg g(x)
=-\left.\partial_y\log|g(x+iy)|\right|_{y=0}
\ge\frac1\eps\log\frac1r>0.
\end{aligned}
\]
Thus uniform inward motion after complexification forces uniformly positive angular motion on the real circle. Lemma~\ref{lem601ohh} applies this principle to the projective action of long transfer products in suitable Cayley coordinates.

For $d>0$, $L(\alpha,A)>0$, and $\omega(\alpha,A)=d$, we construct a 1-periodic analytic gauge in which, at a positive height, the complex unstable direction lies in $\D$ and the stable direction outside $\overline\D$. First, we obtain a positive connection satisfying \eqref{eq104psii}. Hahn--Banach separation reduces this to positivity of the pairing against invariant positive matrix measures, which are singular and have rank-one density. The transfer--scattering correspondence \cite{MSB13} realizes the pairing as a boundary phase derivative in a covariant unitary representation. An exponential dichotomy gives a resolvent bound independent of the representation, and the three-lines theorem makes the derivative strictly positive. Integrating this connection gives a strictly monotonic cocycle on the universal cover.

Second, we adjust the monodromy of this moving frame. The integrated gauge need not be periodic: after one circuit of the base circle, it may return multiplied by a fixed matrix. Avila and Krikorian discussed premonotonicity in terms of transverse projective foliations and holonomy in their early manuscript~\cite[Appendix~F]{AKdraft}. Passing from the local interpretation of signedness \cite[Remark~17]{Av15} to a periodic frame requires control of this monodromy; related holonomy issues appear in \cite[Remark~4.11]{LXZ26}. Interpolating between positive connections lets us choose the monodromy to be a quarter-turn. Third, we cancel that turn with a holomorphic rotation built from a Blaschke factor. Its modulus is chosen to preserve the separation of the unstable and stable directions by $\partial\D$ at a positive height, while the corrected gauge becomes 1-periodic.

In these periodic coordinates, minimal acceleration makes the winding numbers of a diagonal entry of sufficiently long transfer products agree on the real and upper circles. The argument principle excludes zeros of that entry between them. Uniform hyperbolicity and the separation of the invariant directions then give contraction of the projective disk at the upper boundary. The scalar boundary estimate above gives a positive derivative of the Cayley angle. Since this angle is twice the real projective angle, sufficiently long iterates are strictly positively monotonic on the real circle.

When $L(\alpha,A)=0$, minimal acceleration makes the Lyapunov exponent locally analytic as a function of the cocycle. Nonnegativity then makes $A$ a stationary point. The rigidity argument of \cite[Remark~13]{Av15} gives an analytic invariant metric, from which unique ergodicity yields premonotonicity. For path connectedness, we construct an $\SO(2,\R)$-equivariant contraction preserving strict monotonicity and the quarter-turn symmetry. It therefore descends to the original circle without changing the frequency. Negative degree follows by reflection.

\subsection*{Organization of the paper}
Section~\ref{sec2prr} proves the degree bound and treats zero Lyapunov exponent; Sections~\ref{sec301cnn}--\ref{sec6fnn} prove the positive-exponent characterization. Sections~\ref{sec70vrr}--\ref{sec9mchh} give the variational formula, path connectedness, and the degree-zero comparison. Appendix~\ref{app10frr} treats finite Fourier extremality.

\section{Complexification, degree lower bound, and zero Lyapunov exponent}\label{sec2prr}

Throughout the paper, we extend real-analytic maps on $\T$ holomorphically to strips $|\Im z|<h$, shrinking $h>0$ as needed. The extension of an $\SL(2,\R)$-valued map takes values in $\SL(2,\C)$. For a fixed frequency, we abbreviate the base rotation $T_\alpha$ to $T$.

\subsection{Persistence of monotonicity under iteration}

We first record the elementary fact that strict monotonicity of one iterate implies strict monotonicity of all sufficiently long iterates.

\begin{lemma}\label{lem21evmm}
Let $\alpha\in\R$, $C\in C^1(\T,\SL(2,\R))$, and $\sigma\in\{1,-1\}$.
If $\sigma Q_{C_m}>0$ on $\T$ for some $m\ge1$, then $\sigma Q_{C_n}>0$ on $\T$ for all sufficiently large $n$. In particular, for $A\in C^1(\T,\SL(2,\R))$, if a 1-periodic gauge $B\in C^1(\T,\SL(2,\R))$ makes $(B\circ T^m)A_mB^{-1}$ strictly monotonic for some $m\ge1$, then $(B\circ T^n)A_nB^{-1}$ is strictly monotonic for all large $n$.
\end{lemma}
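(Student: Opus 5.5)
The plan rests on the cocycle composition rule for $Q$ together with the observation that monotonicity is a cone condition preserved under congruence.

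\emph{Composition rule.} For $C,D\in C^1$ with values in $\SL(2,\R)$,
\[
Q_{DC}=-(DC)^TJ(D'C+DC')=C^TQ_DC+Q_C ,
\]
where we used $D^TJD=J$ (valid since $\det D=1$). Iterating gives
\[
Q_{C_n}(x)=\sum_{k=0}^{n-1}C_k(x)^T\,Q_C(x+k\alpha)\,C_k(x),\qquad C_0=I,
\]
and, more generally,
\[
Q_{C_{a+b}}(x)=C_a(x)^TQ_{C_b}(x+a\alpha)C_a(x)+Q_{C_a}(x).
\]

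\emph{Reduction to a finite range of remainders.} Take $\sigma=1$ and write $n=qm+r$ with $0\le r<m$. Since $C_n=C_r(\cdot+qm\alpha)\,C_{qm}$, we get
\[
Q_{C_n}(x)=Q_{C_{qm}}(x)+C_{qm}(x)^T\,Q_{C_r}(x+qm\alpha)\,C_{qm}(x).
\]
Applying the composition rule to the $m$-step blocks,
\[
Q_{C_{qm}}(x)=\sum_{j=0}^{q-1}C_{jm}(x)^T\,Q_{C_m}(x+jm\alpha)\,C_{jm}(x).
\]
By compactness, $Q_{C_m}\ge\delta I$ for some $\delta>0$, and $|Q_{C_r}|\le K I$ uniformly for $r<m$. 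Each summand is at least $\delta\,C_{jm}^TC_{jm}$. The error term is bounded below by $-K\,C_{qm}^TC_{qm}$. It therefore suffices to show that for large $q$,
\[
\delta\sum_{j<q}C_{jm}^TC_{jm}>K\,C_{qm}^TC_{qm}.
\]

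\emph{Main obstacle.} The difficulty is that $C_{qm}$ may grow exponentially, so the last term can dominate the sum. The way out is to absorb it by using the last block. Write $C_{qm}=C_m(\cdot+(q-1)m\alpha)\,C_{(q-1)m}$ and set $M=\sup\|C_m^{-1}\|$, finite by continuity. Then, for every $v$,
\[
\|C_{qm}v\|^2\le M^2\|C_{(q-1)m}v\|^2,
\]
which bounds the error by $KM^2\,C_{(q-1)m}^TC_{(q-1)m}$, a quantity comparable to a single summand. That alone is not enough, since $\delta$ need not exceed $KM^2$.

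The fix is to apply the decomposition with $m$ replaced by a multiple $pm$ whose monotonicity constant is large. For any vector $v$,
\[
v^TQ_{C_{pm}}v\ge\delta\sum_{j<p}\|C_{jm}v\|^2.
\]
Because $\|C_{jm}v\|\ge\|C_{(p-1)m}v\|/M^{\,p-1-j}$, this sum is not directly comparable to $\|C_{pm}v\|^2$ either.

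The cleaner route is to avoid blocks entirely and compare directly. For $n\ge m$, write $n=qm+r$ as before. The tail term satisfies
\[
v^TC_{qm}^TQ_{C_r}C_{qm}v\ge -K\|C_{qm}v\|^2\ge -KM^2\|C_{(q-1)m}v\|^2 .
\]
This is the same comparison as above, so we must still show that the positive sum beats the constant multiple $KM^2$ of its last term.

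To achieve this, regroup so that the tail is absorbed into the final $m$-block. Set
\[
Q_{C_n}=Q_{C_{(q-1)m}}+C_{(q-1)m}^T\,Q_{C_{m+r}}(\cdot+(q-1)m\alpha)\,C_{(q-1)m}.
\]
It remains to know that $Q_{C_{m+r}}>0$ for all $0\le r<m$, i.e. we need positivity for all $n\in[m,2m)$, which is the original problem again.

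\emph{Final strategy: projective compactness.} Instead, argue projectively. Set $u=C_{qm}v/\|C_{qm}v\|$. Then
\[
\frac{v^TQ_{C_n}v}{\|C_{qm}v\|^2}\ge\frac{v^TQ_{C_{qm}}v}{\|C_{qm}v\|^2}-K.
\]
By the angle formula \eqref{eq13angg}, the ratio $v^TQ_{C_{qm}}v/\|C_{qm}v\|^2$ is the angular speed of $C_{qm}v$. Using the first composition identity with the last block, this speed equals
\[
\frac{u'^TQ_{C_m}u'}{|u'|^2}+(\text{speed of }C_{(q-1)m}v)\cdot(\text{projective derivative}),
\]
where the projective derivative is positive and all terms are positive. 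An induction therefore shows the speed is at least the sum of the last $\ell$ block speeds, each pushed forward through at most $\ell$ blocks. Each such term is bounded below by $\delta\,\|C_m\|_\infty^{-2\ell}$-type constants. This still does not grow.

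The decisive step is therefore the plain estimate $\|C_{qm}v\|^2\le M^2\|C_{(q-1)m}v\|^2$ combined with the $q$-term sum. I expect to need uniform growth control here, and this is the hardest point. I would try first to use the fact that when the speed is $\ge\delta$ on the circle, the monotone structure forces the per-block speeds to add up to a quantity growing in $q$ relative to the last block. Since the paper claims the result as elementary, perhaps the intended argument only uses positivity of the first term plus compactness in $r<m$.

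The case $\sigma=-1$ follows by conjugating with $\diag(1,-1)$. The gauge statement follows by applying the scalar case to $C=(B\circ T)AB^{-1}$, whose iterates are $C_n=(B\circ T^n)A_nB^{-1}$.
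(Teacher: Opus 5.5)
Your proposal does not reach a proof. After reducing to the inequality $\delta\sum_{j<q}C_{jm}^TC_{jm}>K\,C_{qm}^TC_{qm}$ for large $q$, each route you try either becomes circular or ends with the admission that ``uniform growth control'' is missing. In fact this reduction cannot be completed, because the target inequality is false in general: the bound $-K\|C_{qm}(x)v\|^2$ for the tail term is too crude.

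To see this, let $\alpha$ be irrational with $L(\alpha,C)>0$. Take a.e.\ $x$, a vector $v$ off the Oseledets stable direction, and any $L'<L$. At each of the infinitely many record times $q$ of $\log\|C_{qm}(x)v\|^2-2L'mq$ one has $\|C_{qm}(x)v\|^2>(e^{2L'm}-1)\sum_{j<q}\|C_{jm}(x)v\|^2$. So your inequality fails infinitely often once $e^{2Lm}-1>\delta/K$. This happens, for example, for $C(x)=R_x\diag(\lambda,\lambda^{-1})$ with $\lambda\ge2$ and $m=2$. There $Q_C=2\pi\diag(\lambda^2,\lambda^{-2})>0$, so every $Q_{C_n}>0$. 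Yet $\delta/K\le\lambda^{-4}+\lambda^{-6}<1<e^{4L}-1$, by Herman's bound $L\ge\log\frac{\lambda+\lambda^{-1}}{2}$.

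The missing idea is to use the monotonicity in $j$ that your own composition rule gives when the $m$-block is appended \emph{last}, and then to argue by contradiction pointwise.

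\begin{itemize}
\item[(i)] Fix $x$, a unit vector $v$, and $0\le r<m$. Set $s_j=\sigma v^TQ_{C_{jm+r}}(x)v$ and $S_j=\sum_{\ell<j}\|C_{\ell m}(x)v\|^2$.
\item[(ii)] Writing $C_{(j+1)m+r}=C_m(\cdot+(jm+r)\alpha)\,C_{jm+r}$ gives $s_{j+1}-s_j\ge\delta\|C_{jm+r}(x)v\|^2\ge c\|C_{jm}(x)v\|^2$, with $c=\delta\min_{r<m}\|C_r^{-1}\|_\infty^{-2}$. So $(s_j)$ is strictly increasing.
\item[(iii)] Your block decomposition gives $s_j\ge\delta S_j-K\|C_{jm}(x)v\|^2$.
\item[(iv)] Assume $K>0$ (the case $K=0$ is immediate) and suppose $s_k\le0$. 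Then $s_j\le0$ for all $j\le k$.
\item[(v)] Summing the increments gives $cS_k\le s_k-s_0\le K$.
\item[(vi)] On the other hand, $s_j\le0$ forces $S_{j+1}\ge(1+\delta/K)S_j$ for $1\le j<k$, hence $S_k\ge(1+\delta/K)^{k-1}$.
\end{itemize}

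The bounds in (v) and (vi) are incompatible once $k$ exceeds a threshold depending only on $\delta,K,c$, uniformly in $(x,v,r)$. So exponential growth is not the obstacle; it is what closes the argument. While $s_j\le0$, the tail must keep dominating, which forces geometric growth of $S_j$. But the increments are controlled by the same norms, and their total is at most $K$.

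Your composition rule, the block lower bound, the reduction of $\sigma=-1$ via conjugation by $\diag(1,-1)$, and the deduction of the gauge statement are all fine.
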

\begin{proof}
Set $C_0=I$. Differentiating $C_{a+b}(x)=C_b(x+a\alpha)C_a(x)$ and using $C_b^T J C_b=J$ gives the cocycle identity
\begin{equation}\label{eq21pqq}
Q_{C_{a+b}}(x)=Q_{C_a}(x)+C_a(x)^TQ_{C_b}(x+a\alpha)C_a(x).
\end{equation}
Choose $\delta>0$ such that $\sigma Q_{C_m}\ge\delta I$ on $\T$, and set
\[
K=\max_{0\le r<m}\|Q_{C_r}\|_\infty,\qquad
c=\delta\min_{0\le r<m}\|C_r^{-1}\|_\infty^{-2}>0.
\]
Fix $x\in\T$, a unit vector $v\in\R^2$, and $0\le r<m$. Applying \eqref{eq21pqq} to the decompositions $C_{jm+r}(x)=C_r(x+jm\alpha)C_{jm}(x)$ and $C_{(j+1)m+r}(x)=C_m(x+(jm+r)\alpha)C_{jm+r}(x)$ yields, for every $j\ge0$,
\[
\sigma v^TQ_{C_{jm+r}}(x)v \ge \delta\sum_{\ell=0}^{j-1}\|C_{\ell m}(x)v\|^2 - K\|C_{jm}(x)v\|^2
\]
and
\[
\sigma v^TQ_{C_{(j+1)m+r}}(x)v - \sigma v^TQ_{C_{jm+r}}(x)v
\ge \delta\|C_{jm+r}(x)v\|^2 \ge c\|C_{jm}(x)v\|^2.
\]
If $K=0$, the first inequality immediately gives $\sigma Q_{C_n}>0$ for all $n\ge m$. Suppose $K>0$ and $\sigma v^TQ_{C_{km+r}}(x)v\le0$ for some $k\ge1$. Since $j\mapsto\sigma v^TQ_{C_{jm+r}}(x)v$ is strictly increasing, it is nonpositive for all $0\le j\le k$. Summing the second inequality over $0\le j<k$ gives
\[
c\sum_{j=0}^{k-1}\|C_{jm}(x)v\|^2 \le \sigma v^TQ_{C_{km+r}}(x)v - \sigma v^TQ_{C_r}(x)v \le K.
\]
On the other hand, nonpositivity in the first inequality implies
\[
\sum_{\ell=0}^j\|C_{\ell m}(x)v\|^2\ge\left(1+\frac{\delta}{K}\right)\sum_{\ell=0}^{j-1}\|C_{\ell m}(x)v\|^2, \quad 1\le j<k,
\]
which by induction from $\|C_0(x)v\|^2=1$ gives $\sum_{j=0}^{k-1}\|C_{jm}(x)v\|^2\ge(1+\delta/K)^{k-1}$. These two bounds are incompatible once $k$ exceeds a threshold depending only on $\delta,K,c$. The final assertion follows from $C_n=(B\circ T^n)A_nB^{-1}$ when $C=(B\circ T)AB^{-1}$.
\end{proof}

\subsection{Cayley coordinates and the degree lower bound}

To pass between $\SL(2,\R)$ and $\SU(1,1)$, we conjugate by the unitary matrix
\begin{equation}\label{eq202cyy}
\Q=\frac1{\sqrt2}\begin{pmatrix}1&-i\\1&i\end{pmatrix},\qquad
\widehat A=\Q A\Q^*=\begin{pmatrix}a&b\\c&e\end{pmatrix}.
\end{equation}
We use the same Cayley transform $\widehat D=\Q D\Q^*$ for any $2\times2$ matrix-valued map $D$, and denote the entries of $\widehat D_n$ by $a_n,b_n,c_n,e_n$ in the order of~\eqref{eq202cyy}. For $x\in\T$, the matrix $\widehat A(x)$ belongs to $\SU(1,1)$, so that $c=\bar b$, $e=\bar a$, and $|a|^2-|b|^2=1$. Writing $\zeta=w/u$ for the projective coordinate of $(u,w)^T\in\C^2$, the induced M\"obius action on $\widehat\C=\C\mathbb P^1$ is
\begin{equation}\label{eq23mbb}
\widehat A\cdot\zeta=\frac{c+e\zeta}{a+b\zeta},
\end{equation}
and the Cayley coordinates identify the real projective line $\R\mathbb P^1$ with the unit circle $\partial\D$. Under this identification, $R_\theta$ acts by multiplication by $e^{4\pi i\theta}$, so a loop $A:\T\to\SL(2,\R)$ of degree $d$ has projective degree $2d$. Since $|a(x)|^2=1+|b(x)|^2\ge1$ on $\T$, $a$ is nonvanishing on $\T$ and homotopy to $R_{dx}$ gives
\[
\wind a=-d,
\]
where $\wind g=(2\pi i)^{-1}\int_\T(g'/g)\,dx$ denotes the winding number of a nonvanishing $C^1$ loop $g:\T\to\C^\times$.

\begin{proposition}\label{prop21lww}
For every real-analytic cocycle $(\alpha,A)$ over an irrational rotation $\alpha\in\R\setminus\mathbb Q$,
\[
L(\alpha,A_y)\ge L(\alpha,A)+2\pi|\deg A|y \quad (0\le y<h), \qquad \omega(\alpha,A)\ge|\deg A|.
\]
\end{proposition}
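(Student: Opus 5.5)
Without loss of generality $d=\deg A\ge0$; otherwise conjugate by $\diag(1,-1)$, which reverses degree and preserves Lyapunov exponents of all complexifications. The second inequality follows from the first: $\omega(\alpha,A)$ is the right derivative of $y\mapsto L(\alpha,A_y)$ at $0$ divided by $2\pi$. So the task is to show $L(\alpha,A_y)\ge L(\alpha,A)+2\pi dy$ for $0\le y<h$.

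The plan uses the Cayley coordinates \eqref{eq202cyy} and the upper-left entry $a_n$ of $\widehat A_n$. On the real circle $\widehat A_n(x)\in\SU(1,1)$, so $|a_n|^2=1+|b_n|^2$. This gives $\|A_n(x)\|=\|\widehat A_n(x)\|=|a_n(x)|+|b_n(x)|$, which lies between $|a_n(x)|$ and $2|a_n(x)|$. Hence
\[
L(\alpha,A)=\lim_n \frac1n\int_\T\log|a_n(x)|\,dx .
\]
Since $\widehat A_n$ is a product of $n$ loops of degree $d$, it has degree $nd$, and as in the computation of $\wind a$ we get $\wind a_n=-nd$. For the complexified cocycle, $\|\widehat A_n(x+iy)\|\ge|a_n(x+iy)|$, so
\[
L(\alpha,A_y)\ge\limsup_n\frac1n\int_\T\log|a_n(x+iy)|\,dx .
\]

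The key step is a Jensen-type lower bound for $I_n(y)=\int_\T\log|a_n(x+iy)|\,dx$. The function $a_n$ is holomorphic and 1-periodic on the strip, so $I_n$ is convex in $y$. Its derivative is governed by winding numbers: for $y$ where $a_n$ has no zeros on the line $\Im z=y$,
\[
I_n'(y)=-2\pi\,\wind\bigl(a_n(\cdot+iy)\bigr).
\]
This comes from $\partial_y\log|f|=-\partial_x\arg f$ via Cauchy--Riemann and integration in $x$. At $y=0$ the winding number is $-nd$, so the one-sided derivative at $0$ equals $2\pi nd$. Convexity gives $I_n'(y)\ge 2\pi nd$ a.e.\ for $y\ge0$. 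Concretely, zeros crossed while increasing $y$ can only decrease the winding number, by the argument principle on the rectangle $[0,1]\times[0,y]$. Hence
\[
I_n(y)\ge I_n(0)+2\pi nd\,y .
\]
Dividing by $n$ and letting $n\to\infty$ yields $L(\alpha,A_y)\ge L(\alpha,A)+2\pi dy$.

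I expect the main care point to be the sign and normalisation in the derivative formula, together with the convexity/argument-principle step when $a_n$ has zeros in the strip. If zeros cause trouble, the fallback is the rectangle argument principle: the number of zeros of $a_n$ in $0<\Im z<y$ equals $\wind a_n(\cdot)-\wind a_n(\cdot+iy)\ge0$, and the Jensen formula for the strip then gives the inequality directly. Irrationality of $\alpha$ is not needed for this bound.
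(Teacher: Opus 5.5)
Your proposal is correct and follows essentially the same route as the paper: convexity of the horizontal mean of $\log|a_n|$, the Cauchy--Riemann computation giving slope $-2\pi\wind a_n=2\pi nd$ at $y=0$ on a zero-free collar, and the comparison $|a_n|\le\|A_n\|\le 2|a_n|$ on $\T$ together with $|a_n|\le\|A_n\|$ off the real circle. The only difference is cosmetic: you reduce $d<0$ to $d>0$ by conjugating with the constant orthogonal matrix $\diag(1,-1)$, whereas the paper works directly with the lower-right Cayley entry $e_n$.
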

\begin{proof}
Assume first that $d=\deg A\ge0$, and let $a_n(z)$ be the upper-left entry of $\widehat A_n(z)$. Since $z\mapsto\log|a_n(z)|$ is subharmonic and 1-periodic on $|\Im z|<h$, its horizontal mean
\[
u_n(y)=\int_\T\log|a_n(x+iy)|\,dx
\]
is convex on the full interval $(-h,h)$. On $\T$, $a_n$ is nonvanishing with $\wind a_n=-nd$. By compactness of $\T$, $a_n$ has a zero-free collar $|y|<\delta_n$ on which $\log|a_n|$ is harmonic and the Cauchy--Riemann equations give
\[
\left.\partial_y\log|a_n(x+iy)|\right|_{y=0}=-\partial_x\arg a_n(x),\qquad
u_n'(0)=-2\pi\wind a_n=2\pi nd.
\]
Convexity of $u_n$ on $(-h,h)$ therefore implies $u_n(y)\ge u_n(0)+2\pi ndy$ for all $y\in[0,h)$. On $\T$, the $\SU(1,1)$ relation $|a_n|^2-|b_n|^2=1$ gives
\[
|a_n(x)|\le\|A_n(x)\|=|a_n(x)|+|b_n(x)|\le2|a_n(x)|,
\]
while for $y\in(0,h)$ we always have $|a_n(x+iy)|\le\|A_n(x+iy)\|$. Dividing $u_n(y)\ge u_n(0)+2\pi ndy$ by $n$ and letting $n\to\infty$ yields $L(\alpha,A_y)\ge L(\alpha,A)+2\pi dy$. The case $d<0$ follows symmetrically from the lower-right entry $e_n$ (for which $\wind e_n=nd$), and differentiating at $y=0^+$ gives $\omega(\alpha,A)\ge|d|$.
\end{proof}

\subsection{Uniform hyperbolicity and disk contraction}\label{sec23hyp}

Recall that a complex cocycle $(\alpha,D)$ with $D\in C^\omega(\T,\SL(2,\C))$ is uniformly hyperbolic if $\C^2$ splits into continuous $D$-invariant line bundles $E^u\oplus E^s$ exhibiting uniform exponential expansion and contraction. For analytic $D$, these invariant bundles are analytic. Approximating continuous nonvanishing sections of $E^u$ and $E^s$ on $\T$ by trigonometric polynomials, projecting back onto $E^u,E^s$, and normalizing the determinant yields a holomorphic 1-periodic frame $V:\T\to\SL(2,\C)$ on a thinner strip that diagonalizes $D$. If $\lambda:\T\to\C^\times$ denotes the resulting unstable multiplier, then by~\cite{Av15},
\begin{equation}\label{eq206wdll}
L(\alpha,D)=\int_\T\log|\lambda(x)|\,dx,
\qquad \omega(\alpha,D)=-\wind\lambda.
\end{equation}
For arbitrary analytic cocycles, both $L$ and $\omega$ are invariant under holomorphic 1-periodic $\SL(2,\C)$ conjugacies and satisfy the iteration identities
\begin{equation}\label{eq27itrr}
L(n\alpha,D_n)=nL(\alpha,D),\qquad \omega(n\alpha,D_n)=n\omega(\alpha,D), \quad n\ge1.
\end{equation}

For the 4-periodic gauges used below, we fix the covering normalization. Let $A:\T\to\SL(2,\R)$ and $B_0:\R/4\Z\to\SL(2,\R)$ be real analytic, and set $C=(B_0\circ T)AB_0^{-1}$. Write $L^{(4)}(\alpha,C_y)$ for the Lyapunov exponent on $\R/4\Z$ computed with normalized Haar measure $dx/4$. Boundedness of $B_0^{\pm1}$ on smaller closed strips gives $L^{(4)}(\alpha,C_y)=L(\alpha,A_y)$. For $\widetilde C(u)=C(4u)$, changing variables $x=4u$ yields, for all sufficiently small $|y|$,
\[
\begin{aligned}
L(\alpha/4,\widetilde C_y)&=L^{(4)}(\alpha,C_{4y})=L(\alpha,A_{4y}),\\
\omega(\alpha/4,\widetilde C)&=4\omega(\alpha,A).
\end{aligned}
\]
The 1-periodic gauge $B_0(4\cdot)$ conjugates $A(4\cdot)$ to $\widetilde C$, so $\deg\widetilde C=4\deg A$.

Suppose now that $A\in C^\omega(\T,\SL(2,\R))$ has $d=\deg A>0$ and attains minimal acceleration $\omega(\alpha,A)=d$. By Avila's quantization theorem \cite[Theorem~5]{Av15}, the right derivative of the convex function $y\mapsto L(\alpha,A_y)$ lies in $2\pi\Z$ at every $y$. Since the right derivative of a convex function is nondecreasing and right-continuous, there exists $\eta\in(0,h/2)$ such that $y\mapsto L(\alpha,A_y)$ is affine on $[0,2\eta]$. Thus
\begin{equation}\label{eq208aff}
L(\alpha,A_y)=L(\alpha,A)+2\pi dy,\qquad 0\le y\le\eta.
\end{equation}
For every $y\in(0,\eta]$, this implies $L(\alpha,A_y)>0$ and that $t\mapsto L(\alpha,A_{y+t})$ is affine on a neighborhood of $t=0$. Avila's regularity theorem \cite[Theorem~6]{Av15} therefore applies to $A_y$, showing that $(\alpha,A_y)$ is uniformly hyperbolic for every $y\in(0,\eta]$, regardless of whether $L(\alpha,A)$ is zero or positive.

One standard way to obtain uniform hyperbolicity is strict disk contraction in Cayley coordinates: if a holomorphic family $C$ maps $\overline\D$ strictly into $\D$ under the M\"obius action~\eqref{eq23mbb}, uniform contraction of the Poincar\'e metric on $\D$ produces holomorphic invariant unstable and stable sections $m_u:\T\to\D$ and $m_s:\T\to\widehat\C\setminus\overline\D$, making the cocycle uniformly hyperbolic. For strictly positively monotonic real cocycles, this contraction holds at every sufficiently small positive height: if $Q_C>0$ on $\T$, then for any $|\zeta|=1$, writing $\widehat C(x)\cdot\zeta=e^{i\vartheta(x)}$ locally gives $\vartheta'(x)>0$ by \eqref{eq13angg}, so the Cauchy--Riemann equations yield
\[
\left.\partial_y\log|\widehat C(x+iy)\cdot\zeta|\right|_{y=0}=-\vartheta'(x)<0.
\]
Writing the upper row of $\widehat C$ as $(a_C,b_C)$, the $\SU(1,1)$ relations and compactness give $|a_C(z)|>|b_C(z)|$ on a sufficiently thin strip. Thus $a_C(z)+b_C(z)\zeta\ne0$ for $\zeta\in\overline\D$. Compactness makes the inward motion of the boundary uniform, and the maximum principle in $\zeta$ then shows that $\widehat C(x+iy)$ maps $\overline\D$ strictly into $\D$ for all sufficiently small $y>0$.

\subsection{The zero-exponent case}\label{sec24zro}

We now prove the forward implication in Theorem~\ref{thm11mnn} when $L(\alpha,A)=0$. Since the Lyapunov exponent is nonnegative, local analyticity makes a zero-exponent cocycle a stationary point. Fix a strip width $h>0$ smaller than the analyticity width of $A$, and work in the real Banach manifold $\mathcal B_h$ of holomorphic 1-periodic maps $\widetilde A:\{|\Im z|<h\}\to\SL(2,\C)$ that extend continuously to $|\Im z|\le h$ and take values in $\SL(2,\R)$ on $\T$. We equip $\mathcal B_h$ with the topology induced by the supremum norm on the closed strip.

\begin{proposition}\label{prop23allc}
Let $A\in\mathcal B_h$ have $d=\deg A\ne0$ and $\omega(\alpha,A)=|d|$. Then there is a neighborhood $\mathcal U\subset\mathcal B_h$ of $A$ on which every $\widetilde A\in\mathcal U$ has degree $d$ and acceleration $|d|$, and the functional $\widetilde A\mapsto L(\alpha,\widetilde A)$ is real analytic on $\mathcal U$.
\end{proposition}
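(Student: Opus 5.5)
The plan is to combine three facts: degree is locally constant, acceleration is upper semicontinuous, and $\omega\ge|d|$ from Proposition~\ref{prop21lww}. Together they pin the acceleration at $|d|$ near $A$. Avila's theory then gives local analyticity of $L$ on the set where the acceleration is locally constant, in the spirit of \cite[Theorem~1 / Lemma~on stratification]{Av15}. Concretely, the argument runs in three steps.

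\emph{Step 1: degree.} The map $\widetilde A\mapsto\widetilde A|_\T$ is continuous from $\mathcal B_h$ into $C^0(\T,\SL(2,\R))$. The degree is a homotopy invariant and hence locally constant in $C^0$. So $\deg\widetilde A=d$ on a neighborhood of $A$.

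\emph{Step 2: acceleration.} I would use Avila's upper semicontinuity of acceleration in the holomorphic topology \cite{Av15}. This rests on two ingredients: continuity of $(\widetilde A,y)\mapsto L(\alpha,\widetilde A_y)$ on $\mathcal B_h\times(-h,h)$ (Bourgain--Jitomirskaya/Jitomirskaya--Koslover--Schulteis continuity in the analytic category, as used in \cite{Av15}), and convexity in $y$. If I wanted to be explicit: by \eqref{eq208aff} there is $\eta>0$ with $L(\alpha,A_\eta)-L(\alpha,A)=2\pi|d|\eta$. Convexity gives
\[
2\pi\omega(\alpha,\widetilde A)\le \frac{L(\alpha,\widetilde A_\eta)-L(\alpha,\widetilde A)}{\eta},
\]
and the right side is close to $2\pi|d|$ for $\widetilde A$ near $A$, by joint continuity of $L$. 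Since $\omega$ is an integer, this gives $\omega(\alpha,\widetilde A)\le|d|$ near $A$. Combined with Proposition~\ref{prop21lww} and Step~1, we get $\omega(\alpha,\widetilde A)=|d|$ on a neighborhood $\mathcal U$.

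\emph{Step 3: analyticity.} This is the main obstacle, and I would follow Avila's argument that $L$ is analytic on the open set where the acceleration is constant. Shrinking $\mathcal U$ and $\eta$ uniformly, I want $y\mapsto L(\alpha,\widetilde A_y)$ affine with slope $2\pi|d|$ on $[0,\eta]$ for all $\widetilde A\in\mathcal U$. This needs a uniform affine window. I would obtain it from quantization plus convexity: the right derivative at any $y\in[0,\eta]$ is at least $\omega=|d|$ times $2\pi$, and the secant over $[0,\eta]$ stays below $2\pi(|d|+1)$ by continuity, so the slope is exactly $2\pi|d|$ on most of the interval. Taking the midpoint $y_0=\eta/2$, the cocycle $\widetilde A_{y_0}$ is uniformly hyperbolic by Avila's regularity theorem, as in Section~\ref{sec23hyp}. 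Uniform hyperbolicity is open, and the uniformly hyperbolic Lyapunov exponent depends analytically on the complex cocycle: this follows from \eqref{eq206wdll}, since the invariant bundles and the multiplier $\lambda$ vary holomorphically. Then
\[
L(\alpha,\widetilde A)=L(\alpha,\widetilde A_{y_0})-2\pi|d|y_0
\]
exhibits $L$ as the restriction of a real-analytic functional on $\mathcal U$. Here $\widetilde A\mapsto\widetilde A_{y_0}$ is a bounded linear restriction map into holomorphic maps on a thinner strip around height $y_0$.

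The delicate point is making the affine window and the uniform hyperbolicity at height $y_0$ hold uniformly on a full neighborhood. I would handle it as above, via continuity of $L$ in $(\widetilde A,y)$ and the integrality of slopes.
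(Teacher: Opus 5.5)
Your proof is correct, but it takes a different and heavier route than the paper. You bound $\omega(\alpha,\widetilde A)$ from above by upper semicontinuity of the acceleration. This needs continuity of $L(\alpha,\cdot)$ at the real height $y=0$, where the cocycle is not uniformly hyperbolic, so the Bourgain--Jitomirskaya theorem is genuinely required. You then build a uniform affine window from that continuity together with quantization at every height of every nearby cocycle. Finally, you invoke Avila's regularity theorem separately for each $\widetilde A$ at $y_0=\eta/2$.

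One small imprecision: to place $y_0=\eta/2$ inside the window, you need the secant below $2\pi(|d|+\tfrac12)$, not merely below $2\pi(|d|+1)$. A secant below $2\pi(|d|+\epsilon)$ forces slope $2\pi|d|$ on an interval of length greater than $(1-\epsilon)\eta$. Continuity supplies this, and your phrase ``most of the interval'' suggests you intended it.

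The paper argues from the other end. It uses the regularity theorem only once, for $A$ itself, to make $A_\eta$ uniformly hyperbolic. Openness of uniform hyperbolicity and local constancy of the winding number of the unstable multiplier then show that, for every nearby $\widetilde A$, the function $y\mapsto L(\alpha,\widetilde A_y)$ has slope exactly $2\pi|d|$ at $y=\eta$. Proposition~\ref{prop21lww} gives right derivative at least $2\pi|d|$ at $y=0$. Monotonicity of the derivative of a convex function squeezes it to $2\pi|d|$ on all of $[0,\eta]$. This yields both $\omega(\alpha,\widetilde A)=|d|$ and $L(\alpha,\widetilde A)=L(\alpha,\widetilde A_\eta)-2\pi|d|\eta$ in a single step, at the height where uniform hyperbolicity is already known.

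The squeeze therefore needs no continuity of $L$ at non-uniformly-hyperbolic cocycles, no quantization for the perturbed cocycles, and no repeated use of the regularity theorem. It also recovers continuity of $L$ near $A$ as a byproduct. Your upper-semicontinuity-plus-integrality argument is the standard scheme and is sound; the paper simply gets the same conclusion from less input.
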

\begin{proof}
By the argument leading to~\eqref{eq208aff}, with $d$ replaced by $|d|$, choose $\eta\in(0,h)$ such that $A_\eta$ is uniformly hyperbolic with acceleration $|d|$. Since uniform hyperbolicity is open and the winding number of the unstable multiplier is locally constant, there is a neighborhood $\mathcal U\subset\mathcal B_h$ of $A$ such that for every $\widetilde A\in\mathcal U$, $\deg\widetilde A=d$ and $\widetilde A_\eta$ is uniformly hyperbolic with $\omega(\alpha,\widetilde A_\eta)=|d|$. By Proposition~\ref{prop21lww}, $\omega(\alpha,\widetilde A)\ge|d|$. Since the convex function $y\mapsto L(\alpha,\widetilde A_y)$ has right derivative $\ge 2\pi|d|$ at $y=0$ and derivative $2\pi|d|$ at $y=\eta$, its derivative is identically $2\pi|d|$ on $[0,\eta]$. Hence $\omega(\alpha,\widetilde A)=|d|$ and
\begin{equation}\label{eq29abb}
L(\alpha,\widetilde A)=L(\alpha,\widetilde A_\eta)-2\pi|d|\eta, \quad \widetilde A\in\mathcal U.
\end{equation}
After shrinking $\mathcal U$ if needed, the holomorphic invariant frames and the unstable multiplier $\lambda_{\widetilde A}$ of $\widetilde A_\eta$ depend holomorphically on complex perturbations of $\widetilde A$ (by the contracting projective graph transform), with $\lambda_{\widetilde A}/\lambda_A$ uniformly close to $1$ on $\T$. Therefore
\[
L(\alpha,\widetilde A_\eta)-L(\alpha,A_\eta)
=\Re\int_\T\log\frac{\lambda_{\widetilde A}(x)}{\lambda_A(x)}\,dx
\]
is pluriharmonic on the complexified neighborhood and real analytic on $\mathcal U$ (compare \cite[Proposition~5]{Av15}). By~\eqref{eq29abb}, $L(\alpha,\cdot)$ is real analytic on $\mathcal U$.
\end{proof}

Next, we give a self-contained proof of the rigidity result for stationary points stated by Avila in \cite[Remark~13, pp.~27--28]{Av15}.

\begin{proposition}\label{prop204staa}
Suppose $A\in\mathcal B_h$ is such that $A_y$ is uniformly hyperbolic for all $0<y\le\eta$, where $\eta\in(0,h)$. If
\[
\left.\frac d{dt}\right|_{t=0}
L\bigl(\alpha,(A\exp(tW))_\eta\bigr)=0
\]
for every 1-periodic $W\in C^\omega(\T,\mathfrak{sl}(2,\R))$ extending to $|\Im z|\le h$, then there exists a 1-periodic gauge $B\in C^\omega(\T,\SL(2,\R))$ such that $(B\circ T)AB^{-1}$ takes values in $\SO(2,\R)$.
\end{proposition}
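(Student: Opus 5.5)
The plan is to show that stationarity forces the unstable and stable bundles of $A_y$ to extend across the real axis as a pair of complex-conjugate lines. Such a pair is an invariant conformal structure, which can then be straightened to a rotation cocycle.

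\emph{Step 1 (first variation at height $\eta$).} Since $(\alpha,A_y)$ is uniformly hyperbolic for $0<y\le\eta$, Section~\ref{sec23hyp} gives invariant line bundles $E^u(z),E^s(z)$ that are holomorphic and 1-periodic in $z$ on the open strip $0<\Im z<\eta'$ for some $\eta'>\eta$. The reason is that uniform hyperbolicity is open, so it persists slightly beyond $\eta$, and the bundles at nearby heights are restrictions of a single holomorphic object by uniqueness of the invariant sections. Let $P(z)$ be the projection onto $E^u(z)$ along $E^s(z)$; it is holomorphic, 1-periodic and satisfies $A(z)P(z)=P(z+\alpha)A(z)$. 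Diagonalizing by the frame $V$ and using~\eqref{eq206wdll}, I will compute the derivative of $\int\log|\lambda|$ under $A\mapsto A\exp(tW)$. The off-diagonal part of $V^{-1}WV$ is removed to first order by solving a twisted cohomological equation, which is solvable by uniform hyperbolicity. This should give
\[
\left.\frac d{dt}\right|_{t=0}L\bigl(\alpha,(A\exp(tW))_\eta\bigr)=\Re\int_\T \tr\bigl(W(x+i\eta)P(x+i\eta)\bigr)\,dx .
\]
Justifying this formula, including the holomorphic dependence of the frames under perturbation as in the proof of Proposition~\ref{prop23allc}, is routine but must be done carefully.

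\emph{Step 2 (Fourier reflection).} I expand $P(z)=\sum_k P_k e^{-2\pi ikz}$ as a Laurent series in $e^{2\pi iz}$, convergent on the annulus corresponding to $0<\Im z<\eta'$, and write $P_k^0$ for the traceless part of $P_k$. A real-analytic $W$ has coefficients $\widehat W_{-k}=\overline{\widehat W_k}$, traceless but otherwise arbitrary. Vanishing of the derivative for all such $W$ then gives
\[
P_k^0=-\overline{P_{-k}^0}\quad(k\ge1),\qquad \Re P_0^0=0 ,
\]
so $P^0_k=-\overline{P^0_{-k}}$ for all $k$. Hence the series $G(z)=\sum_k P_k^0e^{-2\pi ikz}$ satisfies, formally, $G(z)=-\overline{G(\bar z)}$. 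The series on the right is the same Laurent series as $G$ and converges on the reflected annulus $-\eta'<\Im z<0$. The domain of convergence of a Laurent series is a single annulus, so it contains the convex hull of both annuli and hence the real circle. Therefore $P=\tfrac12 I+G$ extends holomorphically to $|\Im z|<\eta'$. By the identity theorem the invariance relation persists, and $P$ remains a rank-one idempotent, on $\T$. On $\T$ the reflection identity reads $\overline{P(x)}=I-P(x)$. Thus $E^s(x)=\overline{E^u(x)}$, and these lines are distinct, so $E^u(x)$ is non-real.

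\emph{Step 3 (gauge).} I choose a nonvanishing 1-periodic real-analytic section $u(x)=v_1(x)+iv_2(x)$ of $E^u$ on $\T$, for instance the first column of the frame $V$ continued to $\T$ through the extended projection $P$. Because $u$ and $\bar u$ are independent, $v_1$ and $v_2$ are real-independent. I set $B(x)^{-1}=[v_1\ v_2]$ and rescale by $|\det|^{-1/2}$, flipping the sign of $v_2$ if necessary so that $\det>0$. Invariance gives $A(x)u(x)=\mu(x)u(x+\alpha)$ for some scalar $\mu(x)$. In the real basis $(v_1,v_2)$, multiplication by a complex scalar is a rotation–dilation matrix, so $(B\circ T)AB^{-1}$ is a similarity, and determinant one makes it $\SO(2,\R)$-valued.

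The main obstacle I expect is Step~2's analytic continuation. One must check that the Laurent expansion of $P$ is a single series on the whole strip $0<\Im z<\eta'$, and that the coefficient identity forces convergence up to and across $\Im z=0$. Without the annulus-convexity argument, $P$ could a priori degenerate as $y\downarrow0$, where uniform hyperbolicity is lost.
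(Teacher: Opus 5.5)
Your proposal is correct and follows the paper's proof in all essentials. You use the same first-variation formula at height $\eta$ (only the traceless part $K=P-\tfrac12 I$ enters) and the same coefficient reflection $K_{-j}=-\overline{K_j}$, obtained by testing against real trigonometric $W$. You continue $K$ across $\T$ in the same way: your Laurent-annulus argument is equivalent to the paper's bound $\|K_j\|=\|K_{-j}\|\le\sup_x\|K(x+i\eta)\|e^{-2\pi|j|\eta}$.

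The only difference is in the last step, and it is minor. The paper avoids choosing a nonvanishing section: $H=-iK$ is a real invariant field with $H^2=-\tfrac14 I$, so $M=\pm2HJ$ is an invariant positive-definite metric and $B=M^{-1/2}$. You instead take real and imaginary parts of a section of the non-real line $E^u$. That also works once the section is produced by the usual triviality-plus-projection argument.
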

\begin{proof}
For $0<\Im z\le\eta$, let $P(z)$ be the holomorphic projection onto the unstable direction of $A(z)$ along the stable direction, so that $P^2=P$, $\tr P=1$, and $P(z+\alpha)=A(z)P(z)A(z)^{-1}$. Set $K(z)=P(z)-\frac12 I$. We first verify the first-variation formula
\begin{equation}\label{eq211pjdv}
\left.\frac d{dt}\right|_{t=0}L\bigl(\alpha,(A\exp(tW))_\eta\bigr)
=\Re\int_\T\tr\bigl(K(x+i\eta)W(x+i\eta)\bigr)\,dx.
\end{equation}
Choose a family of holomorphic 1-periodic frames $V_t$ near height $\eta$, depending analytically on $t$ near $0$, such that $A\exp(tW)V_t=(V_t\circ T)\diag(\lambda_t,\lambda_t^{-1})$. Differentiating the $(1,1)$-entry at $t=0$ with $k=(V_0^{-1}\dot V_0)_{11}$ gives
\[
\frac{\dot\lambda_0}{\lambda_0}=(V_0^{-1}WV_0)_{11}+k-k\circ T
=\tr(PW)+k-k\circ T.
\]
Integrating over $\T$, taking real parts, and using $\tr W=0$ yields \eqref{eq211pjdv}.

By Cauchy's theorem, the Fourier coefficients
\[
K_j=e^{2\pi j y}\int_\T K(x+iy)e^{-2\pi i jx}\,dx, \quad j\in\Z,
\]
are independent of $y\in(0,\eta]$. We test~\eqref{eq211pjdv} against $W(x)=U\cos(2\pi jx)$ and $W(x)=U\sin(2\pi jx)$ for arbitrary $U\in\mathfrak{sl}(2,\R)$ and $j\ge0$. The factors $e^{\pm 2\pi j\eta}$ cancel in the pairing, leaving
\[
\Re\tr\bigl((K_j+K_{-j})U\bigr)=0,\qquad
\Re\tr\bigl(-i(K_{-j}-K_j)U\bigr)=0
\]
for all $U\in\mathfrak{sl}(2,\R)$. By nondegeneracy of the trace pairing on $\mathfrak{sl}(2,\R)$,
\begin{equation}\label{eq212frrf}
K_{-j}=-\overline{K_j}, \quad j\in\Z.
\end{equation}
Estimating $K_{-j}$ at height $y=\eta$ for $j\ge0$ and using \eqref{eq212frrf}, we obtain, for every $j\in\Z$,
\[
\|K_j\|=\|K_{-j}\|\le\sup_{x\in\T}\|K(x+i\eta)\|\,e^{-2\pi|j|\eta}.
\]
The Fourier series $\sum_{j\in\Z}K_j e^{2\pi ijz}$ converges absolutely on the two-sided strip $|\Im z|<\eta$, extending $K$ holomorphically across $\T$ with $K(x)=-\overline{K(x)}$ for $x\in\T$.

By analytic continuation, $H(z)=-iK(z)$ is real analytic on $\T$ and takes values in $\mathfrak{sl}(2,\R)$ on $\T$, with $H^2=-\frac14 I$ and $H\circ T=AHA^{-1}$. Thus $-2HJ$ is symmetric on $\T$ with $\det(-2HJ)=1$, and hence is everywhere definite on $\T$. Choosing $\epsilon\in\{1,-1\}$ so that $M=-2\epsilon HJ>0$ on $\T$, the identity $A^{-1}J=JA^T$ implies $M\circ T=AMA^T$. Setting $B=M^{-1/2}\in C^\omega(\T,\SL(2,\R))$ and $C=(B\circ T)AB^{-1}$, we obtain $CC^T=I$, as required.
\end{proof}

\begin{proposition}\label{prop25zrr}
Suppose $d=\deg A\ne0$, $L(\alpha,A)=0$, and $\omega(\alpha,A)=|d|$. Then $(\alpha,A)$ is analytically conjugate to an $\SO(2,\R)$-valued cocycle, and in particular is premonotonic with monotonicity sign $\operatorname{sgn}(d)$.
\end{proposition}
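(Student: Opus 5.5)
The plan is to combine Proposition~\ref{prop23allc} with Proposition~\ref{prop204staa}, and then to read off monotonicity from the degree of the conjugated rotation cocycle.

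\emph{Step 1: uniform hyperbolicity off the real line.} Fix $h$ smaller than the analyticity width and regard $A\in\mathcal B_h$. As in the argument leading to \eqref{eq208aff}, with $d$ replaced by $|d|$, there is an $\eta\in(0,h)$ such that
\[
L(\alpha,A_y)=2\pi|d|y\quad\text{for }0\le y\le\eta,
\]
and $(\alpha,A_y)$ is uniformly hyperbolic for every $y\in(0,\eta]$. This is exactly the hypothesis of Proposition~\ref{prop204staa}.

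\emph{Step 2: the stationarity condition.} Let $W\in C^\omega(\T,\mathfrak{sl}(2,\R))$ be 1-periodic and extend to $|\Im z|\le h$. Then $t\mapsto A\exp(tW)$ is a real-analytic path in $\mathcal B_h$. For small $|t|$ it stays in the neighborhood $\mathcal U$ of Proposition~\ref{prop23allc}. Along this path the map $t\mapsto L(\alpha,A\exp(tW))$ is real analytic, nonnegative, and vanishes at $t=0$, so its derivative at $t=0$ is zero. By \eqref{eq29abb},
\[
L\bigl(\alpha,(A\exp(tW))_\eta\bigr)=L(\alpha,A\exp(tW))+2\pi|d|\eta
\]
on $\mathcal U$, with the constant independent of $t$. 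Hence
\[
\left.\frac{d}{dt}\right|_{t=0}L\bigl(\alpha,(A\exp(tW))_\eta\bigr)=0.
\]
The only subtlety is that $(A\exp(tW))_\eta$ means the complexified cocycle evaluated at height $\eta$, which matches the convention of \eqref{eq29abb}.

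\emph{Step 3: conjugation to rotations and the degree.} Proposition~\ref{prop204staa} now gives a 1-periodic $B\in C^\omega(\T,\SL(2,\R))$ such that $C=(B\circ T)AB^{-1}$ takes values in $\SO(2,\R)$. Write $C(x)=R_{\phi(x)}$ with $\phi:\R\to\R$ real analytic and $\phi(x+1)=\phi(x)+\deg C$. Conjugation by a 1-periodic gauge does not change the homotopy class, because $B\circ T$ is homotopic to $B$ through $B\circ T_{s\alpha}$. Hence $\deg C=d$.

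\emph{Step 4: from rotations to strict monotonicity.} This is the main obstacle, because $\phi'$ need not have a sign for a single step. For rotations, $Q_{R_\phi}=2\pi\phi' I$. For the iterate,
\[
C_N(x)=R_{\phi_N(x)},\qquad \phi_N=\sum_{k=0}^{N-1}\phi\circ T^k .
\]
The derivative $\phi'$ is 1-periodic with $\int_\T\phi'\,dx=d$. By unique ergodicity of the irrational rotation and continuity of $\phi'$,
\[
\frac1N\phi_N'\to d\quad\text{uniformly on }\T.
\]
So for large $N$ the function $\phi_N'$ has the sign of $d$ everywhere, and $\operatorname{sgn}(d)\,Q_{C_N}>0$ on $\T$. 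Since $C_N=(B\circ T^N)A_NB^{-1}$, this shows $(\alpha,A)$ is premonotonic with monotonicity sign $\operatorname{sgn}(d)$. Here the gauge $B$ is analytic and hence $C^1$, as the definition requires.
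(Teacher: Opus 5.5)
Your proof is correct and follows the paper's argument essentially step by step. Proposition~\ref{prop23allc} together with nonnegativity of $L$ forces the first variation of $\widetilde A\mapsto L(\alpha,\widetilde A_\eta)$ to vanish at $A$; Proposition~\ref{prop204staa} then yields the analytic $\SO(2,\R)$ conjugacy; and unique ergodicity applied to $\frac1N\phi_N'$ gives strict monotonicity of long iterates with sign $\operatorname{sgn}(d)$, with your explicit checks of $\deg C=d$ and $Q_{R_\phi}=2\pi\phi' I$ filling in details the paper leaves implicit.
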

\begin{proof}
By Proposition~\ref{prop23allc}, $L(\alpha,\widetilde A)=L(\alpha,\widetilde A_\eta)-2\pi|d|\eta$ is real analytic on a neighborhood of $A$ in $\mathcal B_h$. Since $L(\alpha,\widetilde A)\ge0$ everywhere and $L(\alpha,A)=0$, the differential of $\widetilde A\mapsto L(\alpha,\widetilde A_\eta)$ vanishes at $A$. Proposition~\ref{prop204staa} therefore provides $B\in C^\omega(\T,\SL(2,\R))$ such that
\[
B(x+\alpha)A(x)B(x)^{-1}=R_{\phi(x)},\qquad \phi(x)=dx+\psi(x),
\]
for a 1-periodic real-analytic function $\psi:\T\to\R$. The $N$-step product of $R_\phi$ is a rotation with parameter
\[
Ndx+\frac{d\alpha N(N-1)}2+\sum_{k=0}^{N-1}\psi(x+k\alpha),
\]
whose derivative is $N\bigl(d+\frac1N\sum_{k=0}^{N-1}\psi'(x+k\alpha)\bigr)$. Since $\frac1N\sum_{k=0}^{N-1}\psi'\circ T^k\to\int_\T\psi'\,dx=0$ uniformly on $\T$ by unique ergodicity, this derivative has the strict sign of $d$ everywhere on $\T$ for all sufficiently large $N$.
\end{proof}

\section{Signedness and invariant matrix measures}\label{sec301cnn}

Let $\TT_{\alpha,A}H=A^T(H\circ T_\alpha)A$ be the pullback operator on symmetric matrix fields, abbreviated to $\TT$ when $(\alpha,A)$ is fixed. For a finite positive $\Sym_2\R$-valued Borel measure $\mu=(\mu_{ij})$ on $\T$, its \emph{trace measure} $\nu=\tr\mu$ is the positive scalar measure defined by
\[
\nu(E)=\tr\bigl(\mu(E)\bigr)=\mu_{11}(E)+\mu_{22}(E)
\qquad(E\subset\T\text{ Borel}).
\]
Every component of $\mu$ is absolutely continuous with respect to $\nu$, so the entrywise Radon--Nikodym theorem gives $d\mu=P\,d\nu$, where $P\ge0$ and $\tr P=1$ $\nu$-almost everywhere. The pairing of $\mu$ with $K\in C(\T,\Sym_2\R)$ is
\[
\int_\T\tr(K\,d\mu)
=\sum_{i,j=1}^2\int_\T K_{ij}\,d\mu_{ji}
=\int_\T\tr(KP)\,d\nu.
\]

\subsection{Hahn--Banach separation}

\begin{lemma}\label{lem31hbb}
Let $A\in C(\T,\SL(2,\R))$, $\alpha\in\R$, and $F\in C(\T,\Sym_2\R)$. Exactly one of the following alternatives holds:
\begin{enumerate}
\item There exists $H\in C(\T,\Sym_2\R)$ (which may be chosen to be a trigonometric polynomial) such that $F+\TT H-H>0$ everywhere on $\T$.
\item There exists a nonzero finite positive matrix-valued Borel measure $\mu$ on $\T$ such that
\begin{equation}\label{eq31mssr}
\mu(TE)=\int_E A(x)\,d\mu(x)\,A(x)^T\quad(\text{for all Borel }E\subset\T),
\qquad \int_\T\tr(F\,d\mu)\le0.
\end{equation}
\end{enumerate}
\end{lemma}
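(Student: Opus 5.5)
We argue by Hahn--Banach separation in the Banach space $X=C(\T,\Sym_2\R)$ with the sup norm; by the Riesz representation theorem (applied entrywise), its dual is the space of finite $\Sym_2\R$-valued Borel measures, with pairing $\langle K,\mu\rangle=\int_\T\tr(K\,d\mu)$.

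\emph{Exclusivity.} First we check that (1) and (2) cannot both hold. Suppose $H$ satisfies (1) and $\mu$ satisfies (2). Since $F+\TT H-H\ge\delta I$ for some $\delta>0$ and $\mu$ is positive and nonzero, we get
\[
\int\tr\bigl((F+\TT H-H)\,d\mu\bigr)\ge\delta\,\nu(\T)>0 .
\]
On the other hand, the invariance relation in \eqref{eq31mssr}, extended from indicator functions to continuous $H$ by simple-function approximation, gives
\[
\int\tr\bigl(H(x+\alpha)A(x)\,d\mu(x)\,A(x)^T\bigr)=\int\tr(H\,d\mu).
\]
By cyclicity of the trace the left side is $\int\tr(\TT H\,d\mu)$, so $\int\tr((\TT H-H)\,d\mu)=0$. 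Hence $\int\tr(F\,d\mu)>0$, contradicting (2).

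\emph{Existence.} Suppose (1) fails for all continuous $H$. Set $\mathcal P=\{K\in X: K(x)>0\ \forall x\}$, an open convex cone, and consider the affine subspace $F+V$, where $V=\{\TT H-H: H\in X\}$ is a linear subspace. Since $(F+V)\cap\mathcal P=\emptyset$, the geometric Hahn--Banach theorem gives a nonzero continuous functional $\mu$ and a constant $c$ with $\langle K,\mu\rangle>c\ge\langle F+G,\mu\rangle$ for all $K\in\mathcal P$ and $G\in V$. Three consequences follow.
\begin{itemize}
\item Since $\mathcal P$ is a cone, $c\le0$ and $\langle K,\mu\rangle\ge0$ for $K\in\mathcal P$, hence for all $K\ge0$ by density. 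So $\mu$ is positive: test with $K=\phi\,vv^T$ for $\phi\ge0$ to get $v^T\mu(E)v\ge0$.
\item Boundedness of the linear functional $G\mapsto\langle G,\mu\rangle$ from above on the subspace $V$ forces it to vanish, so $\langle\TT H-H,\mu\rangle=0$ for all $H$.
\item Then $\langle F,\mu\rangle\le c\le0$.
\end{itemize}

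\emph{Invariance.} It remains to turn $\langle\TT H,\mu\rangle=\langle H,\mu\rangle$ for all continuous $H$ into the set identity in \eqref{eq31mssr}. Define the pushforward measure $\mu'(E)=\int_{T^{-1}E}A\,d\mu\,A^T$. The identity says $\int\tr(H\,d\mu')=\int\tr(H\,d\mu)$ for all continuous $H$, so uniqueness in Riesz representation gives $\mu'=\mu$. Substituting $E\mapsto TE$ yields the stated form. This bookkeeping is the step needing most care: the pairing convention $\tr(K\,d\mu)$ with symmetric $K$ must match the matrix ordering $A\,d\mu\,A^T$. It does, since $\tr(H(Tx)A\,d\mu A^T)=\tr(A^TH(Tx)A\,d\mu)$.

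\emph{Trigonometric polynomial $H$.} If (1) holds with continuous $H$, approximate $H$ uniformly by a trigonometric polynomial $H'$ (Fejér). Then $\TT H'-H'$ is uniformly close to $\TT H-H$, because $A$ is bounded, and strict positivity on compact $\T$ is an open condition, so $H'$ also satisfies (1).

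The main obstacle is the separation step: the positive cone must have nonempty interior, which holds in the sup norm since $\mathcal P$ is open. Because of this, no closedness of $V$ is required.
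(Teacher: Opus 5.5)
Your proposal is correct and follows the paper's proof essentially step by step. You prove exclusivity by integrating against $\mu$, then separate the affine space $F+\operatorname{ran}(\TT-I)$ from the open positive cone by Hahn--Banach with Riesz representation, use vanishing of the functional on the subspace, and finish with trigonometric approximation. Your explicit checks of positivity (via test fields $\phi\,vv^T$) and of covariance (via uniqueness in the Riesz representation) simply spell out what the paper compresses into ``self-duality of the positive semidefinite cone'' and ``equivalent to the covariance.''
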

\begin{proof}
The two alternatives are mutually exclusive since integrating $F+\TT H-H>0$ against a measure satisfying \eqref{eq31mssr} yields $0<\int_\T\tr((F+\TT H-H)\,d\mu)=\int_\T\tr(F\,d\mu)\le0$. Conversely, if (1) fails, the affine subspace $F+\operatorname{ran}(\TT-I)$ is disjoint from the open cone of everywhere positive-definite fields in $C(\T,\Sym_2\R)$. By the Hahn--Banach separation theorem and the Riesz representation theorem \cite[Chapters~2, 5, and~6]{Rud87}, there is a nonzero continuous linear functional $\ell$ on $C(\T,\Sym_2\R)$ that is nonnegative on the positive cone, vanishes on $\operatorname{ran}(\TT-I)$, and satisfies $\ell(F)\le0$. By self-duality of the positive semidefinite cone, $\ell$ is represented by a nonzero finite positive matrix measure $\mu$. The vanishing of $\ell$ on $\operatorname{ran}(\TT-I)$ means that
\[
\int_\T\tr\bigl(H(Tx)A(x)\,d\mu(x)\,A(x)^T\bigr)
=\int_\T\tr\bigl(H(x)\,d\mu(x)\bigr) \quad \text{for all } H\in C(\T,\Sym_2\R),
\]
which is equivalent to the covariance in \eqref{eq31mssr}.

Finally, if $H\in C(\T,\Sym_2\R)$ satisfies $F+\TT H-H>0$, uniform approximation by symmetric trigonometric polynomials $\widetilde H$ preserves positivity because
\[
\|\TT(H-\widetilde H)-(H-\widetilde H)\|_\infty
\le\bigl(\|A\|_\infty^2+1\bigr)\|H-\widetilde H\|_\infty.
\]
\end{proof}

\subsection{The positive-exponent case}

For the proof of Proposition~\ref{prop32cnnc}, fix $\alpha\in\R\setminus\mathbb Q$ and $A\in C^\omega(\T,\SL(2,\R))$ satisfying $d=\deg A>0$, $L(\alpha,A)>0$, and $\omega(\alpha,A)=d$, and set $\FF(H)=Q_A+\TT H$.

The main step in the positive-exponent case is to construct a positive connection. The monodromy and Blaschke constructions in Sections~\ref{sec4mnnd}--\ref{sec6fnn} then turn this infinitesimal positivity into premonotonicity on the original circle.

\begin{proposition}\label{prop32cnnc}
Under the above hypotheses, there exists a real-analytic 1-periodic map $H:\T\to\Sym_2\R$ such that
\begin{equation}\label{eq303pst}
\FF(H)-H>0\qquad \text{on }\T.
\end{equation}
\end{proposition}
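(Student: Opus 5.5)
By Lemma~\ref{lem31hbb} applied with $F=Q_A$, it suffices to exclude alternative~(2). That is, we must show that every nonzero finite positive matrix measure $\mu$ with $\mu(TE)=\int_E A\,d\mu\,A^T$ satisfies $\int_\T\tr(Q_A\,d\mu)>0$. Lemma~\ref{lem31hbb} then supplies $H$ as a symmetric trigonometric polynomial, hence real analytic, with \eqref{eq303pst}.

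\textbf{Step 1: reduction to long iterates.} First I record that the pairing is additive along the cocycle. Integrating the cocycle identity \eqref{eq21pqq} against $\mu$ and using covariance gives $\int_\T\tr(A_k^T(Q_A\circ T^k)A_k\,d\mu)=\int_\T\tr(Q_A\,d\mu)$. Hence
\[
\int_\T\tr(Q_{A_n}\,d\mu)=n\int_\T\tr(Q_A\,d\mu),\qquad n\ge1,
\]
and it is enough to prove $\int_\T\tr(Q_{A_n}\,d\mu)\ge c>0$ for some $n$.

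\textbf{Step 2: structure of $\mu$.} Write $d\mu=P\,d\nu$ with $\nu=\tr\mu$. Since $L(\alpha,A)>0$, covariance combined with Oseledets' theorem should force $P$ to have rank one, $P=ww^T$ $\nu$-a.e. I expect $w$ to be aligned with the unstable direction, $\nu$ to be singular, and $|A_n w|^2$ to grow at rate $e^{2nL}$ along the orbit. Under this structure the pairing is a weighted angular velocity:
\[
w^TQ_{A_n}w=|A_nw|^2\,\partial_x\arg(A_nw)
\]
by \eqref{eq13angg}.

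\textbf{Step 3: complexification.} Next I pass to the complex strip using the affine identity \eqref{eq208aff} and the uniform hyperbolicity of $(\alpha,A_y)$ for $0<y\le\eta$ from Section~\ref{sec23hyp}. I would realize the pairing as the boundary value at $y=0$ of $-\partial_y$ of a log-modulus, via Cauchy--Riemann, as in the scalar principle from the introduction. For nonnegativity I would adapt the convexity argument of Proposition~\ref{prop21lww} to a covariant unitary representation. Concretely, I would follow the transfer--scattering correspondence of \cite{MSB13}: represent $\mu$ by a unitary representation on $L^2(\nu)\otimes\C^2$ in which $A$ acts covariantly, and express $\int\tr(Q_A\,d\mu)$ as the derivative of a boundary phase.

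\textbf{Step 4: strict positivity (main obstacle).} The exponential dichotomy of $A_\eta$ should give a resolvent bound at height $\eta$ that does not depend on $\mu$. The three-lines theorem, interpolating between $y=0$ and $y=\eta$, should then upgrade the affine slope $2\pi d$ into a strictly positive lower bound for the phase derivative at $y=0$, of order $\eta^{-1}\log(1/r)$ as in the scalar model. This step is the hard one: $\mu$ is singular, so the analytic extension cannot be applied pointwise along $\nu$-typical fibers. The estimate must therefore be made at the operator level, and uniformly in $\mu$. My first attempt would be to bound the relevant resolvent by the dichotomy constants of $(\alpha,A_\eta)$, which are independent of $\mu$. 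Failing that, I would argue by contradiction using weak-$*$ compactness of normalized invariant measures with $\int\tr(Q_A\,d\mu)\le0$.
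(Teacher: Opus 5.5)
Your outline follows the paper's route: Lemma~\ref{lem31hbb}, singular rank-one invariant measures, a covariant unitary scattering representation in which the pairing is a boundary phase derivative, a representation-independent dichotomy bound, and three lines. However, Step~4, which you rightly call the heart of the matter, has no working mechanism, and the version you describe fails as stated. The rank-one density gives a unit vector $\xi=\Q v$ with $U_\mu(0)\xi=\xi$. Here $U_\mu(z)=P_{\rm out}^{-1}M_{S(\cdot+z)}P_{\rm in}$ is built from the scattering matrix \eqref{eq37sctt} and from $(Wf)(x)=\sigma(x)\sqrt{r_\mu(x)}\,f(Tx)$ on $L^2(\nu)$. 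So $1$ is an eigenvalue on the lower edge, and there is no resolvent at $1$ to interpolate from $y=0$ to $y=\eta$; the dichotomy of $A_\eta$ controls only the upper edge. You must move the spectral parameter to $e^s=1+r$. There unitarity gives $\|(U_\mu(x)-e^s)^{-1}\|\le 1/r$ on $y=0$, and three lines then yield $\langle\xi,-iU_\mu(0)^*U_\mu'(0)\xi\rangle\ge\frac r\eta\log\frac1{Cr}$. This needs two things: the upper-edge bound $C$ must be uniform as $s\downarrow0$, so that $Cr<1$ is attainable. Crucially, $U_\mu(z)-e^s$ must also be invertible on the entire closed strip $0\le\Im z\le\eta$.

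That interior invertibility is the missing idea. It does not follow by perturbing the $s=0$ resolvent, whose norm is unbounded as $z\to0$. The paper obtains it from the deformation $A^{(is)}=\exp(isJ/2)A\exp(isJ/2)$, whose scattering matrix is $e^{-s}S$. Consequently $W-M_{\widehat A^{(is)}(\cdot+z)}$ equals $U_\mu(z)-e^sI$ up to bounded invertible factors, using $a\ne0$ on the strip. The hyperbolicity of the deformed family then comes in four steps:
\begin{enumerate}
\item For $s>0$, $\widehat A^{(is)}$ maps $\overline\D$ into $\D$ at $y=0$, so it is uniformly hyperbolic there with acceleration $d$; the unstable multiplier $e^sa+bm$ winds like $a$.
\item At $y=\eta$ it has acceleration $d$ by openness of uniform hyperbolicity.
\item Convexity then makes $y\mapsto L(\alpha,A^{(is)}_y)$ affine on $[0,\eta]$.
\item Avila's regularity theorem gives uniform hyperbolicity at every intermediate height.
\end{enumerate}
Only then does the Neumann-series inversion of $W-M_D$ in the invariant frame give a bounded holomorphic resolvent on the whole strip, with constants depending only on the frame and dichotomy constants of $D$. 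This uses minimal acceleration a second time, for the deformed family, beyond \eqref{eq208aff}.

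Your fallback via weak-$*$ compactness does not help. Lemma~\ref{lem31hbb} needs strict positivity only for each single $\mu$, and compactness merely restates that problem. Two minor points: Step~1 is correct but unnecessary. In Step~2, ``aligned with the unstable direction'' has no meaning $\nu$-a.e., because $\nu$ is singular while Oseledets directions are defined only Lebesgue-a.e. Rank one follows instead because $\sqrt{\det P}\,d\nu$ is $T$-invariant, hence a multiple of Lebesgue measure by unique ergodicity, hence zero since $\nu$ is singular.
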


We begin by analyzing the structure of invariant positive matrix measures when $L(\alpha,A)>0$.

\begin{lemma}\label{lem302rkk}
Let $\alpha\in\R\setminus\mathbb Q$ and $A\in C^\omega(\T,\SL(2,\R))$ with $L(\alpha,A)>0$. Every finite positive matrix measure $\mu$ satisfying the covariance in \eqref{eq31mssr} is singular with respect to Lebesgue measure and has a rank-one Radon--Nikodym density with respect to its trace measure $\nu=\tr\mu$.
\end{lemma}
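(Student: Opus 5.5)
Write $d\mu=P\,d\nu$ with $P\ge0$, $\tr P=1$, as in the discussion preceding Lemma~\ref{lem31hbb}. The covariance says that the pushforward $T_*\nu$ has density $\tr(APA^T)$ with respect to $\nu$. Along $\nu$-typical orbits, this density transports the normalized matrix $P$ as
\[
P(Tx)=\frac{A(x)P(x)A(x)^T}{\tr\bigl(A(x)P(x)A(x)^T\bigr)}.
\]
The plan has two parts. First show that $\nu$ is singular with respect to Lebesgue measure. Then, on the support of $\nu$, show that $P$ must be rank one by a growth or invariance argument combined with Oseledets' theorem.

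\textbf{Rank one.} The scalar measure $\nu$ is not $T$-invariant. However, $T_*\nu\ll\nu$ and $(T^{-1})_*\nu\ll\nu$, since $A$ is invertible and $APA^T\ne0$ whenever $P\ne0$. So $\nu$ is quasi-invariant, with Radon--Nikodym cocycle $\tr(A P A^T)$ evaluated at $T^{-1}x$. Iterating gives
\[
\mu(T^nE)=\int_E A_n\,d\mu\,A_n^T,\qquad\text{hence}\qquad \nu(T^nE)=\int_E \tr\bigl(A_nPA_n^T\bigr)\,d\nu .
\]
Total mass is preserved: $\nu(\T)=\int\tr(A_nPA_n^T)\,d\nu$ for all $n$, and the same holds for negative $n$ using $A_{-n}$.

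Now suppose $P$ had rank two on a set $E$ of positive $\nu$-measure, say with smallest eigenvalue $\ge c>0$ on $E$. Then $\tr(A_nPA_n^T)\ge c\|A_n\|^2$ on $E$, and the mass identity gives $\int_E\|A_n\|^2\,d\nu\le \nu(\T)/c$ for all $n\in\Z$. This must be contradicted with $L>0$. The difficulty is that $\nu$ is not Lebesgue measure, so Oseledets' theorem for Lebesgue measure does not apply directly. To handle this, I would use the Kingman/uniform upper semicontinuity statement for uniquely ergodic rotations only in the direction that is needed. Alternatively, I would exploit the uniform hyperbolicity of the complexified cocycle. The cleaner route is the following. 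Since $\|A_n\|\ge1$, the bound forces $\sum_n$-type control, so instead I argue directly. Decompose $P=P_u+P_s$ in Lebesgue-a.e.\ defined Oseledets coordinates; this fails off a Lebesgue-null set, which brings us back to the singularity part.

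\textbf{Singularity (main obstacle).} Therefore I would first prove singularity. Let $\nu=\nu_{ac}+\nu_{s}$. The covariance is linear and $T$ preserves Lebesgue measure, so both the absolutely continuous and singular parts of $\mu$ are again covariant. Suppose the absolutely continuous part is nonzero, $d\mu_{ac}=M(x)\,dx$ with $M\ge0$ integrable. Then $M(Tx)=A(x)M(x)A(x)^T$ Lebesgue-a.e. Taking determinants gives $\det M\circ T=\det M$, so $\det M$ is constant by ergodicity. Likewise, $\tr M$ has a fixed distribution under $T$. By Oseledets' theorem with $L>0$, for a.e.\ $x$ and $v\notin E^s(x)$ we have $\|A_n(x)v\|\to\infty$ exponentially. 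Hence $\tr M(T^nx)=\tr(A_nMA_n^T)(x)\to\infty$ along $n\to\infty$ unless $M(x)$ is supported on $E^s(x)$. The same holds in backward time with $E^u$. By Poincar\'e recurrence, $\tr M\circ T^n$ cannot tend to infinity on a positive-measure set. So $M(x)$ has range in $E^s(x)\cap E^u(x)=0$ a.e. This gives $M=0$, and $\mu$ is singular.

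\textbf{Rank one on the singular part.} For the rank-one claim on the singular part, I would apply the same recurrence argument to the quasi-invariant measure $\nu$. Pass to a $T$-invariant finite measure equivalent to $\nu$ restricted to the rank-two set; this requires bounded mass growth, which the identity above supplies. Alternatively, use the uniform bound $\int_E\|A_n\|^2\,d\nu\le C$ together with $\det(P(T^nx))$-transport: $\det$ of $A_nPA_n^T$ equals $\det P$. This makes $\det P\,d\nu$ with the right weight $T$-invariant up to a normalizable density, hence a multiple of Lebesgue measure by unique ergodicity. That contradicts singularity unless $\det P=0$ $\nu$-a.e. Making this last "weighted determinant measure is invariant" step precise, i.e.\ showing that $(\det P)^{1/2}\,d\nu$ is an invariant finite measure, is where I expect the real work to be.
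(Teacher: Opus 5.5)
\textbf{Singularity.} Your singularity argument is correct and is a mild variant of the paper's. The paper splits according to the a.e.\ constant value of $\det M$. When $\det M>0$ it builds a measurable conjugacy $M^{-1/2}$ into $\SO(2,\R)$; when $\det M=0$ it builds a measurable invariant line field. It then excludes each case by recurrence and $L(\alpha,A)>0$. You instead push the range of $M(x)$ into $E^s(x)\cap E^u(x)=\{0\}$, using Oseledets in both time directions together with recurrence of the a.e.\ finite function $\tr M$. That works and avoids the case split.

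\textbf{The gap: the rank-one claim.} Your proposal never actually proves it. The first attempt, via $\int_E\|A_n\|^2\,d\nu\le\nu(\T)/c$, cannot be closed with $L(\alpha,A)>0$: all Oseledets information holds Lebesgue-a.e., while $\nu$ is carried by a Lebesgue-null set. The suggestion to pass to an invariant measure equivalent to $\nu$ on the rank-two set is unjustified. At the end you name the right object, $\sqrt{\det P}\,d\nu$, but leave its invariance as ``the real work.'' That invariance is the entire content of the step, and it follows in a few lines from the transport formula you already wrote.

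Put $r_\mu=\tr(APA^T)$. The trace of the covariance gives $\nu(TE)=\int_E r_\mu\,d\nu$, so $r_\mu$ is the density of $E\mapsto\nu(TE)$, not of $T_*\nu$ as you wrote. Comparing $\mu(TE)=\int_E (P\circ T)\,r_\mu\,d\nu$ with $\int_E APA^T\,d\nu$ gives $P\circ T=APA^T/r_\mu$ $\nu$-a.e. Since $\det A=1$ and the matrices are $2\times2$, $\det P(Tx)=\det P(x)/r_\mu(x)^2$, that is, $\sqrt{\det P\circ T}\,r_\mu=\sqrt{\det P}$ $\nu$-a.e. Hence for every Borel $E$,
\[
\int_{TE}\sqrt{\det P}\,d\nu=\int_E\sqrt{\det P\circ T}\,r_\mu\,d\nu=\int_E\sqrt{\det P}\,d\nu .
\]
The first equality is the change of variables for this Radon--Nikodym cocycle; check it on indicator functions.

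The measure $\sqrt{\det P}\,d\nu$ is finite because $\sqrt{\det P}\le\frac12\tr P=\frac12$. Unique ergodicity therefore makes it $c\,dx$. Since it is also absolutely continuous with respect to the singular measure $\nu$, it must vanish. Thus $\det P=0$ $\nu$-a.e., and $\tr P=1$ gives rank one.

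The exponent $\frac12$ is exactly what makes the transport of $\det P$ cancel the cocycle $r_\mu$. Note also that $L(\alpha,A)>0$ enters this step only through the singularity already established, which is why your attempts to use Oseledets at $\nu$-typical points were misdirected.
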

\begin{proof}
Since $T$ preserves Lebesgue measure, the Lebesgue decomposition of $\mu$ splits into separately covariant absolutely continuous and singular parts. If $M\,dx$ is the absolutely continuous part, then $M(Tx)=A(x)M(x)A(x)^T$ almost everywhere, so $\det M$ is $T$-invariant and hence constant almost everywhere by ergodicity. If $\det M>0$ almost everywhere, then $B=M^{-1/2}$ measurably conjugates $(\alpha,A)$ into $\SO(2,\R)$. By Poincar\'e recurrence to a set of positive measure where $B$ and $B^{-1}$ are bounded, $\|A_n(x)\|$ is bounded along return subsequences for almost every $x$ in that set, contradicting the almost-everywhere limit $n^{-1}\log\|A_n(x)\|\to L(\alpha,A)>0$.

Similarly, if $\det M=0$ and $M$ is not almost everywhere zero, then $\{x:M(x)\ne0\}$ has full measure and $M(x)=v(x)v(x)^T$ for a measurable vector field $v\ne0$ satisfying $A(x)v(x)=\pm v(Tx)$. Recurrence to a set of positive measure where $\|v\|$ is bounded away from $0$ and $\infty$ gives, for almost every $x$ in that set, a return subsequence $n_k\to\infty$ along which $\|A_{n_k}(x)v(x)\|$ remains bounded away from $0$ and $\infty$, contradicting Oseledets' theorem \cite[Chapter~4]{Via14} since the Lyapunov exponents are $\pm L(\alpha,A)\ne0$. Thus $M=0$ almost everywhere, and $\mu$ is singular.

For the density $P=d\mu/d\nu$, set $r_\mu(x)=\tr(A(x)P(x)A(x)^T)>0$ for $\nu$-almost every $x$. The covariance~\eqref{eq31mssr} reads
\begin{equation}\label{eq34dnns}
\nu(TE)=\int_E r_\mu\,d\nu,\qquad
P(Tx)=\frac{A(x)P(x)A(x)^T}{r_\mu(x)}\quad(\nu\text{-a.e.}).
\end{equation}
Taking determinants gives $\sqrt{\det P(Tx)}\,r_\mu(x)=\sqrt{\det P(x)}$ $\nu$-almost everywhere, which implies that $\sqrt{\det P}\,d\nu$ is a finite $T$-invariant Borel measure on $\T$. By unique ergodicity of $T$, $\sqrt{\det P}\,d\nu$ must be a multiple of Lebesgue measure; since $\nu$ is singular, $\det P=0$ $\nu$-almost everywhere. As $\tr P=1$, $P$ has rank one $\nu$-almost everywhere.
\end{proof}

\subsection{A boundary resolvent estimate}

All Hilbert spaces below are complex, with inner products conjugate-linear in the first argument.

\begin{lemma}\label{lem33rsvv}
Let $\eta>0$ and let $U(z)$ be a 1-periodic holomorphic family of bounded operators on a Hilbert space defined on a neighborhood of the strip $0\le\Im z\le\eta$, such that $U(x)$ is unitary for all $x\in\R$. Fix $r,C>0$, and suppose that $U(z)-(1+r)I$ is invertible throughout $0\le\Im z\le\eta$ with
\[
\sup_{x\in\T}\|(U(x+i\eta)-(1+r)I)^{-1}\|\le C,\qquad Cr<1.
\]
Then for every $x_0\in\T$ and every unit vector $\phi$ satisfying $U(x_0)\phi=\phi$,
\[
\langle\phi,-iU(x_0)^*U'(x_0)\phi\rangle
\ge\frac r\eta\log\frac1{Cr}>0.
\]
\end{lemma}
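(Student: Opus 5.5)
The plan is to reduce the statement to the scalar boundary principle from the introduction, applied to a diagonal resolvent entry. Set $R(z)=(U(z)-(1+r)I)^{-1}$, which is holomorphic and 1-periodic on a neighborhood of $0\le\Im z\le\eta$ by the invertibility hypothesis. Then define the scalar function
\[
F(z)=\langle\phi,R(z)\phi\rangle .
\]
It is holomorphic and 1-periodic, so $\log|F|$ is subharmonic on the compact cylinder $\T\times[0,\eta]$, with value $-\infty$ allowed at zeros. I will bound $\log|rF|$ on the two boundary lines, use the two-lines (maximum) principle, and then read off the vertical derivative at $x_0$.

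\emph{Boundary bounds.} For real $x$, the operator $U(x)$ is unitary, so $R(x)$ is normal. Its spectrum lies in the image of the unit circle under $\lambda\mapsto(\lambda-1-r)^{-1}$, which is contained in the closed disk $|w|\le 1/r$, because $|\lambda-1-r|\ge r$ for $|\lambda|=1$. Hence $\|R(x)\|\le1/r$ and $\log|rF(x)|\le0$ on $\Im z=0$. On the top line, $|F(x+i\eta)|\le C$ gives $\log|rF|\le\log(Cr)<0$. The function $y\mapsto (y/\eta)\log(Cr)$ is harmonic and dominates $\log|rF|$ on both boundary lines. Periodicity makes the domain compact, so no Phragm\'en--Lindel\"of argument is needed, and the maximum principle gives
\[
\log|rF(x+iy)|\le\frac y\eta\log(Cr)\qquad(0\le y\le\eta).
\]

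\emph{Derivative at $x_0$.} Since $U(x_0)\phi=\phi$, we have $R(x_0)\phi=-\phi/r$. By normality, or directly from $U(x_0)^*\phi=\phi$, also $R(x_0)^*\phi=-\phi/r$. Hence $F(x_0)=-1/r$, so $\log|rF|$ attains the bound $0$ there and is harmonic near $x_0$. The displayed inequality then forces
\[
\partial_y\log|rF|(x_0)\le\frac1\eta\log(Cr).
\]
To compute the left side, use $F'=-\langle\phi,RU'R\phi\rangle=-r^{-2}\langle\phi,U'(x_0)\phi\rangle$ at $x_0$. Since $U(x_0)\phi=\phi$, this inner product equals $\langle\phi,U(x_0)^*U'(x_0)\phi\rangle$. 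Differentiating $U^*U=I$ on $\R$ shows $U^*U'$ is skew-adjoint, so this quantity equals $is$ with $s=\langle\phi,-iU(x_0)^*U'(x_0)\phi\rangle$ real. Then
\[
\partial_y\log|F|=\Re\frac{iF'}{F}=\Re\bigl(ir^{-1}\cdot is\bigr)=-\frac sr ,
\]
and combining gives $s\ge\frac r\eta\log\frac1{Cr}$, which is positive because $Cr<1$.

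The only delicate point is the sign bookkeeping in this last computation: which of $R(x_0)\phi$ and $R(x_0)^*\phi$ enters, the conjugate-linear convention for the inner product, and the Cauchy--Riemann identity $\partial_y=i\partial_z$. I would check it on the scalar example $U(x)=e^{2\pi ix}$ with $\phi=1$ and $x_0=0$, where $s=2\pi>0$.
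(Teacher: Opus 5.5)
Your argument is correct and is essentially the paper's proof. Both apply the three-lines (harmonic-majorant) principle on $\T\times[0,\eta]$ between the bound $1/r$ on the unitary line and $C$ on the top line, use that the fixed vector $\phi$ saturates the bound at $x_0$, and read off the one-sided $y$-derivative at $y=0$. The only difference is bookkeeping: you work with the single coefficient $F(z)=\langle\phi,R(z)\phi\rangle$ and differentiate $\log|F|$ via Cauchy--Riemann (using $R(x_0)^*\phi=-\phi/r$), whereas the paper bounds the full norm $\|(U(x+iy)-(1+r)I)^{-1}\|$ and differentiates $\|(U(x_0+iy)-(1+r)I)\phi\|$.
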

\begin{proof}
By continuity and 1-periodicity, $(U(z)-(1+r)I)^{-1}$ is uniformly bounded on the closed strip $0\le\Im z\le\eta$. Since $U(x)$ is unitary for $x\in\R$, we have $\|(U(x)-(1+r)I)^{-1}\|\le 1/r$ on $\Im z=0$. Applying Hadamard's three-lines theorem \cite[Theorem~12.8]{Rud87} to the bounded scalar functions $z\mapsto\langle\psi_1,(U(z)-(1+r)I)^{-1}\psi_2\rangle$, holomorphic on a neighborhood of $0\le\Im z\le\eta$, and taking the supremum over unit vectors $\psi_1,\psi_2$ gives
\[
\|(U(x+iy)-(1+r)I)^{-1}\|\le r^{-1+y/\eta}C^{y/\eta}, \quad 0\le y\le\eta.
\]
For a unit vector $\phi$ with $U(x_0)\phi=\phi$, this gives
\begin{equation}\label{eq306vcbb}
\|(U(x_0+iy)-(1+r)I)\phi\|\ge r^{1-y/\eta}C^{-y/\eta}, \quad 0\le y\le\eta,
\end{equation}
where both sides equal $r>0$ at $y=0$. Since $\partial_y U(x_0)=iU'(x_0)$, $U(x_0)\phi=\phi$, and $-iU(x_0)^*U'(x_0)$ is self-adjoint by unitarity of $U(x)$ on $\R$, differentiating the norm of $(U(x_0+iy)-(1+r)I)\phi=-r\phi+iy\,U'(x_0)\phi+O(y^2)$ at $y=0^+$ in \eqref{eq306vcbb} yields
\[
\langle\phi,-iU(x_0)^*U'(x_0)\phi\rangle
=\left.\frac{d}{dy}\right|_{y=0}\|(U(x_0+iy)-(1+r)I)\phi\|
\ge\frac r\eta\log\frac1{Cr}>0.
\]
\end{proof}

\subsection{Scattering representation and proof of Proposition~\ref{prop32cnnc}}

Using the transfer-to-scattering conversion of \cite[Proposition~2]{MSB13}, with our ordering of the channels, we associate to the Cayley matrix $\widehat A=\left(\begin{smallmatrix}a&b\\c&e\end{smallmatrix}\right)$ in \eqref{eq202cyy} the scattering matrix
\begin{equation}\label{eq37sctt}
S(z)=\frac1{a(z)}\begin{pmatrix}1&-b(z)\\c(z)&1\end{pmatrix}.
\end{equation}
On a strip $|\Im z|<h$ where $a$ is nonvanishing, $S(z)$ is holomorphic, and for $x\in\T$ the $\SU(1,1)$ relations imply that $S(x)$ is unitary. Moreover, for $\xi=(u,w)^T$ and $\xi_+=(u_+,w_+)^T$, the transfer relation $\xi_+=\widehat A\xi$ is equivalent to the scattering relation
\begin{equation}\label{eq308lcsc}
\begin{pmatrix}u\\w_+\end{pmatrix}=S\begin{pmatrix}u_+\\w\end{pmatrix}.
\end{equation}
We also introduce the complex spectral deformation
\[
A^{(t)}(z)=\exp(tJ/2)A(z)\exp(tJ/2),\qquad t\in\C,
\]
whose Cayley matrix has entries $e^{-it}a,b,c,e^{it}e$ and whose associated scattering matrix is simply
\begin{equation}\label{eq39stdf}
S^{(t)}(z)=e^{it}S(z).
\end{equation}

\begin{lemma}\label{lem344rcc}
There exist $\eta>0$ and $s_0>0$ such that $(\alpha,A^{(is)}_y)$ is uniformly hyperbolic for all $0<s\le s_0$ and $0\le y\le\eta$. Moreover, at $y=\eta$, the exponential dichotomy constants of $(\alpha,A^{(is)}_\eta)$ are uniform in $0\le s\le s_0$.
\end{lemma}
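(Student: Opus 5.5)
The plan is to combine three ingredients: strict disk contraction on the real circle produced by the deformation, persistence of uniform hyperbolicity at the fixed height $\eta$, and a convexity/quantization argument that fills in the intermediate heights. Throughout, fix $\eta\in(0,h/2)$ as in \eqref{eq208aff}, so that $(\alpha,A_y)$ is uniformly hyperbolic for every $y\in(0,\eta]$.

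\emph{Step 1: the real circle.} For $t=is$ the Cayley matrix of $A^{(is)}$ is $D_s\widehat A D_s$ with $D_s=\diag(e^{s/2},e^{-s/2})$. This follows from the entries $e^{-it}a,b,c,e^{it}e$ recorded before \eqref{eq39stdf}. Under \eqref{eq23mbb}, $D_s$ acts by $\zeta\mapsto e^{-s}\zeta$. For $x\in\T$, the matrix $\widehat A(x)\in\SU(1,1)$ preserves $\overline\D$. Hence for $s>0$ the composition $D_s\widehat A(x)D_s$ maps $\overline\D$ into $e^{-s}\overline\D\subset\D$, uniformly in $x$. By the disk-contraction criterion of Section~\ref{sec23hyp}, $(\alpha,A^{(is)}_0)$ is uniformly hyperbolic. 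Its unstable section $m_u$ takes values in $\D$. The unstable multiplier is, up to a factor $e^{\pm s}$ and a coboundary, $a+b\,m_u'$ with $|b\,m_u'|<|a|$ on $\T$, where $m_u'$ also lies in $\D$. So it is homotopic through nonvanishing loops to $a$, and \eqref{eq206wdll} gives $\omega(\alpha,A^{(is)}_0)=-\wind a=d$. By openness of uniform hyperbolicity and continuity in $y$, the same holds on a collar $0\le y<\delta_s$.

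\emph{Step 2: the top height.} The cocycle $(\alpha,A_\eta)$ is uniformly hyperbolic, and $s\mapsto A^{(is)}_\eta$ is continuous in the sup norm. Openness of uniform hyperbolicity therefore gives some $s_0>0$ with the following properties for all $0\le s\le s_0$:
\begin{itemize}
\item $(\alpha,A^{(is)}_\eta)$ is uniformly hyperbolic, with dichotomy constants uniform in $s$ (the invariant sections depend continuously on $s$ via the graph transform);
\item the winding of the unstable multiplier is locally constant, so $\omega(\alpha,A^{(is)}_\eta)=\omega(\alpha,A_\eta)=d$.
\end{itemize}
This already gives the second assertion of the lemma.

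\emph{Step 3: intermediate heights.} Fix $s\in(0,s_0]$. Then $y\mapsto L(\alpha,A^{(is)}_y)$ is convex on $[0,\eta]$, by subharmonicity as in Proposition~\ref{prop21lww} (this uses only holomorphy in $z$, not reality).
\begin{itemize}
\item Near $y=0$, Step~1 shows it is affine with slope $2\pi\omega(\alpha,A^{(is)}_0)=2\pi d$.
\item Near $y=\eta$, Step~2 shows it is affine with slope $2\pi d$.
\end{itemize}
A convex function with equal slopes at both ends is affine on the whole interval. Hence $L(\alpha,A^{(is)}_y)=L(\alpha,A^{(is)}_0)+2\pi dy$ on $[0,\eta]$. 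This is positive, because uniform hyperbolicity at $y=0$ gives $L(\alpha,A^{(is)}_0)>0$. Thus at every $y\in(0,\eta)$ the exponent is positive and locally affine. Avila's regularity theorem \cite[Theorem~6]{Av15}, which applies to $\SL(2,\C)$ cocycles, then yields uniform hyperbolicity there. Together with the endpoints $y=0$ and $y=\eta$ handled in Steps~1 and~2, this covers all $0\le y\le\eta$.

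\emph{Main obstacle.} The delicate point is the identification of the slopes at both ends, especially at $y=0$. There I must check that the holomorphic unstable frame of the deformed cocycle, obtained from disk contraction, has multiplier winding $-d$ with the sign convention of \eqref{eq206wdll}. I must also check that the affine behaviour on the collar is genuinely one-sided at $y=0^+$, since $A^{(is)}$ is not real and $y\mapsto L$ need not be even. I will verify this by writing the multiplier explicitly in the frame $(1,m_u)^T$ and comparing it with $a$ via a homotopy $\tau\mapsto a+\tau b\,m_u'$, $\tau\in[0,1]$. This homotopy stays nonvanishing because $|m_u'|<1$ and $|b|<|a|$ on $\T$.
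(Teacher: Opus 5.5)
Your proposal is correct and follows the paper's proof essentially step for step. Both arguments use openness of uniform hyperbolicity at the fixed height $\eta$, which gives acceleration $d$ and uniform dichotomy constants for $0\le s\le s_0$. Both then show strict contraction of $\overline\D$ on the real circle for $s>0$, with unstable multiplier homotopic to $a$ through nonvanishing loops. The conclusion in each case is convexity with slope $2\pi d$ at both ends, followed by Avila's regularity theorem at the interior heights.

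The differences are minor. The paper writes the multiplier exactly as $\lambda=e^sa+bm$, so no coboundary enters. It also shrinks $\eta$ so that $a$ is zero-free on $|\Im z|\le\eta$. That extra condition is not needed for the lemma itself, but it is used afterwards in the proof of Proposition~\ref{prop32cnnc}.
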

\begin{proof}
Fix $\eta\in(0,h)$ in the affine interval~\eqref{eq208aff} small enough that $a$ has no zeros on $|\Im z|\le\eta$. By openness of uniform hyperbolicity, there exists $s_0>0$ such that $(\alpha,A^{(is)}_\eta)$ remains uniformly hyperbolic with uniform dichotomy constants and acceleration $d$ for all $s\in[0,s_0]$.

For each $s\in(0,s_0]$ and $x\in\T$, the Cayley matrix $\widehat A^{(is)}(x)$ maps $\overline\D$ strictly into $\D$, because $\exp(isJ/2)$ acts projectively as multiplication by $e^{-s}<1$ while $\widehat A(x)$ preserves $\D$. Thus $A^{(is)}$ is uniformly hyperbolic at $y=0$, with unstable section $(1,m)^T$ ($|m|<1$) and unstable multiplier $\lambda=e^sa+bm$. Since $|e^sa|>|b|\ge|bm|$ on $\T$, the homotopy $e^sa+\tau bm$ ($0\le\tau\le1$) is nonvanishing on $\T$. Thus $\wind\lambda=\wind a=-d$ and $\omega(\alpha,A^{(is)})=d$. Because $y\mapsto L(\alpha,A^{(is)}_y)$ is convex and has slope $2\pi d$ at both $y=0$ and $y=\eta$, it is affine with positive values throughout $[0,\eta]$. In particular, at every interior height $y_0\in(0,\eta)$, $t\mapsto L(\alpha,A^{(is)}_{y_0+t})$ is affine in a two-sided neighborhood of $t=0$ with $L(\alpha,A^{(is)}_{y_0})>0$, so Avila's regularity theorem \cite[Theorem~6]{Av15} ensures that $(\alpha,A^{(is)}_{y_0})$ is uniformly hyperbolic as well.
\end{proof}

The next lemma derives a resolvent bound from the exponential dichotomy, uniformly over covariant representations.

\begin{lemma}\label{lem35repp}
Let $W$ be a unitary operator on a Hilbert space $\mathcal K$ equipped with a unital $*$-representation $f\mapsto M_f$ of $C(\T)$ satisfying $WM_fW^{-1}=M_{f\circ T}$. We use the same notation for the componentwise extension of $W$ and the entrywise extension of the representation to matrix-valued functions. If $D\in C(\T,\SL(2,\C))$ is uniformly hyperbolic, then $W-M_D$ is invertible on $\mathcal K\otimes\C^2$, with $\|(W-M_D)^{-1}\|$ bounded solely in terms of the continuous invariant frame and exponential dichotomy constants of $(\alpha,D)$, independently of $(\mathcal K,W,M)$.
\end{lemma}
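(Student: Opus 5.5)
Write the uniformly hyperbolic splitting via the invariant frame: by the discussion in Section~\ref{sec23hyp}, with $V:\T\to\SL(2,\C)$ continuous (here I only need continuity) we have $D(x)V(x)=V(Tx)\diag(\lambda(x),\lambda(x)^{-1})$, i.e.\ $V(Tx)^{-1}D(x)V(x)=\Lambda(x)$ is diagonal. The plan is to conjugate $W-M_D$ by the bounded invertible operators $M_V$ and reduce to two scalar problems. Covariance gives $M_{V\circ T}^{-1}W=W M_V^{-1}$, so
\[
M_{V\circ T}^{-1}(W-M_D)M_V=W-M_{\Lambda}.
\]
Here $\|M_V^{\pm1}\|\le\|V^{\pm1}\|_\infty$, because a unital $*$-representation of $C(\T)$ is contractive, and its entrywise matrix extension is then contractive on $C(\T,M_2(\C))$, being a $*$-homomorphism of $C^*$-algebras. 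It therefore suffices to invert $W-M_\lambda$ and $W-M_{\lambda^{-1}}$ on $\mathcal K$ with bounds depending only on the dichotomy constants.

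For the unstable scalar problem, write $W-M_\lambda=-M_\lambda(I-M_{\lambda}^{-1}W)$ and invert by a Neumann series $\sum_{n\ge0}(M_\lambda^{-1}W)^n$. Repeated use of $WM_fW^{-1}=M_{f\circ T}$ gives
\[
(M_\lambda^{-1}W)^n=M_{g_n}W^n,\qquad g_n=\prod_{k=0}^{n-1}(\lambda\circ T^{-k-1})^{-1},
\]
up to an index shift that I will fix carefully. The dichotomy gives $|\lambda_n(x)|\ge c\,\kappa^{n}$ uniformly with $\kappa>1$, where $\lambda_n$ is the multiplier product; the constants $c,\kappa$ come from the dichotomy constants together with $\|V^{\pm1}\|_\infty$. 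Hence $\|M_{g_n}\|\le\|g_n\|_\infty\le c^{-1}\kappa^{-n}$, and $W^n$ is unitary, so the series converges with norm $\le c^{-1}(1-\kappa^{-1})^{-1}\|\lambda^{-1}\|_\infty$. The stable problem is handled the same way: write $W-M_{\lambda^{-1}}=W(I-W^{-1}M_{\lambda^{-1}})$ and expand, with $(W^{-1}M_{\lambda^{-1}})^n=W^{-n}M_{\lambda_n^{-1}}$, whose norm is $\le c^{-1}\kappa^{-n}$.

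The main obstacle is only bookkeeping. First, the index conventions in the commutation $M_fW^n=W^nM_{f\circ T^n}$ have to be gotten right. Second, I must confirm that the exponential dichotomy constants control $\sup_x|\lambda_n(x)|^{-1}$ uniformly; this holds because $\lambda_n(x)$ equals the expansion of the unit vector field $V(x)e_1$ normalized through $V$, so a bound with constants involving $\|V\|_\infty\|V^{-1}\|_\infty$ follows. The resulting bound $\|(W-M_D)^{-1}\|\le\|V\|_\infty\|V^{-1}\|_\infty\max(\text{two Neumann bounds})$ depends only on the frame and the dichotomy constants, and not on $(\mathcal K,W,M)$.
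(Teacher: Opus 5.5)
Your proposal is correct and is essentially the paper's proof: you conjugate by $M_V$ and $M_{V\circ T}$ to reduce to $W-M_{\diag(\lambda,\lambda^{-1})}$, then invert the two scalar pieces by Neumann series in $M_{\lambda^{-1}}W$ and $W^{-1}M_{\lambda^{-1}}$, and you also make explicit the contractivity bound $\|M_V^{\pm1}\|\le\|V^{\pm1}\|_\infty$, which the paper leaves implicit. On the index point you flagged, the correct identity is $(M_{\lambda^{-1}}W)^n=M_{\lambda_n^{-1}}W^n=W^nM_{g_n}$ with your $g_n=\lambda_n^{-1}\circ T^{-n}$; since $T$ is a bijection of $\T$, the bound $\sup_x|\lambda_n(x)|^{-1}$ is the same either way.
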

\begin{proof}
Since complex line bundles over $\T$ are trivial, the invariant splitting admits a continuous frame $V\in C(\T,\SL(2,\C))$ diagonalizing $D$: $D(x)V(x)=V(Tx)\diag(\lambda(x),\lambda(x)^{-1})$. The unstable multipliers $\lambda_n(x)=\prod_{j=0}^{n-1}\lambda(T^j x)$ satisfy $|\lambda_n(x)|\ge ce^{\gamma n}$ for $n\ge1$. Then
\[
(W-M_D)M_V=M_{V\circ T}\bigl(W-M_{\diag(\lambda,\lambda^{-1})}\bigr).
\]
On the expanding and contracting components, we invert via the convergent Neumann series
\[
\begin{gathered}
(W-M_\lambda)^{-1}=-\sum_{n=0}^\infty (M_{\lambda^{-1}}W)^n M_{\lambda^{-1}},\\
(W-M_{\lambda^{-1}})^{-1}=\sum_{n=0}^\infty (W^{-1}M_{\lambda^{-1}})^n W^{-1},
\end{gathered}
\]
since $\|(M_{\lambda^{-1}}W)^n\|\le\sup_x|\lambda_n(x)|^{-1}\le c^{-1}e^{-\gamma n}$ for $n\ge1$, and similarly for $(W^{-1}M_{\lambda^{-1}})^n$.
\end{proof}

\begin{proof}[Proof of Proposition~\ref{prop32cnnc}]
By Lemma~\ref{lem31hbb}, it suffices to prove positivity of the signedness pairing for every invariant positive matrix measure $\mu$ normalized by $\tr\mu(\T)=1$. Its trace measure $\nu$ is a probability measure, and Lemma~\ref{lem302rkk} gives $P=vv^T$ $\nu$-almost everywhere for a measurable unit vector field $v:\T\to\R^2$. The weight in~\eqref{eq34dnns} therefore satisfies $r_\mu(x)=\|A(x)v(x)\|^2$, with $0<\|A^{-1}\|_\infty^{-2}\le r_\mu(x)\le\|A\|_\infty^2$ $\nu$-almost everywhere. By \eqref{eq34dnns}, $\nu$ is quasi-invariant under $T$ and $v(Tx)=\sigma(x)r_\mu(x)^{-1/2}A(x)v(x)$ for a measurable sign $\sigma:\T\to\{1,-1\}$ and $\nu$-almost every $x$. On $\mathcal K=L^2(\nu)$, the operator
\[
(Wf)(x)=\sigma(x)\sqrt{r_\mu(x)}\,f(Tx),\qquad
(W^{-1}g)(x)=\frac{\sigma(T^{-1}x)}{\sqrt{r_\mu(T^{-1}x)}}\,g(T^{-1}x),
\]
is unitary by \eqref{eq34dnns}, satisfies $WM_fW^{-1}=M_{f\circ T}$, and obeys $Wv=Av$ componentwise. Here $M_f$ acts by multiplication by $f$ on $\mathcal K$, and $M_D$ acts by pointwise matrix multiplication on $\mathcal K\otimes\C^2$. On $\mathcal K\otimes\C^2$, define the unitary operators $P_{\rm in}(u,w)=(Wu,w)$ and $P_{\rm out}(u,w)=(u,Ww)$, and set
\[
U_\mu(z)=P_{\rm out}^{-1}M_{S(\cdot+z)}P_{\rm in}.
\]
Because $W$ and $\nu$ are independent of $z$, $z\mapsto U_\mu(z)$ is holomorphic in the operator norm on $|\Im z|<h$ and unitary for $z\in\R$. Moreover, for the unit vector $\xi=\Q v\in\mathcal K\otimes\C^2$, the transfer equation $W\xi=\widehat A\xi$ is equivalent by \eqref{eq308lcsc} to $U_\mu(0)\xi=\xi$.

We claim that the boundary phase derivative at $\xi$ is given by
\begin{equation}\label{eq311actn}
\langle\xi,-iU_\mu(0)^*U_\mu'(0)\xi\rangle=\int_\T\tr(Q_A\,d\mu).
\end{equation}
Indeed, we hold $\xi=(u,w)^T$ fixed at each point and differentiate only the matrix coefficients in $\binom{u}{w_+}=S(x)\binom{u_+}{w}$ with respect to $x\in\T$, with $\xi_+=\widehat A(x)\xi=(u_+,w_+)^T$. Unitarity of $S(x)$ gives
\[
\begin{aligned}
\begin{pmatrix}u_+\\w\end{pmatrix}^* S(x)^*S'(x)\begin{pmatrix}u_+\\w\end{pmatrix}
&=\bar w_+w_+'-\bar u_+u_+'\\
&=-\xi_+^*\diag(1,-1)\widehat A'(x)\xi\\
&=i\,v(x)^TQ_A(x)v(x),
\end{aligned}
\]
where the last step uses $\Q^*(i\diag(1,-1))\Q=-J$. Multiplying by $-i$ and integrating against $d\nu$ yields \eqref{eq311actn}.

We next transfer the dichotomy bound to the resolvent of the scattering family. For $0\le\Im z\le\eta$ and $0<s\le s_0$ as in Lemma~\ref{lem344rcc}, a direct matrix verification from \eqref{eq37sctt}, \eqref{eq39stdf}, and $ae-bc=1$ yields the operator factorization
\begin{equation}\label{eq312fctt}
W-M_{\widehat A^{(is)}(\cdot+z)}
=-e^{-s}M_{\left(\begin{smallmatrix}-e^sa&0\\-c&1\end{smallmatrix}\right)(\cdot+z)}P_{\rm out}\bigl(U_\mu(z)-e^sI\bigr).
\end{equation}
Set
\[
K_s(z)=\begin{pmatrix}-e^sa(z)&0\\-c(z)&1\end{pmatrix},\qquad
\mathcal D_{s,z}=W-M_{\widehat A^{(is)}(\cdot+z)}.
\]
Since $a$ is nonvanishing on $0\le\Im z\le\eta$, the matrix multiplier $M_{K_s(\cdot+z)}$ is invertible with uniformly bounded norm and inverse norm. Lemmas~\ref{lem344rcc} and~\ref{lem35repp} show that $\mathcal D_{s,z}$ is invertible throughout this strip for $0<s\le s_0$. Inverting \eqref{eq312fctt} gives
\[
(U_\mu(z)-e^sI)^{-1}
=-e^{-s}\mathcal D_{s,z}^{-1}M_{K_s(\cdot+z)}P_{\rm out},
\]
and hence
\[
\|(U_\mu(z)-e^sI)^{-1}\|
\le e^{-s}\|\mathcal D_{s,z}^{-1}\|
\sup_{t\in\T}\|K_s(t+z)\|.
\]
At the upper boundary, the factorization and inverse formula also hold for $s=0$. The cocycles $A^{(is)}(\cdot+\theta+i\eta)$, parametrized by $(s,\theta)\in[0,s_0]\times\T$, form a compact family of uniformly hyperbolic cocycles. Local choices of invariant frames and a finite subcover give common frame bounds and dichotomy constants. Lemma~\ref{lem35repp}, the uniform bound for $K_s$, and the unitarity of $W$ and $P_{\rm out}$ therefore give a constant $C>0$, independent of $\mu$ and $s$, such that
\[
\sup_{x\in\T}\|(U_\mu(x+i\eta)-e^sI)^{-1}\|\le C, \quad 0\le s\le s_0.
\]
Choosing $s\in(0,s_0]$ small enough that $r=e^s-1$ satisfies $Cr<1$, Lemma~\ref{lem33rsvv} and~\eqref{eq311actn} give
\[
\int_\T\tr(Q_A\,d\mu)\ge\frac r\eta\log\frac1{Cr}>0
\]
for every normalized invariant positive matrix measure $\mu$. Applying Lemma~\ref{lem31hbb} with $F=Q_A$ then yields a real-analytic $H$ satisfying \eqref{eq303pst}.
\end{proof}

\section{Quarter-turn monodromy}\label{sec4mnnd}

Integrating the positive connection of Proposition~\ref{prop32cnnc} gives a monotonic cocycle on the universal cover $\R$. We choose the connection so that the gauge has quarter-turn monodromy, which will be removed in Section~\ref{sec6fnn}.

\begin{lemma}\label{lem41gga}
Let $B:\R\to\SL(2,\R)$ be differentiable and set
\[
H=B^T(-J)B',\qquad C(x)=B(Tx)A(x)B(x)^{-1}.
\]
Then $H$ is symmetric and
\begin{equation}\label{eq41ggid}
B(x)^TQ_C(x)B(x)=Q_A(x)+A(x)^TH(Tx)A(x)-H(x).
\end{equation}
Conversely, given any 1-periodic $H\in C^\omega(\T,\Sym_2\R)$ and $B(0)\in\SL(2,\R)$, the solution $B:\R\to\SL(2,\R)$ of $B'=BJH$ is real analytic and satisfies \eqref{eq41ggid} and
\[
B(x+1)=MB(x), \quad x\in\R,
\]
for a constant monodromy matrix $M\in\SL(2,\R)$.
\end{lemma}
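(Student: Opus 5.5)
The plan is to verify the identity \eqref{eq41ggid} by direct differentiation and then to treat the converse as a linear ODE with periodic coefficients. First, symmetry of $H$: since $B\in\SL(2,\R)$, differentiating $B^TJB=J$ gives $B'^TJB+B^TJB'=0$. Hence $(B^TJB')^T=B'^TJ^TB=-B'^TJB=B^TJB'$, so $B^TJB'$ is symmetric and so is $H$. The same computation shows that $Q_D=-D^TJD'$ is symmetric for any $\SL(2,\R)$-valued $D$.

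For \eqref{eq41ggid} I would use the product rule for the map $Q$. For products $D=D_2D_1$ of $\SL(2,\R)$-valued maps, one has $Q_{D_2D_1}=Q_{D_1}+D_1^TQ_{D_2}D_1$; this follows from $D_2^TJD_2=J$, exactly as in \eqref{eq21pqq}. Also $Q_{B^{-1}}=-B^{-T}Q_BB^{-1}$, obtained by applying the product rule to $B^{-1}B=I$. Finally $Q_{B\circ T}(x)=Q_B(Tx)$, since $T$ is a translation. Write $C=(B\circ T)AB^{-1}$. Applying the product rule twice gives
\[
Q_C=Q_{B^{-1}}+B^{-T}\bigl(Q_A+A^TQ_B(T\cdot)A\bigr)B^{-1}.
\]
Conjugating by $B$ yields $B^TQ_CB=-Q_B+Q_A+A^TQ_B(T\cdot)A$. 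Since $Q_B=-B^TJB'=H$, this is \eqref{eq41ggid}.

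For the converse, $B'=BJH$ is a linear ODE with real-analytic coefficients on $\R$, so a unique global real-analytic solution exists. Since $\tr(JH)=0$ for symmetric $H$, Liouville's formula gives $\det B\equiv\det B(0)=1$, so $B$ takes values in $\SL(2,\R)$. Moreover $B^T(-J)B'=-B^TJBJH=-J\cdot JH=H$, using $B^TJB=J$ and $J^2=-I$. Thus $H$ is recovered from $B$, and \eqref{eq41ggid} holds by the first part.

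For the monodromy, both $x\mapsto B(x+1)$ and $B$ solve the same right-invariant equation $X'=XJH$, because $H$ is $1$-periodic. The map $x\mapsto B(x+1)B(x)^{-1}$ has derivative $B(x+1)JHB^{-1}-B(x+1)B^{-1}B'B^{-1}=0$, so it is a constant $M=B(1)B(0)^{-1}\in\SL(2,\R)$. No step is a real obstacle; the only point requiring care is the sign bookkeeping in $Q_{B^{-1}}$ and in $H=Q_B$.
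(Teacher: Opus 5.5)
Your proposal is correct and follows the paper's own proof. The identity \eqref{eq41ggid} comes from differentiating $C=(B\circ T)AB^{-1}$ using $D^TJD=J$, and your product rule $Q_{D_2D_1}=Q_{D_1}+D_1^TQ_{D_2}D_1$ is the same computation as \eqref{eq21pqq}; symmetry of $H$ comes from differentiating $B^TJB=J$; and the converse uses $\tr(JH)=0$ to stay in $\SL(2,\R)$ and compares the solutions $B(\cdot+1)$ and $B$ of the same equation to get $M=B(1)B(0)^{-1}$, with your sign bookkeeping ($Q_B=H$, $Q_{B^{-1}}=-B^{-T}Q_BB^{-1}$) checking out.
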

\begin{proof}
Differentiating $C=(B\circ T)AB^{-1}$ and using $D^TJD=J$ for $D\in\SL(2,\R)$ gives \eqref{eq41ggid}, while differentiating $B^TJB=J$ shows that $H=B^T(-J)B'$ is symmetric. Conversely, since $\tr(JH)=0$, the linear differential equation $B'=BJH$ preserves $\SL(2,\R)$ and recovers $B^T(-J)B'=H$. When $H$ is 1-periodic, $B(x+1)$ and $B(x)$ solve the same differential equation, so $B(x+1)=MB(x)$ with $M=B(1)B(0)^{-1}\in\SL(2,\R)$.
\end{proof}

Recall \cite[Section~I.1]{dMvS93} that every element of the universal cover $\widetilde{\SL}(2,\R)$ acts on the lifted projective circle $\R$ with a well-defined translation number. We normalize this coordinate so that a full rotation by $2\pi$ shifts it by $1$; the projective circle then has period $1/2$. The translation number is $\rho(\widetilde g)=\lim_{n\to\infty}(\widetilde g^n(t)-t)/n$, which is continuous, conjugacy-invariant, and satisfies $\rho(z^k\widetilde g)=\rho(\widetilde g)+k$ for the generator $z$ of the deck group of $\widetilde{\SL}(2,\R)\to\SL(2,\R)$ corresponding to a full turn; this element is central. Hyperbolic and parabolic elements of $\widetilde{\SL}(2,\R)$ have translation numbers in $\frac12\Z$. For an elliptic matrix conjugate in $\SL(2,\R)$ to $R_\theta$, its lifted translation number is congruent to $\theta$ modulo $\Z$. Thus any element with $\rho(\widetilde g)\in\Z+\frac34$ projects to an elliptic matrix conjugate to the quarter-turn rotation $R_{-1/4}$.

\begin{lemma}\label{lem402qtt}
Suppose $d=\deg A>0$ and $H\in C^\omega(\T,\Sym_2\R)$ satisfies~\eqref{eq303pst}. Then there exists a real-analytic map $B_0:\R\to\SL(2,\R)$ such that
\begin{equation}\label{eq403qrt}
B_0(x+1)=R_{-1/4}B_0(x), \quad x\in\R,
\end{equation}
and $C=(B_0\circ T)AB_0^{-1}$ is strictly positively monotonic. In particular, both $B_0$ and $C$ are 4-periodic.
\end{lemma}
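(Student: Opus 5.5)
The approach is to realize the monodromy through a deformation of the positive connection, using the translation number as a continuous invariant. By Lemma~\ref{lem41gga}, any 1-periodic $H$ satisfying~\eqref{eq303pst} yields, via $B'=BJH$, a real-analytic $B:\R\to\SL(2,\R)$ with $B(x+1)=MB(x)$ and $B^TQ_CB=\FF(H)-H>0$. Hence $C=(B\circ T)AB^{-1}$ is strictly positively monotonic. The set of such $H$ is convex: positivity of $\FF(H)-H=Q_A+\TT H-H$ is preserved under convex combinations, because $H\mapsto\TT H-H$ is linear. The open cone condition is also stable under adding small $H$. So the plan is to move $H$ within this convex set, tracking the lifted monodromy $\widetilde M\in\widetilde{\SL}(2,\R)$, until its translation number lies in $\Z+\frac34$.

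First, I fix $B(0)=I$ and define $\widetilde M$ as the endpoint of the path $x\mapsto B(x)$, $x\in[0,1]$, lifted to $\widetilde{\SL}(2,\R)$. Since solutions of the linear ODE depend continuously on $H$ in $C(\T,\Sym_2\R)$, the map $H\mapsto\rho(\widetilde M_H)$ is continuous on the convex (hence connected) set $\mathcal P=\{H:\FF(H)-H>0\}$. Its image is an interval. It therefore suffices to show that $\rho(\widetilde M_H)$ is unbounded above or below on $\mathcal P$, so that the interval has length greater than~$1$ and meets $\Z+\frac34$.

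To get a large range, I add to a fixed $H_0\in\mathcal P$ the connection of a $1$-periodic real-analytic gauge $G$. Gauge changes act on connections by the cocycle rule: replacing $B$ by $GB$ changes the connection by $B^T(-J)(G^{-1}G')B$ plus a symmetric term. Concretely, take $B_\lambda(x)=R_{kx}B_{H_0}(x)$ for $k\in\Z$. This is again of the form in Lemma~\ref{lem41gga}, with $H_k=H_0+kB^T(2\pi)B$ periodic, since $B^TB$ is periodic only up to the monodromy. Its lifted monodromy is $z^k\widetilde M_{H_0}$, so its translation number is $\rho_0+k$. The catch is that the new $C$ equals $R_{k(x+\alpha)}C_0R_{-kx}$, whose $Q$ is $Q_{C_0}$ plus $2\pi k(C_0^TC_0-I)$-type terms, and this need not stay positive. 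To avoid that, I would instead interpolate continuously, $B_t(x)=R_{tx}B_{H_0}(x)$ for $t\in[0,1]$. This changes the monodromy continuously, multiplying it by $R_t$ and shifting the translation number by exactly $t$ when the elements commute; in general, by the quasi-morphism bound, the shift is at least about $t-\frac12$. This suggests using a larger range of $t$ with $C_0$ close to a rotation.

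The main obstacle is preserving positivity while shifting the translation number by a full unit. I would first try to exploit the freedom of $B(0)$ and conjugation, which leaves $\rho$ unchanged, together with Lemma~\ref{lem21evmm}: replace $A$ by an iterate to make the positivity margin of $Q_{C_0}$ large relative to $C_0^TC_0-I$. Alternatively, I would scale $H_0$: if $H_0\in\mathcal P$, then so is $H_0+sH_1$ for $H_1$ with $\TT H_1-H_1\ge0$, for example $H_1=\TT^nG$-type averages. Failing this, the fallback is to use the deformation $A^{(t)}$ and continuity in $A$. Once $\rho(\widetilde M)\in\Z+\frac34$, the monodromy $M$ is conjugate to $R_{-1/4}$, say $M=SR_{-1/4}S^{-1}$. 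Replacing $B$ by $S^{-1}B$ gives~\eqref{eq403qrt} without affecting monotonicity. Four-periodicity of $B_0$ and $C$ follows from $R_{-1/4}^4=I$.
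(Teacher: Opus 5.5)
There is a genuine gap, and it sits exactly at the step you call the main obstacle. You never exhibit two elements of $\mathcal P=\{H:\FF(H)-H>0\}$ whose lifted monodromies have translation numbers at least $1$ apart, and none of the routes you sketch does so.

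The twist $B\mapsto R_{kx}B$ fails on two counts. First, its connection is $H_0+2\pi k\,B^TB$, which is not 1-periodic unless the monodromy $M$ is orthogonal. Indeed, $(R_{kx}B)(x+1)=(R_{kx}MR_{-kx})\,(R_{kx}B)(x)$, so there is no constant monodromy in the sense of Lemma~\ref{lem41gga}. Second, the new cocycle has $Q=R_{kx}\bigl(Q_{C_0}+2\pi k(C_0^TC_0-I)\bigr)R_{-kx}$. Here $C_0^TC_0-I$ is indefinite wherever $C_0\notin\SO(2,\R)$, so positivity is lost for large $|k|$. The version $R_{tx}B$ with $t\notin\Z$ is worse, since $R_{tx}$ is not even 1-periodic.

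Your other suggestions do not close the gap either. Passing to an iterate of $A$ changes the cocycle and the frequency rather than producing a gauge for $(\alpha,A)$. Adding $sH_1$ with $\TT H_1\ge H_1$, or deforming through $A^{(t)}$, comes with no computation of how $\rho$ moves. A telltale sign is that your argument never uses $d=\deg A>0$, which is precisely what supplies the unit shift.

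The missing idea is that $\mathcal P$ is invariant under $\FF$ itself: $\FF(\FF(H))-\FF(H)=\TT(\FF(H)-H)>0$, because $\TT$ acts pointwise by congruence with $A(x)$. Moreover, the gauge attached to $\FF(H)$ is explicit. Let $B_H'=B_HJH$ with $B_H(0)=I$, and set $\Psi(x)=B_H(x+\alpha)A(x)$. By \eqref{eq41ggid}, $\Psi^T(-J)\Psi'=Q_A+\TT H=\FF(H)$, so $B_{\FF(H)}=\Psi(0)^{-1}\Psi$.

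In $\widetilde{\SL}(2,\R)$, the degree-$d$ loop lifts with $\widetilde A(x+1)=z^d\widetilde A(x)$. Centrality of $z$ then gives $\widetilde\Psi(x+1)=z^d\widetilde M_H\widetilde\Psi(x)$. Hence the lifted monodromy of $\FF(H)$ is conjugate to $z^d\widetilde M_H$, and $\rho$ increases by exactly $d$.

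The segment $(1-s)H+s\FF(H)$, $s\in[0,1]$, lies in the convex set $\mathcal P$, and $d\ge1$. So continuity of $\rho$ yields some $s$ with $\rho\in\Z+\frac34$. From there, your concluding conjugation $B\mapsto S^{-1}B$ finishes the proof as you describe.
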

\begin{proof}
Set $H_0=H$ and $H_1=\FF(H_0)=Q_A+\TT H_0$. Since $\FF(H_1)-H_1=\TT(\FF(H_0)-H_0)>0$, every convex combination $H_s=(1-s)H_0+sH_1$ ($s\in[0,1]$) also satisfies $\FF(H_s)-H_s>0$. Let $\Phi_s:\R\to\SL(2,\R)$ be the solution of $\Phi_s'=\Phi_sJH_s$ with $\Phi_s(0)=I$, and let $\rho(s)$ be the translation number of the lifted monodromy $\widetilde\Phi_s(1)\in\widetilde{\SL}(2,\R)$.

The endpoint translation numbers satisfy
\begin{equation}\label{eq44shdd}
\rho(1)=\rho(0)+d.
\end{equation}
Indeed, by Lemma~\ref{lem41gga}, the path $\Psi(x)=\Phi_0(x+\alpha)A(x)$ satisfies $\Psi^T(-J)\Psi'=Q_A+\TT H_0=H_1$, so $\Phi_1(x)=\Psi(0)^{-1}\Psi(x)$. Lifting to $\widetilde{\SL}(2,\R)$ with $\widetilde\Phi_s(0)=e$, the loop $A$ satisfies $\widetilde A(x+1)=z^d\widetilde A(x)$ and $\widetilde\Phi_0(x+1)=\widetilde\Phi_0(1)\widetilde\Phi_0(x)$, where $z$ is the deck-group generator fixed above. Thus $\widetilde\Psi(x+1)=z^d\widetilde\Phi_0(1)\widetilde\Psi(x)$, and hence $\widetilde\Phi_1(1)=\widetilde\Psi(0)^{-1}z^d\widetilde\Phi_0(1)\widetilde\Psi(0)$. Conjugacy invariance and centrality of $z^d$ give~\eqref{eq44shdd}.

By continuity and~\eqref{eq44shdd}, the image $\rho([0,1])$ contains $[\rho(0),\rho(0)+d]$. Since $d\ge1$, this interval contains $k+\frac34$ for some $k\in\Z$. Fix $s\in[0,1]$ with $\rho(s)=k+\frac34$. Then the monodromy $\Phi_s(1)$ is conjugate in $\SL(2,\R)$ to $R_{-1/4}$: choosing $G\in\SL(2,\R)$ with $G\Phi_s(1)G^{-1}=R_{-1/4}$ and setting $B_0(x)=G\Phi_s(x)$, we obtain $B_0(x+1)=R_{-1/4}B_0(x)$ and $B_0^T(-J)B_0'=H_s$. Lemma~\ref{lem41gga} then ensures that $C=(B_0\circ T)AB_0^{-1}$ is strictly positively monotonic, and $R_{-1/4}^4=I$ gives 4-periodicity.
\end{proof}

\section{A Blaschke majorant near the boundary}

To untwist the quarter-turn monodromy of $B_0$, we establish a strip majorization lemma for bounded holomorphic functions with unimodular boundary values (see \cite[Chapters~I--II]{Gar07} for background on boundary behavior of bounded analytic functions).

\begin{lemma}\label{lem51mjj}
Let $f$ be a 1-periodic holomorphic function on the strip $0<\Im z<h$ with $|f|<1$. Suppose that its nontangential boundary values satisfy $|f(x)|=1$ for almost every $x\in\T$, and that $f$ does not extend holomorphically across the entire real circle $\T$. Then there exist $x_0\in\T$ and $\delta>0$ such that the single Blaschke factor
\begin{equation}\label{eq51blsk}
\beta(z)=\frac{e^{2\pi iz}-\mathfrak a}{1-\overline{\mathfrak a}e^{2\pi iz}},\qquad
\mathfrak a=e^{2\pi i x_0-2\pi\delta},
\end{equation}
satisfies
\begin{equation}\label{eq502mjj}
|f(x+iy)|<|\beta(x+iy)|<1
\qquad\text{for all }x\in\T,\ 0<y\le\delta/2.
\end{equation}
Moreover, $\beta$ is holomorphic and nonvanishing on a neighborhood of the closed strip $|\Im z|\le\delta/2$, with $|\beta(x)|=1$ and $\wind\beta=1$ on $\T$.
\end{lemma}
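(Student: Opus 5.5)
The plan is to pass to the annulus and compare two positive superharmonic functions near its outer circle. Put $q=e^{2\pi iz}$. The strip $0<\Im z<h$ becomes the annulus $\mathcal A=\{e^{-2\pi h}<|q|<1\}$, and $\T$ becomes $\partial\D$. Write $F(q)=f(z)$ and $u=-\log|F|$. Then $u$ is positive and superharmonic on $\mathcal A$ (equal to $+\infty$ at zeros), and its boundary values on $\partial\D$ vanish almost everywhere. The factor $\beta$ in~\eqref{eq51blsk} is $q\mapsto(q-\mathfrak a)/(1-\overline{\mathfrak a}q)$, a disk automorphism with $|\mathfrak a|=e^{-2\pi\delta}<1$, i.e.\ its zero sits at $z=x_0+i\delta$. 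This gives the last sentence of the lemma immediately: $|\beta|<1$ for $y>0$, $|\beta(x)|=1$ on $\T$, $\beta$ is zero-free for $\Im z<\delta$, and $\wind\beta=1$. Moreover $-\log|\beta(q)|=g_\D(q,\mathfrak a)$ is the Green function of $\D$. Thus~\eqref{eq502mjj} amounts to
\[
u(q)>g_\D(q,\mathfrak a)\qquad\text{for } e^{-\pi\delta}\le|q|<1 .
\]

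\textbf{Step 1: lower bound for $u$ and a singular point.} By the Riesz decomposition of $u$ on a subannulus adjacent to $\partial\D$ (the canonical factorization of bounded analytic functions, cf.~\cite{Gar07}), I will write
\[
u(q)\ge\int_{\partial\D}P_\D(q,\zeta)\,d\sigma(\zeta)+\sum_j g_\D(q,q_j)-O(1-|q|)\cdot 0+\kappa(1-|q|)
\]
for $|q|$ close to $1$. Here $\sigma\ge0$ is the singular boundary measure, $q_j$ are the zeros of $F$ near $\partial\D$, and $\kappa>0$ comes from the minimum of $u$ on an interior circle together with the maximum principle (a harmonic-measure comparison). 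All terms are nonnegative, so any of them may be discarded.

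If $\sigma=0$ and the zeros do not accumulate on $\partial\D$, then $F$ extends across $\partial\D$ by Schwarz reflection, contradicting the hypothesis. So one of the following holds:
\begin{enumerate}
\item[(a)] $\sigma\ne0$. Since $\sigma$ is singular, at $\sigma$-almost every $\zeta_0=e^{2\pi ix_0}$ we have $\sigma(I_\delta)/\delta\to\infty$, where $I_\delta$ is the arc of length $\delta$ about $\zeta_0$. Choose $\delta$ so that $\sigma(I_\delta)\ge M\delta$ for a large constant $M$.
\item[(b)] Zeros $q_j$ accumulate on $\partial\D$. Pick one zero $q_j$ very close to $\partial\D$, let $x_0=\arg q_j/2\pi$, and set $\delta=(1-|q_j|)/M'$ for a large $M'$. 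For $1-|q|\lesssim\delta$, the term $g_\D(q,q_j)$ dominates $c(1-|q_j|)P_\D(q,\zeta_0)$. This behaves like an atom of mass $\asymp M'\delta$ in the Poisson integral, so the situation reduces to~(a).
\end{enumerate}

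\textbf{Step 2: comparison in the collar.} For $|\mathfrak a|=e^{-2\pi\delta}$ and $1-|q|\le\pi\delta$, the elementary Green estimate gives
\[
g_\D(q,\mathfrak a)\le C_0\,\frac{\delta\,(1-|q|)}{|1-\overline{\mathfrak a}q|^2},
\]
with $C_0$ absolute. On the $f$ side I keep only the part of the measure inside $I_\delta$ (respectively the atom). For $\zeta\in I_\delta$ we have $|\zeta-q|\le\delta+|\zeta_0-q|\lesssim|1-\overline{\mathfrak a}q|$, and therefore
\[
u(q)\ge c\,\frac{(1-|q|)\,\sigma(I_\delta)}{|1-\overline{\mathfrak a}q|^2}\ge cM\,\frac{\delta\,(1-|q|)}{|1-\overline{\mathfrak a}q|^2}.
\]
Taking $cM>C_0$ gives the strict inequality throughout the collar $0<y\le\delta/2$. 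The extra term $\kappa(1-|q|)$ is not needed here, but it gives slack for $q$ far from $\zeta_0$, where $g_\D(q,\mathfrak a)\lesssim\delta(1-|q|)$ is much smaller once $\delta\ll\kappa$. Strictness at every point, not only almost everywhere, is automatic because both sides are continuous for $y>0$, and $\delta$ can be shrunk freely in case~(a).

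\textbf{Main obstacle.} The delicate point is Step~1: justifying the lower bound for $u$ in the annulus rather than the disk, with constants uniform up to $\partial\D$. My first attempt would be to subtract the harmonic function solving the Dirichlet problem on the inner circle (which is $\ge\kappa(1-|q|)$ and hence harmless), and then apply the disk factorization to what remains, restricted near $\partial\D$. The second delicate point is the uniformity of the geometric comparison $|\zeta-q|\lesssim|1-\overline{\mathfrak a}q|$ for all $q$ in the collar at once. That is a direct case check according to whether $|q-\zeta_0|\lessgtr\delta$.
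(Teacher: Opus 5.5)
Your case~(a) is in substance the paper's Case~2: a lower bound for the Poisson kernel, the bound $g_\beta\lesssim\delta y/(\dist_\T(x,x_0)^2+\delta^2)$, and a point where the singular measure has large density. Case~(b), however, fails.

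Let the chosen zero be $z_j=x_0+iy_j$. Your prescription gives $\delta\approx 2\pi y_j/M'\ll y_j$. At the collar point $x_0+i\delta/2$ one computes exactly
\[
|\beta(x_0+i\delta/2)|=\frac{e^{-\pi\delta}}{1+e^{-\pi\delta}+e^{-2\pi\delta}}<\frac13 ,
\]
so \eqref{eq502mjj} demands $u(x_0+i\delta/2)>\log 3$. The Blaschke factor of $z_j$ contributes only about $\delta/y_j=O(1/M')$ there. Now take $f$ a Blaschke product in $e^{2\pi iz}$ with sparse zeros accumulating at $\T$. Then $\sigma=0$, so only case~(b) is available, and $u$ near $x_0+i\delta/2$ is essentially this single term. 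Hence \eqref{eq502mjj} is violated.

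The false step is the claim that $g_\D(q,q_j)$ dominates $c(1-|q_j|)P_\D(q,\zeta_0)$ on $1-|q|\lesssim\delta$. For $|q-\zeta_0|\lesssim 1-|q_j|$, the Green function of an interior zero is only of size $(1-|q|)/(1-|q_j|)$, while $P_\D(q,\zeta_0)$ grows like $(1-|q|)/|q-\zeta_0|^2$. A zero at height $y_j$ resembles an atom only at horizontal scales much larger than $y_j$.

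More fundamentally, the boundary normal derivatives $\partial_y(-\log|\cdot|)|_{y=0}$ of $\beta$ and of the Blaschke factor of $z_j$ are both $2\pi$ times Poisson kernels on $\T$, so they have equal total mass. Unless $\delta=y_j$, the former exceeds the latter somewhere, so a single zero can never dominate $g_\beta$ by itself. For $\delta\ll y_j$ the excess near $x_0$ is of order $1/\delta$, far more than your slack $\kappa(1-|q|)$ can absorb.

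The missing idea is exact cancellation. Take $\delta=y_j$, i.e.\ put the zero of $\beta$ at $z_j$, with $y_j$ small enough that $\max_x g_\beta(x+ih_0)<\min_x u(x+ih_0)$ on a zero-free circle $\Im z=h_0$. Then:
\begin{itemize}
\item $f/\beta$ is holomorphic on $0<\Im z<h_0$;
\item its boundary $\limsup$ at $\T$ is at most $1$;
\item it has modulus $<1$ on $\Im z=h_0$.
\end{itemize}
The maximum principle gives $|f|<|\beta|$ wherever $\beta\ne0$, in particular on $0<y\le\delta/2$. This is the paper's Case~1.

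Splitting into ``zeros accumulate at $\T$'' versus ``zero-free collar'', rather than $\sigma\neq0$ versus accumulating zeros, also removes the obstacle you flag in Step~1. In the zero-free case $u$ is positive harmonic near $\T$, and the Herglotz representation with the strip's own Poisson kernel is exact, with a nonnegative upper-boundary term. No Riesz decomposition with infinitely many zeros is needed, and no comparison between annulus and disk Green functions. (The annulus Green function is smaller than $g_\D$, so that decomposition does not give you $u\ge\sum_j g_\D(\cdot,q_j)$.)
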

\begin{proof}
Set $u=-\log|f|>0$ and $g_\beta=-\log|\beta|$. Fix $h_0\in(0,\min\{h,1/2\})$ such that $f$ has no zeros on the horizontal circle $\Im z=h_0$, so that $\min_{x\in\T}u(x+ih_0)>0$. If zeros of $f$ approach the real axis, we place the zero of the Blaschke factor at one of them. Otherwise, the singular boundary measure in the Poisson--Herglotz representation supplies the required comparison.

\medskip
\noindent{\it Case 1: Zeros of $f$ accumulate at $\T$.}
Choose a zero $z_0=x_0+i\delta$ of $f$ with $\delta\in(0,h_0)$ small enough that $\max_{x\in\T}g_\beta(x+ih_0)<\min_{x\in\T}u(x+ih_0)$. The quotient $g=f/\beta$ extends holomorphically across $z_0$, since $f$ vanishes there at least to the order of $\beta$. It is bounded on $\T\times(0,h_0)$, satisfies $|g|<1$ on $\Im z=h_0$, and obeys
\[
\limsup_{y\downarrow0}\sup_{x\in\T}|g(x+iy)|\le1,
\]
because $|f|<1$ and $|\beta(x+iy)|\to1$ uniformly as $y\downarrow0$. Applying the maximum modulus principle on $\T\times[\tau,h_0]$ and letting $\tau\downarrow0$ gives $|g|\le1$ throughout the strip; the strict upper-boundary inequality and the strong maximum principle give $|g|<1$ in its interior. Since $\beta$ has no zeros on $0<\Im z\le\delta/2$, we obtain $|f|<|\beta|$ there, proving \eqref{eq502mjj}.

\medskip
\noindent{\it Case 2: $f$ has no zeros in $0<\Im z\le h_0$ (after shrinking $h_0$ if needed).}
In this case $u$ is positive and harmonic on $0<\Im z\le h_0$. Since $\int_\T u(x+i\tau)\,dx$ is affine in $\tau\in(0,h_0]$ and bounded below by $0$, the family of measures $u(x+i\tau)\,dx$ has uniformly bounded total mass as $\tau\downarrow0$. Passing to a weak-$*$ convergent subsequence of the lower boundary measures in the Dirichlet representation on $\tau\le\Im z\le h_0$ as $\tau\downarrow0$ yields the Poisson--Herglotz representation
\begin{equation}\label{eq53pssn}
u(x+iy)=\int_\T P_{h_0}(x-t,y)\,d\mu(t)+v(x,y), \quad 0<y\le h_0,
\end{equation}
where $\mu$ is a finite positive Borel measure on $\T$, $v\ge0$ is the harmonic extension of the upper boundary data $u(\cdot+ih_0)$ with $v(\cdot,0)=0$, and $P_{h_0}(t,y)$ is the periodized lower-boundary strip Poisson kernel. Since $\lim_{y\downarrow0}u(x+iy)=0$ for Lebesgue-almost every $x\in\T$ by hypothesis, Fatou's theorem implies that $\mu$ is singular with respect to Lebesgue measure. Moreover, $\mu\ne0$: otherwise $u=v$ would extend harmonically across $\T$ by odd reflection, and hence $f$ would extend holomorphically across $\T$ by local Schwarz reflection, contradicting the hypothesis.

The lower-boundary Poisson kernel is obtained by periodizing the positive strip kernel
\[
\frac1{2h_0}\,
\frac{\sin(\pi y/h_0)}
{\cosh(\pi t/h_0)-\cos(\pi y/h_0)}.
\]
Its Taylor expansion near $(t,y)=(0,0)$, together with positivity and compactness away from that point, gives the lower bound
\begin{equation}\label{eq504psbb}
P_{h_0}(t,y)\ge\frac{c\,y}{\dist_\T(t,0)^2+y^2} \quad (t\in\T,\ 0<y\le h_0/2)
\end{equation}
for a constant $c>0$ depending only on $h_0$, where $\dist_\T(s,t)=\min_{k\in\Z}|s-t-k|$ is the distance on $\T$. For any $x_0\in\T$ and $\delta\in(0,h_0/2)$, let $I_\delta(x_0)=\{t\in\T:\dist_\T(t,x_0)\le\delta\}$. For $t\in I_\delta(x_0)$ and $0<y\le\delta/2$, the triangle inequality gives $\dist_\T(x,t)^2+y^2\le 2\bigl(\dist_\T(x,x_0)^2+\delta^2\bigr)+\frac14\delta^2\le 3\bigl(\dist_\T(x,x_0)^2+\delta^2\bigr)$, so~\eqref{eq53pssn} and~\eqref{eq504psbb} imply
\[
u(x+iy)\ge\frac{c}{3}\,\mu(I_\delta(x_0))\frac{y}{\dist_\T(x,x_0)^2+\delta^2}, \quad 0<y\le\delta/2.
\]
On the other hand, a direct estimate from \eqref{eq51blsk} gives, for $0<y\le\delta/2$,
\[
g_\beta(x+iy)
=\frac12\log\frac{\cosh(2\pi(\delta+y))-\cos(2\pi(x-x_0))}
{\cosh(2\pi(\delta-y))-\cos(2\pi(x-x_0))}
\le\frac{C\,\delta y}{\dist_\T(x,x_0)^2+\delta^2},
\]
where $C>0$ is an absolute constant. Since $\mu$ is nonzero and singular, there exist $x_0\in\T$ and $\delta\in(0,h_0/2)$ with $\mu(I_\delta(x_0))>(3C/c)\delta$. Otherwise, covering a Lebesgue-null set carrying $\mu$ by arcs of radius less than $h_0/2$ and arbitrarily small total length would give $\mu(\T)=0$. For this pair $(x_0,\delta)$, the preceding estimates yield $u>g_\beta$ on $0<y\le\delta/2$, establishing \eqref{eq502mjj}. Finally, the zeros and poles of $\beta$ lie at heights $\delta$ and $-\delta$, respectively. Thus $\beta$ is holomorphic and nonvanishing near $|\Im z|\le\delta/2$, with $|\beta|=1$ and $\wind\beta=1$ on $\T$.
\end{proof}

\section{Descent to the original circle}\label{sec6fnn}

The following criterion will be applied after constructing a periodic gauge.

\begin{lemma}\label{lem601ohh}
Let $\eps>0$ and $\alpha\in\R\setminus\mathbb Q$, and let $D\in C^\omega(\T,\SL(2,\R))$ have degree $d>0$ and extend holomorphically to a neighborhood of the closed strip $0\le\Im z\le\eps$. Suppose that $D_\eps$ is uniformly hyperbolic with acceleration $d$, and that its Cayley unstable and stable sections take values in $\D$ and $\widehat\C\setminus\overline\D$, respectively. Then $D_N$ is strictly positively monotonic on $\T$ for all sufficiently large $N$.
\end{lemma}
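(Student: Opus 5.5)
The strategy is to apply the scalar boundary principle from the introduction to the projective action of long transfer products in Cayley coordinates. Fix $\zeta_0\in\partial\D$ and consider $g_N(z)=\widehat D_N(z)\cdot\zeta_0=(c_N+e_N\zeta_0)/(a_N+b_N\zeta_0)$ on $0\le\Im z\le\eps$. If we show that $g_N$ is holomorphic there with $|g_N|=1$ on $\T$ and $|g_N(x+i\eps)|\le r<1$ uniformly in $x$ and $\zeta_0$, then the minimum principle for the superharmonic function $-\log|g_N|$ and the Cauchy--Riemann equations give $\partial_x\arg g_N(x)\ge\eps^{-1}\log(1/r)>0$. Since the Cayley angle is twice the real projective angle, \eqref{eq13angg} then yields $v^TQ_{D_N}v>0$ for every $v$, so $D_N$ is strictly positively monotonic.

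\emph{Step 1: no poles of $g_N$ in the strip.} Poles occur only where $a_N+b_N\zeta_0=0$, so I study the denominator. On $\T$ we have $|a_N|>|b_N|$, so $a_N+b_N\zeta_0$ is nonvanishing and homotopic to $a_N$. Its winding number on $\T$ is therefore $\wind a_N=-Nd$.

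At height $\eps$, use the hyperbolic splitting. Write $\widehat D_\eps$ in the diagonalizing frame with unstable section $(1,m_u)$, $|m_u|<1$, and stable section $(1,m_s)$ or $(0,1)$ with $m_s\notin\overline\D$. The vector $(1,\zeta_0)$ decomposes with a nonzero unstable component, since $\zeta_0\ne m_s$. Separation of the sections by $\partial\D$ and compactness give a lower bound on this component that is uniform in $x$ and $\zeta_0$. Consequently $\widehat D_N(x+i\eps)(1,\zeta_0)^T$ is dominated by $\lambda_N(x+i\eps)\,(1,m_u(x+i\eps+N\alpha))$ times a bounded nonvanishing factor. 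Hence the first coordinate $a_N+b_N\zeta_0$ equals $\lambda_N\cdot(\text{nonvanishing factor}+O(e^{-2\gamma N}))$ for large $N$, and its winding number is $\wind\lambda_N+O(1)$. Here I will need to track winding carefully: the correction factor is a fixed continuous function (components of the unstable and stable frame along $(1,\zeta_0)$), and one must show that its winding is $0$, or at least bounded independently of $N$.

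Since $\wind\lambda_N=-N\omega=-Nd$ by \eqref{eq206wdll}, \eqref{eq27itrr} and the acceleration hypothesis, matching the windings on the two circles requires the correction factor's winding to vanish. I expect to get this by continuity in $\zeta_0\in\partial\D$ (the set of admissible $\zeta_0$ is connected) together with a direct comparison at $N=0$ where possible. Alternatively, and this is the route I prefer, avoid the issue by comparing $\wind(a_N+b_N\zeta_0)$ at heights $y$ and $\eps$ via convexity and affinity of $L$, as in Proposition~\ref{prop21lww}. The argument principle on the annulus $\T\times[0,\eps]$ then shows there are no zeros of the denominator, so $g_N$ is holomorphic.

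\emph{Step 2: contraction at the top.} At height $\eps$, the image $\widehat D_N(x+i\eps)\cdot\zeta_0$ converges to $m_u(x+i\eps+N\alpha)$ uniformly in $x$ and in $\zeta_0\in\partial\D$, by the uniform separation from $m_s$. Since $\sup|m_u|<1$, we get $|g_N(x+i\eps)|\le r<1$ for large $N$. On $\T$, $|g_N|=1$ because $D_N$ is real.

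The main obstacle is Step 1: ruling out zeros of $a_N+b_N\zeta_0$ inside the strip uniformly in $\zeta_0$. This is exactly where minimal acceleration enters, by forcing equal winding numbers $-Nd$ on the bottom and top circles. Uniformity in $\zeta_0$ of the lower bound on $\partial_x\arg g_N$ then follows from compactness of $\partial\D$.
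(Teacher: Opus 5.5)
Your plan follows the paper's proof: equal windings $-Nd$ of the Cayley denominator on $\Im z=0$ and $\Im z=\eps$, the argument principle on $\T\times[0,\eps]$, uniform contraction of $\overline\D$ at the top, and the superharmonic boundary estimate. The one structural difference is harmless. You run the argument principle for every denominator $a_N+b_N\zeta_0$ with $\zeta_0\in\partial\D$. The paper runs it only for $a_N$, then propagates $|b_N/a_N|<1$ from the two boundary circles into the strip by the maximum principle.

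There is, however, a gap exactly where you flag one in Step~1, and neither proposed fix closes it. For large $N$, the winding of $a_N+b_N\zeta_0$ on $\Im z=\eps$ is $-Nd+\wind c_{\zeta_0}$, where $c_{\zeta_0}$ is the unstable coefficient of $(1,\zeta_0)^T$. The number of zeros in the strip is therefore $-\wind c_{\zeta_0}$, so you need this winding to be exactly $0$; a bound uniform in $N$ does not help. Continuity in $\zeta_0\in\partial\D$ only shows that the winding is independent of $\zeta_0$. The case $N=0$ carries no information, because there the stable term is not negligible. The convexity route of Proposition~\ref{prop21lww} is Jensen's formula, i.e.\ the argument principle again, so it needs the top winding as input. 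Affineness of $L(\alpha,D_y)$ controls $\int_\T\log|a_N+b_N\zeta_0|(x+iy)\,dx$ only up to $o(N)$, which cannot exclude a bounded number of zeros.

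The missing computation is short. It is where the separation of the sections by $\partial\D$ enters the winding count, not only the contraction. Normalize the stable direction as $(q,1)^T$ with $q=1/m_s$, so that $|m_u|,|q|\le\rho<1$ on $\T$. Solving $(1,\zeta_0)^T=c_{\zeta_0}(1,m_u)^T+c'(q,1)^T$ gives $c_{\zeta_0}=(1-q\zeta_0)/(1-m_uq)$. Since $|q\zeta_0|\le\rho$ and $|m_uq|\le\rho^2$, numerator and denominator both take values in the disk $\{|w-1|<1\}$, so $\wind c_{\zeta_0}=0$. Moreover $|c_{\zeta_0}|\ge(1-\rho)/(1+\rho^2)$, which makes the threshold on $N$ uniform in $\zeta_0\in\partial\D$. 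For $\zeta_0=0$ this is precisely the paper's observation that $\wind\Delta=0$ for $\Delta=1-pq$ in \eqref{eq61dmm}. With this inserted, the rest of your argument goes through as written.
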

\begin{proof}
We first exclude zeros of the upper-left entries of sufficiently long transfer products in Cayley coordinates on the closed strip. At height $\eps$, write the holomorphic unstable and stable directions of $\widehat D_\eps$ as $(1,p(x))^T$ and $(q(x),1)^T$, where $p,q:\T\to\D$ are analytic and satisfy $|p(x)|,|q(x)|\le\rho<1$ for all $x\in\T$. Setting $\Delta=1-pq$, the 1-periodic holomorphic frame
\[
V(x)=\begin{pmatrix}1&q(x)/\Delta(x)\\p(x)&1/\Delta(x)\end{pmatrix}\in\SL(2,\C)
\]
diagonalizes $\widehat D_\eps(x)$ with unstable multiplier $\lambda(x)$, where $\wind\lambda=-d$ by \eqref{eq206wdll} and $|\lambda_N(x)|\ge ce^{\gamma N}$ for $N\ge1$ with $c,\gamma>0$. Thus the upper-left entry of $\widehat D_N(x+i\eps)$ is given by
\begin{equation}\label{eq61dmm}
a_N(x+i\eps)
=\frac{\lambda_N(x)}{\Delta(x)}
-\frac{q(x+N\alpha)p(x)\lambda_N(x)^{-1}}{\Delta(x+N\alpha)}.
\end{equation}
Since $|pq|<1$, $\wind\Delta=0$, and for $N$ large the first term in \eqref{eq61dmm} dominates the second on $\T$, so $\wind a_N(\cdot+i\eps)=\wind\lambda_N=-Nd$. Because $\widehat D_N(x)\in\SU(1,1)$ on $\T$ has degree $Nd$, we also have $\wind a_N=-Nd$ on $\Im z=0$. By the argument principle on the cylinder $\T\times[0,\eps]$, $a_N(z)$ has no zeros in $0\le\Im z\le\eps$ for all sufficiently large $N$.

We next obtain uniform disk contraction at the upper boundary. Decomposing $(1,\zeta)^T$ ($\zeta\in\overline\D$) in the frame $V(x)$ gives
\[
\begin{pmatrix}1\\\zeta\end{pmatrix}
=\frac{1-q(x)\zeta}{\Delta(x)}\begin{pmatrix}1\\p(x)\end{pmatrix}
+\bigl(\zeta-p(x)\bigr)\begin{pmatrix}q(x)/\Delta(x)\\1/\Delta(x)\end{pmatrix}.
\]
Denote these two coefficients by $a_*=(1-q\zeta)/\Delta$ and $b_*=\zeta-p$. Since $|1-q\zeta|\ge1-\rho$ and $|\Delta|\le1+\rho^2$,
\[
\left|\frac{b_*}{a_*}\right|
\le K_\rho:=\frac{(1+\rho)(1+\rho^2)}{1-\rho}.
\]
After $N$ iterates, set $e_* =\lambda_N(x)^{-2}b_*/a_*$. The projective image is
\[
\widehat D_N(x+i\eps)\cdot\zeta
=\frac{p(x+N\alpha)+e_*/\Delta(x+N\alpha)}
{1+e_*q(x+N\alpha)/\Delta(x+N\alpha)},
\]
where $|e_*|\le K_\rho c^{-2}e^{-2\gamma N}$. Its difference from $p(x+N\alpha)$ equals
\[
\frac{e_*}{1+e_*q(x+N\alpha)/\Delta(x+N\alpha)}.
\]
Because $|\Delta|\ge1-\rho^2$, the denominator has modulus at least $1/2$ for all sufficiently large $N$, uniformly in $x$ and $\zeta\in\overline\D$. Thus
\[
\left|\widehat D_N(x+i\eps)\cdot\zeta-p(x+N\alpha)\right|
\le2K_\rho c^{-2}e^{-2\gamma N}.
\]
For all sufficiently large $N$, $\widehat D_N(x+i\eps)$ maps $\overline\D$ into $\{|w|\le(1+\rho)/2\}$. Since $\det\widehat D_N=1$ precludes cancellation of a pole, its M\"obius denominator has no zero in $\overline\D$, which gives $|b_N/a_N|<1$ on $\Im z=\eps$. Since $|b_N/a_N|<1$ on $\Im z=0$ and $a_N$ has no zeros in $0\le\Im z\le\eps$, the maximum principle gives $|b_N(z)/a_N(z)|<1$ throughout the closed strip.

The Cauchy--Riemann equations turn inward motion under complexification into positive angular motion on the real circle. For each $\zeta\in\partial\D$, the denominator $a_N(z)+b_N(z)\zeta$ is nonvanishing on $0\le\Im z\le\eps$, so $\Phi_\zeta(z)=\widehat D_N(z)\cdot\zeta$ is holomorphic on a neighborhood of $0\le\Im z\le\eps$, has modulus one on $\Im z=0$, and satisfies $|\Phi_\zeta(x+i\eps)|\le(1+\rho)/2<1$. Applying the maximum modulus principle to $\Phi_\zeta$ gives $|\Phi_\zeta|\le1$ throughout the closed strip. Hence $u_\zeta(z)=-\log|\Phi_\zeta(z)|$, interpreted as $+\infty$ at zeros of $\Phi_\zeta$, is superharmonic on $0<\Im z<\eps$. Since $|\Phi_\zeta|=1$ on the lower boundary, $\Phi_\zeta$ has no zeros near that boundary; thus $u_\zeta$ is harmonic there and vanishes on $\Im z=0$. On the upper boundary, $u_\zeta\ge\log\frac{2}{1+\rho}$. The minimum principle on $\T\times[0,\eps]$ gives
\[
u_\zeta(x+iy)\ge\frac{y}{\eps}\log\frac{2}{1+\rho}, \quad x\in\T,\ 0\le y\le\eps.
\]
Differentiating at $y=0^+$ and applying the Cauchy--Riemann equations gives
\[
\partial_x\arg\Phi_\zeta(x)
=-\left.\partial_y\log|\Phi_\zeta(x+iy)|\right|_{y=0}
=\partial_y u_\zeta(x)\ge\frac{1}{\eps}\log\frac{2}{1+\rho}>0
\]
for all $(x,\zeta)\in\T\times\partial\D$. For $v=(v_1,v_2)^T\in\R^2\setminus\{0\}$, take $\zeta=(v_1+iv_2)/(v_1-iv_2)$. Since the Cayley angle is twice the vector angle, \eqref{eq13angg} gives
\[
\frac{v^TQ_{D_N}(x)v}{\|D_N(x)v\|^2}
=\frac12\partial_x\arg\Phi_\zeta(x)
\ge\frac1{2\eps}\log\frac2{1+\rho}>0.
\]
Thus $Q_{D_N}>0$ on $\T$ for every sufficiently large $N$.
\end{proof}

We also record the elementary topological obstruction to continuous real invariant line bundles in nonzero degree.

\begin{lemma}\label{lem62ldd}
If $A:\T\to\SL(2,\R)$ and $E:\T\to\R\mathbb P^1\cong\R/\pi\Z$ are continuous, then $\deg(AE)=\deg E+2\deg A$. In particular, a cocycle $(\alpha,A)$ with $\deg A\ne0$ admits no continuous invariant line bundle $E:\T\to\R\mathbb P^1$.
\end{lemma}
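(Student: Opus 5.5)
The plan is to work with continuous lifts to the universal cover of $\R\mathbb P^1$ and track winding numbers. Identify $\R\mathbb P^1$ with $\R/\pi\Z$, so that a continuous map $E:\T\to\R\mathbb P^1$ has a lift $\widetilde E:\R\to\R$ with $\widetilde E(x+1)=\widetilde E(x)+\pi\deg E$. Here $AE$ denotes the map $x\mapsto A(x)E(x)$, the image line under the projective action. By the convention fixed in Section~\ref{sec2prr}, the rotation $R_\theta$ acts on $\R/\pi\Z$ as translation by $2\pi\theta$. Hence the loop $R_{dx}$ moves a fixed line by total angle $2\pi d$, which is $2d$ units of $\pi$.

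\textbf{Step 1: the formula.} Homotope $A$ through loops in $\SL(2,\R)$ to $x\mapsto R_{dx}$, with $d=\deg A$, using the deformation retraction onto $\SO(2,\R)$ and the definition of degree. Along this homotopy, $x\mapsto A_t(x)E(x)$ is a continuous family of loops in $\R\mathbb P^1$, so its degree is constant in $t$. At the endpoint we have $R_{dx}E(x)=E(x)+2\pi dx$ modulo $\pi$. Its lift $\widetilde E(x)+2\pi dx$ gains $\pi\deg E+2\pi d$ over one period, which gives $\deg(AE)=\deg E+2d$. A cleaner alternative is to use the well-defined, bi-additive degree of the map $\T\times\T\to\R\mathbb P^1$, $(x,y)\mapsto A(x)E(y)$, restricted to the diagonal. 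Either route works, and I would present the homotopy version.

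\textbf{Step 2: the consequence.} Suppose $E$ is a continuous invariant line bundle, so that $A(x)E(x)=E(x+\alpha)$. Then $\deg(AE)=\deg(E\circ T)=\deg E$, because translation of the base preserves degree: the map $x\mapsto x+\alpha$ is homotopic to the identity on $\T$. Step 1 then forces $2\deg A=0$, contradicting $\deg A\ne0$.

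The only delicate point is the normalization: the factor $2$ comes from $\R\mathbb P^1$ having period $\pi$ while $R_\theta$ rotates by $2\pi\theta$. I would verify this directly on $R_{dx}$ acting on a constant line.
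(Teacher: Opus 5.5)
Your proposal is correct and follows essentially the same route as the paper. You homotope $A$ to the linear rotation $R_{dx}$, read off that this shifts the projective degree of $E$ by $2d$ (your explicit check of the factor $2$ from the period $\pi$ of $\R\mathbb P^1$ is the normalization the paper leaves implicit), and then use $\deg(E\circ T)=\deg E$ to force $2d=0$.
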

\begin{proof}
Deforming $A(x)$ to $R_{dx}$ ($d=\deg A$) shows that $A$ shifts the projective degree of $E$ by $2d$, whereas $\deg(E\circ T)=\deg E$; invariance $E\circ T=AE$ would thus force $2d=0$.
\end{proof}

\begin{proposition}\label{prop63dss}
Let $\alpha\in\R\setminus\mathbb Q$ and $A\in C^\omega(\T,\SL(2,\R))$ with $d=\deg A>0$, $L(\alpha,A)>0$, and $\omega(\alpha,A)=d$. Suppose there exists a 4-periodic real-analytic gauge $B_0:\R\to\SL(2,\R)$ satisfying \eqref{eq403qrt} such that $C=(B_0\circ T)AB_0^{-1}$ is strictly positively monotonic. Then there exists a 1-periodic gauge $B\in C^\omega(\T,\SL(2,\R))$ such that $(B\circ T^N)A_N B^{-1}$ is strictly positively monotonic on $\T$ for all sufficiently large $N$.
\end{proposition}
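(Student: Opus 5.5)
The plan is to untwist the quarter-turn monodromy of $B_0$ with a real rotation gauge built from the Blaschke factor of Lemma~\ref{lem51mjj}. This gives a 1-periodic gauge $B$. Then I verify the hypotheses of Lemma~\ref{lem601ohh} for $D=(B\circ T)AB^{-1}$.

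\emph{Monodromy of the invariant sections of $C$.} Since $C$ is strictly positively monotonic, the discussion at the end of Section~\ref{sec23hyp} applies on the 4-periodic cover, after rescaling as in the covering normalization. For all small $y>0$, $\widehat C(x+iy)$ maps $\overline\D$ strictly into $\D$. Hence, at every height $0<y<h'$, there are holomorphic Cayley unstable and stable sections $m_u:\R\to\D$ and $m_s:\R\to\widehat\C\setminus\overline\D$. Uniform hyperbolicity of $A_y$ for $0<y\le\eta$ lets these sections live on a whole strip $0<\Im z<h'$.

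By \eqref{eq403qrt}, $C(x+1)=R_{-1/4}C(x)R_{-1/4}^{-1}$, and $R_{-1/4}$ acts in Cayley coordinates as multiplication by $e^{-\pi i}=-1$. Uniqueness of the invariant sections then gives $m_{u,s}(z+1)=-m_{u,s}(z)$. Consequently $f:=m_s^{-2}$ is 1-periodic and holomorphic on $0<\Im z<h'$, with $|f|<1$.

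\emph{Hypotheses of Lemma~\ref{lem51mjj} for $f$.} By Oseledets, the nontangential boundary values of $m_s$ on $\T$ are a.e.\ the real stable direction, so $|f(x)|=1$ for a.e.\ $x$. I need a short argument here, via the dichotomy at height $\eta$ and the maximum principle, that the boundary limits exist and match. Next, suppose $f$ extended holomorphically across $\T$. Then $m_s$, hence a real stable direction, would extend continuously to $\T$. This gives a continuous invariant real line bundle for $(\alpha,C)$ on $\R/4\Z$, equivalently for $(\alpha,A)$, which contradicts Lemma~\ref{lem62ldd} since $d\ne0$. Lemma~\ref{lem51mjj} therefore provides $\beta$ with $\wind\beta=1$, $|\beta|=1$ on $\T$, and
\[
|f(x+iy)|<|\beta(x+iy)|<1 \qquad (0<y\le\delta/2).
\]

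\emph{The gauge.} Choose a holomorphic branch $\varphi=\beta^{1/2}$ on the universal cover of the strip. Since $\wind\beta=1$, we get $\varphi(z+1)=-\varphi(z)$ and $|\varphi|=1$ on $\R$. Write $\varphi(x)=e^{4\pi i\theta(x)}$ with real-analytic real $\theta$, so that $\theta(x+1)=\theta(x)+\tfrac14$, up to fixing the sign so the monodromies cancel; otherwise replace $\beta$ by $\bar\beta$-type conjugate data. Set $B(x)=R_{\theta(x)}B_0(x)$. Then
\[
B(x+1)=R_{\theta(x)+1/4}R_{-1/4}B_0(x)=B(x),
\]
so $B\in C^\omega(\T,\SL(2,\R))$. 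In Cayley coordinates $R_{\theta(z)}$ acts by $\zeta\mapsto\varphi(z)\zeta$. Hence the sections of $D=(B\circ T)AB^{-1}$ are $\varphi m_u$ and $\varphi m_s$. At height $\eps=\delta/2$ we have $|\varphi m_u|\le|m_u|<1$. We also have $|\varphi m_s|^2=|\beta|/|f|>1$ by the majorant, uniformly by compactness of $\Im z=\eps$.

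Moreover $\deg D=\deg A=d$, since $B$ is 1-periodic. The cocycle $D_\eps$ is uniformly hyperbolic and has acceleration $d$ by conjugacy invariance and \eqref{eq208aff}. Lemma~\ref{lem601ohh} then yields that $D_N=(B\circ T^N)A_NB^{-1}$ is strictly positively monotonic for all large $N$.

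The main obstacle I expect is the sign and branch bookkeeping in this gauge step: matching the direction of the $-1$ monodromy of $m_s$ with the winding of $\beta^{1/2}$, so that $B$ is genuinely periodic while the modulus inequality goes the right way. A secondary obstacle is justifying that the boundary values of $m_s$ are unimodular a.e.\ and that $m_s$ does not extend analytically across $\T$.
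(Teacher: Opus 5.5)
Your architecture matches the paper's: the same $f=m_s^{-2}$ with the same $-1$ monodromy, then Lemma~\ref{lem51mjj}, then the gauge $R_\theta B_0$ with $\theta=\frac1{8\pi i}\Log\beta$, then Lemma~\ref{lem601ohh}. However, the step you call secondary is a genuine gap, and the tool you propose for it (the dichotomy at height $\eta$ plus the maximum principle) cannot close it. The maximum principle only gives $|n(x)|\le1$ for the boundary values of $n=1/m_s$. Oseledets' theorem says nothing by itself about the boundary values of the complexified section. The a.e.\ identity $|f|=1$ on $\T$ is exactly where $L(\alpha,A)>0$ must enter, and the missing idea is dynamical:
\begin{itemize}
\item By Fatou, $n$ has vertical limits $n(x)$ for a.e.\ $x$. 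Letting $y\downarrow0$ in $m_s(z+\alpha)=\widehat C(z)\cdot m_s(z)$ shows that $1/n(x)$ is a measurable $C$-invariant section over $\R/4\Z$.
\item Since $\widehat C(x)\in\SU(1,1)$ preserves $\{|\zeta|>1\}$, the set $\{|n|<1\}$ is invariant. By ergodicity of $x\mapsto x+\alpha\pmod 4$ it is null or conull.
\item If it were conull, the reflection $\zeta\mapsto1/\overline\zeta$ (which commutes with $\SU(1,1)$) would give a measurable invariant section $\overline n$ with values in $\D$. Equivalently, this is a measurable invariant point of the upper half-plane, hence a measurable positive-definite $M$ with $M(x+\alpha)=C(x)M(x)C(x)^T$.
\item Then $M^{-1/2}$ conjugates $C$ measurably into $\SO(2,\R)$. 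Recurrence to a set where this conjugacy is bounded contradicts $L>0$, exactly as in Lemma~\ref{lem302rkk}.
\end{itemize}
You do not need, and have not justified, the stronger claim that the boundary values are the real \emph{stable} direction; only $|n|=1$ a.e.\ is used. Your non-extension argument also rests on this step. To produce a continuous real invariant line bundle you need $|f|=1$ on all of $\T$ (continuity plus a.e.\ unimodularity). Only then is $f$ zero-free near $\T$, with a square root extending $n$ across $\R$ and satisfying $|n|=1$ there.

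A smaller point in the gauge step: your sign worry is unfounded, and the proposed fallback would be harmful. For $\theta(z)=\frac1{8\pi i}\Log\beta(z)$, the fact $\wind\beta=1$ gives exactly $\theta(z+1)=\theta(z)+\frac14$, which is what cancels $R_{-1/4}$. Replacing $\beta$ by conjugate data would destroy holomorphy and the majorant $|f|<|\beta|$. With that branch fixed, the rest of your gauge step goes through as in the paper: $D$ has sections $\varphi m_u\in\D$ and $\varphi m_s\in\widehat\C\setminus\overline\D$ at a height $\eps\le\delta/2$ inside the affine interval \eqref{eq208aff}. Points with $m_s=\infty$ are harmless, since multiplication by $\varphi$ fixes $\infty$.
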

\begin{proof}
Equip $\R/4\Z$ with normalized Haar measure $dx/4$ and let $T_4(x)=x+\alpha\pmod 4$. This rotation is ergodic. Since $B_0$ and $B_0^{-1}$ are bounded on the fourfold cover, $C$ has Lyapunov exponent $L(\alpha,A)>0$ there. For $0<\Im z<h$ sufficiently small, $\widehat C(z)$ strictly contracts $\overline\D$, with holomorphic unstable and stable sections $m_u(z)\in\D$ and $m_s(z)\in\widehat\C\setminus\overline\D$. Define
\[
n(z)=\frac1{m_s(z)},\qquad f(z)=n(z)^2, \quad 0<\Im z<h.
\]
Then $n$ is holomorphic with $|n|<1$. By~\eqref{eq403qrt}, $C(z+1)=R_{-1/4}C(z)R_{1/4}$; since $R_{-1/4}$ acts projectively on $\widehat\C$ as multiplication by $-1$, uniqueness of the stable section forces $m_s(z+1)=-m_s(z)$, and hence
\[
n(z+1)=-n(z),\qquad f(z+1)=f(z).
\]
Thus $f$ is 1-periodic on $0<\Im z<h$ with $|f|<1$.

We verify that $f$ satisfies the hypotheses of Lemma~\ref{lem51mjj}. The boundary-modulus condition follows from positive Lyapunov exponent; nonzero degree gives the obstruction to continuation across the real circle. By Fatou's theorem, the nontangential boundary limit $n(x)=\lim_{y\downarrow0}n(x+iy)$ exists for almost every $x\in\R/4\Z$ and defines a measurable $C$-invariant section $m_s(x)=1/n(x)$. If $|n(x)|<1$ on a set of positive measure, ergodicity of $T_4$ implies $|n(x)|<1$ almost everywhere. In Cayley coordinates, complex conjugation on $\C^2$ corresponds to the reflection $\iota(\zeta)=1/\overline\zeta$, which commutes with $\widehat C(x)\in\SU(1,1)$. Hence $\overline{n(x)}=\iota(m_s(x))\in\D$ defines a measurable $C$-invariant section inside $\D$, which under the inverse Cayley transform corresponds to the measurable $C$-invariant section
\[
w(x)=i\frac{1-\overline{n(x)}}{1+\overline{n(x)}}=u(x)+iv(x)
\]
in the upper half-plane $\{v>0\}$. As in the proof of Lemma~\ref{lem302rkk}, the symmetric positive-definite matrix
\[
M(x)=\frac1{v(x)}\begin{pmatrix}1&u(x)\\u(x)&u(x)^2+v(x)^2\end{pmatrix}\in\SL(2,\R)
\]
satisfies $M(T_4x)=C(x)M(x)C(x)^T$. Indeed, the matrix
\[
V(x)=v(x)^{-1/2}\begin{pmatrix}1&0\\u(x)&v(x)\end{pmatrix}
\]
sends the line spanned by $(1,i)^T$ to the line spanned by $(1,w(x))^T$ and satisfies $M=VV^T$. Invariance of $w$ implies $V(T_4x)^{-1}C(x)V(x)\in\SO(2,\R)$, proving the stated covariance. Hence $G=M^{-1/2}$ also measurably conjugates $C$ into $\SO(2,\R)$.

Since $G$ is a measurable $\SL(2,\R)$-valued map, we may choose a set $E$ of positive measure on which $G$ and $G^{-1}$ are bounded. Almost every point of $E$ is both recurrent to $E$ and Oseledets regular for the normalized Haar measure. At such a point $x$, along its return times $n_k\to\infty$,
\[
C_{n_k}(x)=G(T_4^{n_k}x)^{-1}O_{n_k}(x)G(x),
\qquad O_{n_k}(x)\in\SO(2,\R),
\]
so $\|C_{n_k}(x)\|$ remains bounded, contradicting the positive top Lyapunov exponent at $x$. Therefore $|n(x)|=|f(x)|=1$ almost everywhere on $\T$.

Next we check that $f$ cannot extend holomorphically across all of $\T$. Indeed, if it did, then $|f|=1$ everywhere on $\T$ by continuity, so $f$ would be nonvanishing on a thin strip $|\Im z|<\delta_0$ around $\R$. On this simply connected strip, $f$ admits a holomorphic square root agreeing with $n$ on $0<\Im z<\delta_0$, which extends $n$ holomorphically across $\R$ while preserving $n(z+1)=-n(z)$ by the identity theorem. On $\R$, $|n(x)|=1$ then defines a continuous real $C$-invariant line bundle $\ell_s(x)$ satisfying $\ell_s(x+1)=R_{-1/4}\ell_s(x)$. Pulling back by $B_0$ gives a continuous $A$-invariant line bundle $E(x)=B_0(x)^{-1}\ell_s(x)$ satisfying
\[
E(x+1)=B_0(x)^{-1}R_{1/4}R_{-1/4}\ell_s(x)=E(x),
\]
which descends to a continuous real invariant line bundle for $(\alpha,A)$ on $\T$, contradicting Lemma~\ref{lem62ldd}.

We may therefore apply Lemma~\ref{lem51mjj} to $f$, obtaining a Blaschke factor $\beta$ satisfying \eqref{eq502mjj} on $0<y\le\delta/2$. Fix $\eps\in(0,\delta/2]$ small enough to lie in the affine interval~\eqref{eq208aff} and in the analyticity strip of $B_0$. On a simply connected open strip in the universal cover containing $|\Im z|\le\eps$, choose a holomorphic logarithm $\Log\beta(z)$ and define
\[
F(z)=\frac{1}{8\pi i}\Log\beta(z),\qquad
B(z)=R_{F(z)}B_0(z).
\]
Since $|\beta(x)|=1$ and $\wind\beta=1$ on $\T$, the chosen logarithm satisfies
\[
\Log\beta(z+1)-\Log\beta(z)=2\pi i.
\]
Thus $F$ is real-valued on $\R$ and satisfies $F(z+1)=F(z)+1/4$, which gives
\[
B(z+1)=R_{1/4}R_{F(z)}R_{-1/4}B_0(z)=B(z),
\]
so $B\in C^\omega(\T,\SL(2,\R))$ is 1-periodic. Under the Cayley transform, $R_{F(z)}$ acts projectively as multiplication by
\[
q(z)=e^{4\pi iF(z)}=\exp\bigl(\tfrac12\Log\beta(z)\bigr).
\]
At height $\eps$, the unstable section $q m_u$ of $D=(B\circ T)AB^{-1}$ remains in $\D$, since $|q|=|\beta|^{1/2}<1$ and $|m_u|<1$. For the stable section, if $n(x+i\eps)=0$, then $m_s(x+i\eps)=\infty$ and its image is again~$\infty$. At all other points,~\eqref{eq502mjj} gives
\[
|q(x+i\eps)m_s(x+i\eps)|
=\frac{|\beta(x+i\eps)|^{1/2}}{|n(x+i\eps)|}
=\left(\frac{|\beta(x+i\eps)|}{|f(x+i\eps)|}\right)^{1/2}>1.
\]
Thus the stable section remains in $\widehat\C\setminus\overline\D$ everywhere on the upper circle.
Since $B$ is a 1-periodic conjugacy holomorphic on a neighborhood of the closed strip $|\Im z|\le\eps$, $D$ has degree $d$, and $D_\eps$ is uniformly hyperbolic with acceleration $\omega(\alpha,D_\eps)=\omega(\alpha,A_\eps)=d$. Lemma~\ref{lem601ohh} now implies that $D_N=(B\circ T^N)A_N B^{-1}$ is strictly positively monotonic on $\T$ for all sufficiently large $N$.
\end{proof}

\begin{proof}[Proof of Theorem~\ref{thm11mnn}]
Assume first that $\omega(\alpha,A)=|d|$. If $L(\alpha,A)=0$, premonotonicity follows from Proposition~\ref{prop25zrr}. If $L(\alpha,A)>0$ and $d>0$, combining Proposition~\ref{prop32cnnc}, Lemma~\ref{lem402qtt}, and Proposition~\ref{prop63dss} yields a 1-periodic gauge $B\in C^\omega(\T,\SL(2,\R))$ such that $(B\circ T^N)A_N B^{-1}$ is strictly positively monotonic for large $N$. If $L(\alpha,A)>0$ and $d<0$, applying the positive-degree case to the reflected cocycle $(-\alpha,A(-\cdot))$ (which has degree $-d>0$, Lyapunov exponent $L(\alpha,A)$, and acceleration $\omega(\alpha,A)$ since $L(\alpha,A_y)=L(\alpha,A_{-y})$) and reflecting the resulting gauge proves premonotonicity with negative sign.

Conversely, suppose $(\alpha,A)$ is premonotonic. Approximating the $C^1$ gauge from the definition of premonotonicity by a $C^1$-close real-analytic map $B\in C^\omega(\T,\SL(2,\R))$, the map $C(x)=B(x+N\alpha)A_N(x)B(x)^{-1}$ is strictly monotonic of degree $Nd$ over the irrational rotation $\beta=N\alpha$. If $d>0$, then $Q_C>0$, so $\widehat C_y$ strictly contracts $\overline\D$ for every sufficiently small $y>0$. At height $y$, its unstable section $(1,m_y)^T$ ($|m_y|<1$) has multiplier $\lambda_y=a_y+b_y m_y$. Since $|b_y/a_y|<1$ on a collar of the real circle, $\wind\lambda_y=\wind a_y=-\deg C=-Nd$. Formula~\eqref{eq206wdll}, applied at each such positive height, therefore gives
\[
\omega(\beta,C_y)=Nd\qquad (0<y<\eps)
\]
for some $\eps>0$. Thus the finite convex function $\ell(y)=L(\beta,C_y)$ has constant right derivative $2\pi Nd$ on $(0,\eps)$. Its continuity at $0$ gives
\[
L(\beta,C_y)=L(\beta,C)+2\pi Nd\,y
\qquad (0\le y<\eps),
\]
and hence $\omega(\beta,C)=Nd$. Conjugacy invariance and~\eqref{eq27itrr} now yield
\[
N\omega(\alpha,A)=\omega(N\alpha,A_N)=\omega(N\alpha,C)=Nd,
\]
and reflection gives $\omega(\alpha,A)=|d|$ when $d<0$.
\end{proof}

Premonotonicity is open in $(\alpha,A)$ for the $C^1$ topology. Theorem~\ref{thm11mnn} therefore gives the following corollary.

\begin{corollary}
Let $\alpha\in\R\setminus\mathbb Q$ and $A\in C^\omega(\T,\SL(2,\R))$ with $d=\deg A\ne0$ and $\omega(\alpha,A)=|d|$. Then there exists a $C^1$-neighborhood $\mathcal V\subset\R\times C^1(\T,\SL(2,\R))$ of $(\alpha,A)$ such that every real-analytic cocycle $(\widetilde\alpha,\widetilde A)\in\mathcal V$ with $\widetilde\alpha\in\R\setminus\mathbb Q$ satisfies $\omega(\widetilde\alpha,\widetilde A)=|d|$.
\end{corollary}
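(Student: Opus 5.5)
The corollary follows by combining Theorem~\ref{thm11mnn} in both directions with the $C^1$-openness of premonotonicity jointly in $(\alpha,A)$. The plan is: first pass from minimal acceleration to premonotonicity, then show this is stable under small perturbations, then apply the converse direction of the theorem.

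By the forward implication of Theorem~\ref{thm11mnn}, the hypothesis $\omega(\alpha,A)=|d|$ gives $N\ge1$ and a 1-periodic gauge $B\in C^\omega(\T,\SL(2,\R))$ with $C=(B\circ T_\alpha^N)A_NB^{-1}$ strictly monotonic, with sign $\sigma=\operatorname{sgn}d$. By compactness there is $\delta>0$ with $\sigma Q_C\ge\delta I$ on $\T$.

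Next I would verify that this strict inequality persists for nearby data, with the \emph{same} $N$ and $B$. The map
\[
(\widetilde\alpha,\widetilde A)\longmapsto \widetilde C=B(\cdot+N\widetilde\alpha)\,\widetilde A_N\,B^{-1},
\qquad
\widetilde A_N(x)=\widetilde A(x+(N-1)\widetilde\alpha)\cdots\widetilde A(x),
\]
is continuous from $\R\times C^1$ into $C^1$. This holds because translation $x\mapsto x+\widetilde\alpha$ acts continuously on $C^1$ maps in the $C^1$ norm, finite products are continuous, and $B$ is smooth. Since $Q_{\widetilde C}=-\widetilde C^TJ\widetilde C'$ depends continuously on $\widetilde C$ in $C^0$, there is a $C^1$-neighborhood $\mathcal V$ of $(\alpha,A)$ on which $\sigma Q_{\widetilde C}\ge\frac\delta2 I$.

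Shrinking $\mathcal V$ further ensures $\deg\widetilde A=d$, since the degree is locally constant in $C^0$. Thus every $(\widetilde\alpha,\widetilde A)\in\mathcal V$ is premonotonic with gauge $B$ and iterate $N$.

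Finally, for real-analytic $\widetilde A$ with $\widetilde\alpha$ irrational, the converse direction of Theorem~\ref{thm11mnn} gives $\omega(\widetilde\alpha,\widetilde A)=|\deg\widetilde A|=|d|$.

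The only point that needs care is the continuity in $\widetilde\alpha$, since the frequency enters through the composition. This is harmless because all maps involved are $C^1$ on the compact circle and the perturbation is measured in the $C^1$ topology.
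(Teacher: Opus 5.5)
Your proposal is correct and follows the paper's argument: the forward direction of Theorem~\ref{thm11mnn}, then $C^1$-openness of premonotonicity in $(\alpha,A)$, then the converse direction of Theorem~\ref{thm11mnn}. The paper only asserts the openness. You verify it directly by keeping the same $N$ and analytic gauge $B$ and using joint continuity of $(\widetilde\alpha,\widetilde A)\mapsto B(\cdot+N\widetilde\alpha)\,\widetilde A_N B^{-1}$ into $C^1$, which is a valid justification.
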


\section{A variational formula for signedness}\label{sec70vrr}

To prove Proposition~\ref{prop103crr}, we use the Hahn--Banach alternative of Lemma~\ref{lem31hbb} to derive a variational formula for the margin in \eqref{eq104psii}. For $A\in C(\T,\SL(2,\R))$ and $\alpha\in\R$, we use the full notation $\TT_{\alpha,A}$ for the pullback operator and set $m(K)=\min_{x\in\T,\,\|v\|=1}v^TK(x)v$ for $K\in C(\T,\Sym_2\R)$. Let $\mathfrak M_{\alpha,A}$ denote the (possibly empty) weak-$*$ compact convex set of positive $\Sym_2\R$-valued Borel measures $\mu$ on $\T$ satisfying
\begin{equation}\label{eq71vrmm}
\tr\mu(\T)=1,\qquad
\int_\T\tr\bigl((\TT_{\alpha,A}H-H)\,d\mu\bigr)=0
\quad\text{for all }H\in C(\T,\Sym_2\R).
\end{equation}
For $A\in C^1(\T,\SL(2,\R))$, define the signedness margin
\[
\chi(\alpha,A)=\sup_{H\in C(\T,\Sym_2\R)}
m(Q_A+\TT_{\alpha,A}H-H).
\]

\begin{proposition}\label{prop701mgg}
For every $A\in C^1(\T,\SL(2,\R))$ and $\alpha\in\R$,
\begin{equation}\label{eq703mxmn}
\chi(\alpha,A)=\inf_{\mu\in\mathfrak M_{\alpha,A}}
\int_\T\tr(Q_A\,d\mu),
\end{equation}
with the convention $\inf\varnothing=+\infty$. When $\mathfrak M_{\alpha,A}\ne\varnothing$, the infimum is finite and attained.
\end{proposition}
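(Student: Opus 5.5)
The plan is to prove the two inequalities in \eqref{eq703mxmn} separately. Weak duality comes from integrating against measures. The reverse inequality comes from applying Lemma~\ref{lem31hbb} to shifted fields $F=Q_A-tI$.

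\emph{Weak duality.} For any $H\in C(\T,\Sym_2\R)$ and $\mu\in\mathfrak M_{\alpha,A}$, write $K=Q_A+\TT_{\alpha,A}H-H$. Then $K\ge m(K)I$ pointwise. Since $\mu$ is positive with $\tr\mu(\T)=1$, this gives $\int_\T\tr(K\,d\mu)\ge m(K)$. By \eqref{eq71vrmm} the left side equals $\int_\T\tr(Q_A\,d\mu)$. Taking the supremum over $H$ and the infimum over $\mu$ gives $\chi\le\inf$; when $\mathfrak M_{\alpha,A}=\varnothing$ this inequality is vacuous.

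\emph{Reverse inequality.} Fix a real $t<\inf_\mu\int\tr(Q_A\,d\mu)$ (any real $t$ if the set is empty). Apply Lemma~\ref{lem31hbb} with $F=Q_A-tI$. Alternative (2) would give a nonzero positive covariant measure $\mu$. Its trace mass is positive, since $\mu\ne0$ is positive semidefinite-valued, so we may normalize it to $\tr\mu(\T)=1$. The covariance in \eqref{eq31mssr} is equivalent to the vanishing condition in \eqref{eq71vrmm}, as shown in the proof of Lemma~\ref{lem31hbb}, so $\mu\in\mathfrak M_{\alpha,A}$. But then $\int\tr(F\,d\mu)=\int\tr(Q_A\,d\mu)-t>0$, contradicting alternative (2). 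Hence alternative (1) holds: there is $H$ with $Q_A-tI+\TT H-H>0$ on $\T$. By compactness $m(Q_A+\TT H-H)>t$, so $\chi>t$. Letting $t$ increase to the infimum gives $\chi\ge\inf$; in the empty case it gives $\chi=+\infty$. The lemma is stated for $F$ continuous and $A$ continuous, which holds for $A\in C^1$, so it applies for every real $\alpha$. Unique ergodicity is not needed here.

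\emph{Finiteness and attainment.} Suppose $\mathfrak M_{\alpha,A}\ne\varnothing$. Then the infimum is at least $-\|Q_A\|_\infty$ and at most $\|Q_A\|_\infty$, because $|\int\tr(Q_A\,d\mu)|\le\|Q_A\|_\infty\tr\mu(\T)$ for positive $\mu$. This uses $|\tr(KP)|\le\|K\|\tr P$ for $P\ge0$. Attainment follows from weak-$*$ compactness of $\mathfrak M_{\alpha,A}$ and weak-$*$ continuity of $\mu\mapsto\int\tr(Q_A\,d\mu)$, which holds because $Q_A$ is continuous.

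Compactness needs a short check, and this is the only point requiring care. Positive measures with $\tr\mu(\T)=1$ have uniformly bounded total variation, since off-diagonal masses are bounded by Cauchy--Schwarz on $2\times2$ positive blocks. By Banach--Alaoglu these measures form a weak-$*$ compact set, and it is metrizable because $C(\T)$ is separable. Each defining condition is weak-$*$ closed. Positivity means $\int\tr(K\,d\mu)\ge0$ for all $K\ge0$ in $C(\T,\Sym_2\R)$. The normalization is pairing with $K=I$. Invariance is pairing with continuous fields $\TT H-H$. Hence $\mathfrak M_{\alpha,A}$ is compact, and the infimum is attained.
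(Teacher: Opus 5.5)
Your proposal is correct and follows the paper's proof. Weak duality comes from integrating $Q_A+\TT_{\alpha,A}H-H\ge m(Q_A+\TT_{\alpha,A}H-H)\,I$ against normalized measures, and the reverse inequality from applying Lemma~\ref{lem31hbb} to $F=Q_A-tI$; you argue from $t<\inf$ to the existence of $H$, while the paper argues from $t>\chi$ to a measure with pairing at most $t$, which is the same argument read contrapositively. Attainment then follows from Banach--Alaoglu compactness, and your explicit bound $\bigl|\int_\T\tr(Q_A\,d\mu)\bigr|\le\|Q_A\|_\infty$ and your check that the defining conditions are weak-$*$ closed fill in details the paper leaves implicit.
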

\begin{proof}
For any $H\in C(\T,\Sym_2\R)$ and $\mu\in\mathfrak M_{\alpha,A}$, integrating $Q_A+\TT_{\alpha,A}H-H\ge m(Q_A+\TT_{\alpha,A}H-H)I$ against $\mu$ gives $m(Q_A+\TT_{\alpha,A}H-H)\le\int_\T\tr(Q_A\,d\mu)$. Conversely, for any $t\in\R$ with $t>\chi(\alpha,A)$, there is no $H\in C(\T,\Sym_2\R)$ satisfying $(Q_A-tI)+\TT_{\alpha,A}H-H>0$. Lemma~\ref{lem31hbb} applied to $F=Q_A-tI$ then produces a nonzero invariant positive matrix measure, which after trace normalization lies in $\mathfrak M_{\alpha,A}$ and satisfies $\int_\T\tr(Q_A\,d\mu)\le t$. This establishes~\eqref{eq703mxmn}.

Weak-$*$ compactness of $\mathfrak M_{\alpha,A}$ follows from the Banach--Alaoglu theorem (as the corresponding normalized positive functionals $\ell$ on $C(\T,\Sym_2\R)$ satisfy $\|\ell\|=\ell(I)=1$), so the infimum is attained whenever $\mathfrak M_{\alpha,A}\ne\varnothing$.
\end{proof}

\begin{corollary}\label{cor72scc}
For $A\in C^1(\T,\SL(2,\R))$ and $\alpha\in\R$, the following are equivalent:
\begin{enumerate}
\item $\chi(\alpha,A)>0$;
\item there exists $H\in C(\T,\Sym_2\R)$ (or a trigonometric polynomial $H$) with $Q_A+\TT_{\alpha,A}H-H>0$;
\item every $\mu\in\mathfrak M_{\alpha,A}$ satisfies $\int_\T\tr(Q_A\,d\mu)>0$.
\end{enumerate}
\end{corollary}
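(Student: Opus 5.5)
The plan is to prove the cycle of implications (2)$\Rightarrow$(1)$\Rightarrow$(3)$\Rightarrow$(2), using Proposition~\ref{prop701mgg} and Lemma~\ref{lem31hbb} as the main tools.

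\emph{(2)$\Rightarrow$(1).} Suppose $H\in C(\T,\Sym_2\R)$ satisfies $Q_A+\TT_{\alpha,A}H-H>0$. Then $m(Q_A+\TT_{\alpha,A}H-H)>0$ by compactness of $\T$ and of the unit sphere, since $m$ is the minimum of a positive continuous function. The definition of $\chi$ as a supremum gives $\chi(\alpha,A)>0$. The parenthetical version reduces to the continuous one: Lemma~\ref{lem31hbb}, applied with $F=Q_A$, lets us replace a continuous $H$ by a trigonometric polynomial, and the stability estimate in its proof preserves strict positivity.

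\emph{(1)$\Rightarrow$(3).} If $\chi(\alpha,A)>0$, the inequality half of Proposition~\ref{prop701mgg} applies. Pairing any $\mu\in\mathfrak M_{\alpha,A}$ with a near-optimal $H$ gives
\[
\int_\T\tr(Q_A\,d\mu)\ge\chi(\alpha,A)>0 .
\]
If $\chi=+\infty$, the pairing inequality still shows the integral exceeds every finite $m(\cdot)$ value. If $\mathfrak M_{\alpha,A}$ is empty, (3) holds vacuously.

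\emph{(3)$\Rightarrow$(1).} Assume (3). If $\mathfrak M_{\alpha,A}=\varnothing$, then Proposition~\ref{prop701mgg} gives $\chi=+\infty>0$. Otherwise the infimum in \eqref{eq703mxmn} is attained at some $\mu_0\in\mathfrak M_{\alpha,A}$, by weak-$*$ compactness and weak-$*$ continuity of $\mu\mapsto\int\tr(Q_A\,d\mu)$, since $Q_A$ is continuous. Hence $\chi(\alpha,A)=\int_\T\tr(Q_A\,d\mu_0)>0$.

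\emph{(1)$\Rightarrow$(2).} If $\chi>0$, choose $H$ with $m(Q_A+\TT H-H)>0$, which gives (2) directly.

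The only delicate point is attainment of the infimum in the step (3)$\Rightarrow$(1). Without it, a sequence of measures could have pairings that are positive but tend to $0$. This is exactly where the compactness already recorded in Proposition~\ref{prop701mgg} is needed. One could equally argue directly through Lemma~\ref{lem31hbb}: its alternative~(2) with $F=Q_A$ produces a normalized measure with nonpositive pairing, contradicting (3).
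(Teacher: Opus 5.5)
Your proposal is correct and follows essentially the same route as the paper. In both, (1)$\Leftrightarrow$(3) comes from the duality formula of Proposition~\ref{prop701mgg} together with attainment of the infimum by weak-$*$ compactness, with the empty case covered by $\inf\varnothing=+\infty$. In both, (1)$\Leftrightarrow$(2) comes from the definition of $\chi$ plus uniform approximation by symmetric trigonometric polynomials, which preserves the strict inequality; your alternative of getting (3)$\Rightarrow$(2) directly from alternative~(2) of Lemma~\ref{lem31hbb} with $F=Q_A$, after trace-normalizing the nonzero measure, is also valid and avoids the attainment step.
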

\begin{proof}
If $\mathfrak M_{\alpha,A}$ is nonempty, compactness and continuity of the pairing imply that condition~(3) is equivalent to positivity of its minimum. Proposition~\ref{prop701mgg} therefore gives $(1)\Leftrightarrow(3)$; if $\mathfrak M_{\alpha,A}$ is empty, both hold under the convention $\inf\varnothing=+\infty$. The equivalence $(1)\Leftrightarrow(2)$ follows from the definition of $\chi$, and uniform approximation by real symmetric trigonometric polynomials preserves the strict inequality.
\end{proof}

\begin{proof}[Proof of Proposition~\ref{prop103crr}]
Assume $d>0$; the case $d<0$ follows by applying the positive-degree case to $(-\alpha,A(-\cdot))$. By Corollary~\ref{cor72scc}, conditions~(2) and~(3) are equivalent. If $\omega(\alpha,A)=d$, Theorem~\ref{thm11mnn} provides $B\in C^\omega(\T,\SL(2,\R))$ and $N\ge1$ such that $C=(B\circ T^N)A_N B^{-1}$ satisfies $Q_C>0$. Set $H_B=B^T(-J)B'$ and recall the affine map $\FF(H)=Q_A+\TT_{\alpha,A}H$. Lemma~\ref{lem41gga} and~\eqref{eq21pqq} give $\FF^N(H_B)-H_B=B^T Q_C B>0$. The averaged analytic field $H=\frac1N\sum_{j=0}^{N-1}\FF^j(H_B)$ then satisfies $\FF(H)-H=\frac1N(\FF^N(H_B)-H_B)>0$, proving $(1)\Rightarrow(2)$.

Conversely, if $H\in C^\omega(\T,\Sym_2\R)$ satisfies~\eqref{eq104psii}, Lemma~\ref{lem402qtt} yields a 4-periodic analytic gauge $B_0$ such that $C=(B_0\circ T)AB_0^{-1}$ is strictly positively monotonic on $\R/4\Z$ with degree $4d$. Rescaling $x=4u$ gives a 1-periodic strictly positively monotonic cocycle $C(4\cdot)$ over $\alpha/4$ of degree $4d$. The covering identities in Section~\ref{sec23hyp} and the computation of the winding number of the unstable multiplier from the proof of Theorem~\ref{thm11mnn} give
\[
4\omega(\alpha,A)=\omega(\alpha/4,A(4\cdot))=\omega(\alpha/4,C(4\cdot))=4d,
\]
proving $(2)\Rightarrow(1)$.
\end{proof}

\section{Equivariant deformation and path connectedness}\label{sec8dff}

In $\Sp(2,\R)=\SL(2,\R)$, strict positive monotonicity $Q_C=C^T(-J)C'>0$ is equivalent to the positive-path condition $C'=JPC$ with $P>0$ \cite{Sli01,WZ24}. To deform elements of $\mathcal M_d(\alpha)$ while preserving 1-periodicity on $\T$, we use an explicit $\SO(2,\R)$-equivariant retraction onto $\SO(2,\R)$ that commutes with the quarter-turn monodromy of Lemma~\ref{lem402qtt}.

\subsection{An equivariant contraction of monotonic loops}

In this subsection we identify $(v_1,v_2)\in\R^2$ with $v_1+iv_2\in\C$ and use $a,b$ for the complex coefficients in the representation $Av=av+b\overline v$. These coefficients are distinct from the Cayley entries in \eqref{eq202cyy}. Every $\R$-linear map $A\in\SL(2,\R)$ has a unique such representation, with $|a|^2-|b|^2=1$. In particular, $a\ne0$, and the pair $(u,w)\in\partial\D\times\D$ gives global coordinates:
\[
u=\frac a{|a|}\in\partial\D,\qquad w=\frac ba\in\D,\qquad
a=\frac{u}{\sqrt{1-|w|^2}},\qquad
b=\frac{uw}{\sqrt{1-|w|^2}}.
\]
For $t\in[0,1]$, define $\mathcal P_t(A)\in\SL(2,\R)$ by the coefficients
\begin{equation}\label{eq82cntt}
a_t=\frac{u}{\sqrt{1-t^2|w|^2}},\qquad
b_t=\frac{tu w}{\sqrt{1-t^2|w|^2}},
\end{equation}
so that $\mathcal P_1(A)=A$, $\mathcal P_0(A)$ is the rotation $v\mapsto uv$, and $\mathcal P_t$ fixes $\SO(2,\R)$ pointwise.

\begin{lemma}\label{lem81cttr}
If $A(x)$ is a strictly positively (resp.\ negatively) monotonic $C^1$ curve in $\SL(2,\R)$, then $\mathcal P_t(A(x))$ is strictly positively (resp.\ negatively) monotonic for every $t\in[0,1]$. Moreover, for every $U\in\SO(2,\R)$,
\begin{equation}\label{eq803eqvv}
\mathcal P_t(UAU^{-1})=U\mathcal P_t(A)U^{-1}.
\end{equation}
If $A$ is real analytic and periodic, then $(t,x)\mapsto\mathcal P_t(A(x))$ is real analytic and periodic in $x$. Moreover, there exist $h>0$ and an open neighborhood $\Omega$ of the closed strip $\{|\Im z|\le h\}$ such that, for every $t\in[0,1]$, the map $x\mapsto\mathcal P_t(A(x))$ extends holomorphically to $\Omega$.
\end{lemma}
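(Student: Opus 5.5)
The plan is to work entirely in the complex coordinates $Av=av+b\overline v$, $u=a/|a|$, $w=b/a$. In these coordinates $\mathcal P_t$ keeps $u$ fixed and replaces $w$ by $tw$. Each of the three claims then reduces to a short computation in $(u,w)$.

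\emph{Monotonicity.} Let $A(x)$ be a $C^1$ curve with coordinates $u(x)=e^{i\varphi(x)}$ and $w(x)$, and take the unit vector $v=e^{i\theta}$ fixed. Then
\[
A(x)v=a e^{i\theta}\bigl(1+w e^{-2i\theta}\bigr),
\]
so $\arg(A(x)v)=\varphi(x)+\theta+\arg(1+w\zeta)$ with $\zeta=e^{-2i\theta}\in\partial\D$. Differentiating in $x$ and using \eqref{eq13angg}, strict positive monotonicity at $x$ is equivalent to
\[
\varphi'(x)+\Im g_x(\zeta)>0\quad\text{for all }|\zeta|=1,\qquad g_x(\zeta)=\frac{w'(x)\zeta}{1+w(x)\zeta}.
\]
The factor $\|A v\|^2>0$ does not affect the sign. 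Since $|w(x)|<1$, the function $g_x$ is holomorphic on a neighborhood of $\overline\D$, so $\Im g_x$ is harmonic there.

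For $\mathcal P_t(A)$ the same computation, with $w$ replaced by $tw$, shows that the condition reads $\varphi'+\Im g_x(t\zeta)>0$ on $|\zeta|=1$. By the minimum principle for harmonic functions, positivity of $\varphi'+\Im g_x$ on $\partial\D$ propagates to all of $\overline\D$. In particular it holds on the circle $|\zeta|=t$, including $t=0$, where $g_x(0)=0$ gives $\varphi'>0$. The strict inequality survives pointwise in $x$, which suffices for $Q>0$ at every point. The negative case is identical with reversed inequalities, using the maximum principle.

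\emph{Equivariance \eqref{eq803eqvv}.} For $U v=e^{i\psi}v$ one computes
\[
UAU^{-1}v=a v+e^{2i\psi}b\overline v.
\]
Thus conjugation leaves $u$ unchanged and sends $w\mapsto e^{2i\psi}w$. Both $|w|$ and the map $w\mapsto tw$ commute with this rotation, and \eqref{eq803eqvv} follows directly from \eqref{eq82cntt}.

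\emph{Analyticity and the uniform strip.} The coefficients $a,b$ are real-linear in the entries of $A$, hence real analytic and periodic. Writing $u=a/(a\overline a)^{1/2}$ and $|w|^2=b\overline b/(a\overline a)$, all quantities in \eqref{eq82cntt} are holomorphic functions of $(a,\overline a,b,\overline b,t)$. These are valid away from $a\overline a=0$ and away from $1-t^2|w|^2=0$, with principal square roots.

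On $\T$ we have $a\overline a\ge1$ and $1-t^2|w|^2\ge1-\max_\T|w|^2>0$ uniformly in $t\in[0,1]$. Replacing $\overline a(x),\overline b(x)$ by the holomorphic extensions $\overline{a(\overline z)},\overline{b(\overline z)}$ gives holomorphic functions on a common strip.

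I expect the main care to be needed here, in choosing one $h$ and one $\Omega$ that work for all $t$ simultaneously. The plan is to use uniform continuity on the compact set $[0,1]\times\{|\Im z|\le h_1\}$. This keeps $\Re(a\overline a)$ and $\Re(1-t^2 b\overline b/(a\overline a))$ bounded below by positive constants on a thinner closed strip $|\Im z|\le h$. On a slightly larger open neighborhood $\Omega$, the principal branches of the square roots are then holomorphic, jointly analytic in $(t,z)$, and real on $\T$. This yields both joint real analyticity in $(t,x)$ and holomorphic extension to the common $\Omega$.
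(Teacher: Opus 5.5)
Your proof is correct. The equivariance and analyticity parts match the paper's argument: rotation acts by $w\mapsto e^{2i\psi}w$, and the holomorphic extension uses Schwarz-reflected coefficients with principal square roots on a common strip chosen by compactness.

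The genuine difference is in the monotonicity step.

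\begin{itemize}
\item The paper computes the sharp threshold explicitly. Since $(\zeta+w)^{-1}$ traces the circle with center $-\overline w/(1-|w|^2)$ and radius $(1-|w|^2)^{-1}$, strict positive monotonicity of $\mathcal P_t(A)$ at $x$ is equivalent to $\varphi'(x)>F(t)=\bigl(t|w'|+t^2\Im(\overline w w')\bigr)/(1-t^2|w|^2)$. The paper then verifies $F'(t)\ge0$ by a direct estimate using $|\Im(\overline w w')|\le|w|\,|w'|$.
\item You observe instead that replacing $w$ by $tw$, with $t$ constant in $x$ and $u$ unchanged, turns the angular speed into $\varphi'+\Im g_x(t\zeta)$. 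This is the same harmonic function of $\zeta$, restricted to the circle $|\zeta|=t$, so the minimum principle does all the work.
\end{itemize}

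In fact $F(t)=-\min_{|\zeta|=t}\Im g_x(\zeta)$, so your argument is a computation-free proof that $F$ is nondecreasing. The paper's route buys an explicit, quantitative criterion for monotonicity in the $(\varphi,w)$ coordinates. Yours is shorter, avoids the derivative inequality, and explains why the linear shrinking $w\mapsto tw$ is the natural contraction.

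One small point worth stating explicitly in the analyticity part. To extend the real matrix $\mathcal P_t(A)$, you need holomorphic extensions of $\overline{a_t}$ and $\overline{b_t}$ as well as of $a_t$ and $b_t$. These come from the same formulas with the roles of $(a,b)$ and $(a^*,b^*)$ exchanged. Your domain conditions on $aa^*$ and $1-t^2bb^*/(aa^*)$ are symmetric under this exchange, so nothing further is needed.
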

\begin{proof}
Write $u(x)=e^{i\varphi(x)}$ locally. For any nonzero $v\in\C$, setting $\zeta=v/\overline v\in\partial\D$ gives
\[
\frac d{dx}\arg(A(x)v)
=\varphi'(x)+\Im\frac{w'(x)}{\zeta+w(x)}.
\]
As $\zeta$ ranges over $\partial\D$, $(\zeta+w)^{-1}$ traces the circle with center $-\overline w/(1-|w|^2)$ and radius $(1-|w|^2)^{-1}$. Thus strict positive monotonicity of $\mathcal P_t(A)$ at $x$ is equivalent to $\varphi'(x)>F(t)$, where
\[
F(t)=\frac{t|w'|+t^2\Im(\overline w w')}{1-t^2|w|^2}.
\]
Since $|\Im(\overline w w')|\le|w|\,|w'|$, differentiating $F$ with respect to $t\in[0,1]$ yields
\[
\begin{aligned}
F'(t)&=\frac{|w'|(1+t^2|w|^2)+2t\Im(\overline w w')}{(1-t^2|w|^2)^2}\\
&\ge\frac{|w'|(1-t|w|)^2}{(1-t^2|w|^2)^2}\ge0.
\end{aligned}
\]
Hence $\varphi'>F(1)$ implies $\varphi'>F(t)$ for all $t\in[0,1]$. The negative case follows by reversing the orientation of $x$. If $U\in\SO(2,\R)$ acts by $v\mapsto e^{i\gamma}v$, then $UAU^{-1}$ has coefficients $(a,e^{2i\gamma}b)$ and coordinates $(u,e^{2i\gamma}w)$, which immediately gives \eqref{eq803eqvv}. Finally, write $f^*(z)=\overline{f(\overline z)}$ for the Schwarz reflection. For periodic analytic $A$, compactness gives a common neighborhood of a thin closed strip $|\Im z|\le h$ on which $aa^*$ and $1-t^2ww^*$ take values in the right half-plane for $t$ in a complex neighborhood of $[0,1]$. The principal square roots in~\eqref{eq82cntt}, together with the reflected coefficients $u^*/\sqrt{1-t^2ww^*}$ and $tu^*w^*/\sqrt{1-t^2ww^*}$, then define an $\SL(2,\C)$-valued map jointly holomorphic in $(t,z)$ and periodic in $z$.
\end{proof}

\subsection{Paths in the minimal-acceleration locus}

\begin{proof}[Proof of Theorem~\ref{thm14ptth}]
Assume first that $d>0$. For $A\in\mathcal M_d(\alpha)$, Proposition~\ref{prop103crr} and Lemma~\ref{lem402qtt} give a 4-periodic analytic gauge $B_0$ and a strictly positively monotonic cocycle $C=(B_0\circ T)AB_0^{-1}$ satisfying $B_0(x+1)=MB_0(x)$ and $C(x+1)=MC(x)M^{-1}$, where $M=R_{-1/4}$. We join $A$ to $R_{dx}$ by concatenating three paths:

\medskip
\noindent{\it Step 1. Contract the monotonic cocycle.}
For $t\in[0,1]$, define
\[
C_t=\mathcal P_t(C),\qquad
\mathcal A_t(x)=B_0(x+\alpha)^{-1}C_t(x)B_0(x).
\]
Then $\mathcal A_1=A$. By Lemma~\ref{lem81cttr}, $C_t$ is strictly positively monotonic and satisfies $C_t(x+1)=MC_t(x)M^{-1}$, so the monodromy cancels:
\[
\mathcal A_t(x+1)=B_0(x+\alpha)^{-1}M^{-1}\bigl(MC_t(x)M^{-1}\bigr)MB_0(x)=\mathcal A_t(x).
\]
Thus $\mathcal A_t\in C^\omega(\T,\SL(2,\R))$ has degree $d$, and the covering identities in Section~\ref{sec23hyp} and the computation of the winding number of the unstable multiplier from the proof of Theorem~\ref{thm11mnn} give $4\omega(\alpha,\mathcal A_t)=\omega(\alpha/4,C_t(4\cdot))=4d$, so $\mathcal A_t\in\mathcal M_d(\alpha)$ for all $t\in[0,1]$.

\medskip
\noindent{\it Step 2. Deform the periodic gauge.}
At $t=0$, $C_0$ takes values in $\SO(2,\R)$, which is abelian, so $C_0(x+1)=MC_0(x)M^{-1}=C_0(x)$ is already 1-periodic. Set $K(x)=R_{x/4}B_0(x)$. Since $R_{(x+1)/4}M=R_{x/4}$, $K\in C^\omega(\T,\SL(2,\R))$ is 1-periodic, and
\[
D(x):=K(x+\alpha)\mathcal A_0(x)K(x)^{-1}=R_{\alpha/4}C_0(x)
\]
is a 1-periodic $\SO(2,\R)$-valued cocycle of degree $d$. For $u\in[0,1]$, set $K_u=\mathcal P_u(K)$ and
\[
\mathcal B_u(x)=K_u(x+\alpha)^{-1}D(x)K_u(x).
\]
Then $\mathcal B_1=\mathcal A_0$, $\mathcal B_0$ takes values in $\SO(2,\R)$, and each $\mathcal B_u$ is analytically conjugate on $\T$ to the degree-$d$ rotation cocycle $D$. Writing $D(x)=R_{dx+\psi(x)}$ with $\psi\in C^\omega(\T,\R)$, its Cayley eigenvalues at height $y$ are $e^{\pm 2\pi i(dx+idy+\psi(x+iy))}$, so ergodic averaging gives
\[
L(\alpha,D_y)=2\pi\left|\int_\T\Im\bigl(dx+idy+\psi(x+iy)\bigr)\,dx\right|=2\pi dy, \quad 0\le y<h,
\]
where $h>0$ is a common strip width for the paths above and $\int_\T\psi(x+iy)\,dx=\int_\T\psi(x)\,dx\in\R$. Thus $\omega(\alpha,\mathcal B_u)=\omega(\alpha,D)=d$, and $\mathcal B_u\in\mathcal M_d(\alpha)$ for all $u\in[0,1]$.

\medskip
\noindent{\it Step 3. Remove the periodic part of the angle.}
Since $\mathcal B_0(x)=R_{dx+\psi_0(x)}$ for some $\psi_0\in C^\omega(\T,\R)$, the path $v\mapsto R_{dx+(1-v)\psi_0(x)}$ ($v\in[0,1]$) joins $\mathcal B_0$ to $R_{dx}$ inside $\mathcal M_d(\alpha)$. Lemma~\ref{lem81cttr} and the explicit paths above give a common smaller closed strip on which all three segments are continuous in the supremum norm. Concatenating the three segments completes the proof for $d>0$, and the case $d<0$ follows by reflecting $x\mapsto-x$ and $\alpha\mapsto-\alpha$.
\end{proof}

\begin{corollary}\label{cor803crr}
For every integer $r\ge1$, $\alpha\in\R\setminus\mathbb Q$, and $d\in\Z\setminus\{0\}$, the degree-$d$ premonotonic locus in $C^r(\T,\SL(2,\R))$ is path connected in the $C^r$ topology.
\end{corollary}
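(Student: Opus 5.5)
Fix $r\ge1$, $\alpha\in\R\setminus\mathbb Q$ and $d\ne0$, and let $\mathcal P^r_d$ denote the degree-$d$ premonotonic locus in $C^r(\T,\SL(2,\R))$. The plan is to reduce to Theorem~\ref{thm14ptth} in two moves: first join an arbitrary $A\in\mathcal P^r_d$ to a real-analytic premonotonic cocycle by a short straight-line path, and then join that analytic cocycle to $R_{dx}$ by the path of Theorem~\ref{thm14ptth}. For the second move, note that the path there is continuous in the supremum norm on a closed complex strip $|\Im z|\le h$. By Cauchy estimates on a slightly thinner strip, this gives continuity in every $C^r(\T)$ norm. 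Each point of that path lies in $\mathcal M_d(\alpha)$, so Theorem~\ref{thm11mnn} shows it is premonotonic, and the degree is constant along the path. Hence this segment lies in $\mathcal P^r_d$.

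For the first move, take $A\in\mathcal P^r_d$ with data $N\ge1$ and $B\in C^1(\T,\SL(2,\R))$ such that $(B\circ T^N)A_NB^{-1}$ is strictly monotonic. The key fact is that premonotonicity with a fixed pair $(N,B)$ is a $C^1$-open condition on $A$. Indeed, the map $A\mapsto Q_{(B\circ T^N)A_NB^{-1}}$ is continuous from $C^1$ to $C^0$: the formula $Q_C=-C^TJC'$ involves only one derivative of $A$ and of $B$, and strict definiteness on the compact circle $\T$ is an open condition in $C^0$. So choose a $C^1$-neighborhood $\mathcal V$ of $A$ on which the same $(N,B)$ still works, and shrink it to a convex $C^r$-ball. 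Pick a real-analytic $\widetilde A$ in that ball, for instance by mollifying or Fourier-truncating $A$ and then projecting back to $\SL(2,\R)$ by dividing by $\sqrt{\det}$. Straight-line interpolation does not stay in $\SL(2,\R)$, so join $A$ to $\widetilde A$ by the path $A_s=\bigl((1-s)A+s\widetilde A\bigr)/\sqrt{\det((1-s)A+s\widetilde A)}$. This is well defined and $C^r$-continuous in $s$ because the determinant stays close to $1$ in a small ball. It stays in $\mathcal V$, so every $A_s$ is premonotonic, and it has constant degree by homotopy invariance. Finally, $\widetilde A$ is analytic, premonotonic and of degree $d$, so by Theorem~\ref{thm11mnn} it lies in $\mathcal M_d(\alpha)$.

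Concatenating the two paths joins every point of $\mathcal P^r_d$ to $R_{dx}$, which gives path connectedness. The only step requiring care, and the one I expect to be the main obstacle, is the openness claim: one must check that the $C^1$-closeness of $A_s$ to $A$ controls $Q$ of the conjugated $N$-th iterate uniformly. This is a finite composition of continuous operations, so it should be routine. A smaller point is that for $r=1$ the gauge $B$ is only $C^1$. This causes no difficulty, since only $Q_C$, which uses $B'$ but not $B''$, enters the argument.
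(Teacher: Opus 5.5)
Your proposal is correct and follows the same route as the paper. The paper also approximates $A$ in $C^r$ by a normalized trigonometric polynomial $\widetilde A=P/\sqrt{\det P}$, joins the two by the normalized straight-line path $P_s/\sqrt{\det P_s}$ using $C^1$-openness of premonotonicity, and then applies Theorem~\ref{thm11mnn} and the strip path of Theorem~\ref{thm14ptth} with Cauchy estimates. One small precision: take the approximant from Fej\'er (Ces\`aro) means, or any $C^r$-close trigonometric polynomial, since plain Fourier truncation need not converge in $C^r$ and a compactly supported mollifier gives only a $C^\infty$ map rather than a real-analytic one.
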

\begin{proof}
Fix a cocycle $A$ in this locus. Choose a real matrix-valued trigonometric polynomial $P$ sufficiently close to $A$ in $C^r$. Then $\det P>0$ on $\T$, and $\widetilde A=P/\sqrt{\det P}$ is a real-analytic $\SL(2,\R)$-valued approximation of $A$. Since premonotonicity is $C^1$-open, choosing $P$ sufficiently close ensures that the normalized interpolation
\[
A_s=\frac{P_s}{\sqrt{\det P_s}},\qquad
P_s=(1-s)A+s\widetilde A\qquad (s\in[0,1])
\]
is well defined and remains in the degree-$d$ premonotonic locus. This path is continuous in $C^r$. By Theorem~\ref{thm11mnn}, $\widetilde A\in\mathcal M_d(\alpha)$. Theorem~\ref{thm14ptth} connects $\widetilde A$ to $R_{dx}$ through a path in $\mathcal M_d(\alpha)$ analytic on a common strip. This path consists of premonotonic cocycles by Theorem~\ref{thm11mnn} and is continuous in $C^r$ by Cauchy estimates on a thinner strip.
\end{proof}

\section{Regularity and the role of degree}\label{sec9mchh}

To compare Avila's degree-zero regularity theorem \cite[Theorem~6]{Av15} directly with Theorem~\ref{thm11mnn}, for $\alpha\in\R\setminus\mathbb Q$ and $A\in C^\omega(\T,\SL(2,\R))$ of degree $d\in\Z$ we set
\[
\mathcal E_A(y)=L(\alpha,A_y)-L(\alpha,A)-2\pi|d|\,|y|,
\qquad
\omega_{\mathrm{ex}}(\alpha,A)=\omega(\alpha,A)-|d|.
\]

\begin{proposition}\label{prop91exx}
If $A$ extends holomorphically to $|\Im z|<h$, then $\mathcal E_A$ is even, convex, and nonnegative on $(-h,h)$, and $\omega_{\mathrm{ex}}(\alpha,A)\in\Z_{\ge0}$. Moreover, $\omega_{\mathrm{ex}}(\alpha,A)=0$ if and only if $\mathcal E_A(y)=0$ for all sufficiently small $|y|$.
\end{proposition}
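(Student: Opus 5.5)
We assemble the statement from three facts already established: evenness of $y\mapsto L(\alpha,A_y)$, convexity, and the degree lower bound of Proposition~\ref{prop21lww}, together with Avila's quantization.

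\emph{Evenness.} For real-analytic $A$, the Schwarz reflection gives $A(x-iy)=\overline{A(x+iy)}$. Complex conjugation preserves norms, so $\|(A_{-y})_n(x)\|=\|(A_y)_n(x)\|$ and hence $L(\alpha,A_{-y})=L(\alpha,A_y)$. Since $|y|$ is even, $\mathcal E_A$ is even.

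\emph{Nonnegativity.} For $0\le y<h$, Proposition~\ref{prop21lww} gives $L(\alpha,A_y)\ge L(\alpha,A)+2\pi|d|y$, i.e.\ $\mathcal E_A(y)\ge0$. Evenness extends this to $(-h,h)$.

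\emph{Convexity.} I would use the fact, already recalled in the introduction via \cite{Av15}, that $y\mapsto L(\alpha,A_y)$ is convex on $(-h,h)$. On each half-line $[0,h)$ and $(-h,0]$, $\mathcal E_A$ is this convex function minus an affine one, so it is convex there. The only issue is the kink of $-2\pi|d||y|$ at the origin, which is concave. At $y=0$ we need the left derivative of $\mathcal E_A$ to be at most its right derivative. By evenness this amounts to $\mathcal E_A'(0^+)\ge0$. This holds because $\mathcal E_A(0)=0$ and $\mathcal E_A\ge0$; equivalently, $\mathcal E_A'(0^+)=2\pi\omega(\alpha,A)-2\pi|d|\ge0$ by Proposition~\ref{prop21lww}. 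A function that is convex on both sides of $0$ and has left derivative at most right derivative at $0$ is convex on the whole interval, so $\mathcal E_A$ is convex on $(-h,h)$. This gluing at $y=0$ is the only slightly delicate step.

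\emph{Integrality.} The acceleration $\omega(\alpha,A)$ is an integer by quantization \cite[Theorem~5]{Av15}, and $\omega(\alpha,A)\ge|d|$ by Proposition~\ref{prop21lww}. Hence $\omega_{\mathrm{ex}}(\alpha,A)\in\Z_{\ge0}$.

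\emph{Characterization.} If $\mathcal E_A$ vanishes near $0$, then its right derivative at $0$ is $0$. This derivative equals $2\pi\omega_{\mathrm{ex}}$, so $\omega_{\mathrm{ex}}=0$.

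Conversely, suppose $\omega_{\mathrm{ex}}=0$. By quantization, the right derivative of $L(\alpha,A_y)$ lies in $2\pi\Z$ at every $y$. This right derivative is nondecreasing and right-continuous, and equals $2\pi|d|$ at $0$, so it stays equal to $2\pi|d|$ on some interval $[0,\eta)$. This is exactly the argument leading to \eqref{eq208aff}. Thus $L(\alpha,A_y)=L(\alpha,A)+2\pi|d|y$ there, so $\mathcal E_A=0$ on $[0,\eta)$, and evenness gives $\mathcal E_A=0$ on $(-\eta,\eta)$.
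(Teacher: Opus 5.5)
Your proof is correct and follows the paper's argument essentially step for step. You obtain evenness and nonnegativity from Proposition~\ref{prop21lww}, glue convexity at $0$ via $\mathcal E_A'(0^+)\ge0\ge\mathcal E_A'(0^-)$, get integrality from quantization, and prove the characterization from right-continuity of the quantized right derivative. You additionally justify evenness of $y\mapsto L(\alpha,A_y)$ by Schwarz reflection, which the paper takes as known.
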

\begin{proof}
Evenness of $y\mapsto L(\alpha,A_y)$ and Proposition~\ref{prop21lww} imply that $\mathcal E_A$ is even and nonnegative, with right derivative $2\pi\omega_{\mathrm{ex}}(\alpha,A)\ge0$ at $y=0$. Convexity on $[0,h)$ and $(-h,0]$, together with $\mathcal E_A'(0^+)\ge0\ge\mathcal E_A'(0^-)$, ensures convexity on $(-h,h)$. Quantization of acceleration gives $\omega_{\mathrm{ex}}(\alpha,A)\in\Z_{\ge0}$ and, together with right continuity of the right derivative, implies that $\mathcal E'_{A,+}$ is constant on $[0,\eta]$ for some $\eta\in(0,h)$. Since $\mathcal E_A(0)=0$, this constancy proves the final equivalence.
\end{proof}

\begin{corollary}\label{cor902exx}
Suppose $L(\alpha,A)>0$. Then $\omega_{\mathrm{ex}}(\alpha,A)=0$ is equivalent to uniform hyperbolicity when $d=0$, and to premonotonicity when $d\ne0$.
\end{corollary}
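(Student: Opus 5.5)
The statement to prove is Corollary~\ref{cor902exx}. I will split according to whether $d=0$ or $d\ne0$. In both cases the first step is the same: apply Proposition~\ref{prop91exx}. It converts the condition $\omega_{\mathrm{ex}}(\alpha,A)=0$ into the condition that $\mathcal E_A$ vanishes near $y=0$, that is,
\[
L(\alpha,A_y)=L(\alpha,A)+2\pi|d|\,|y|\qquad\text{for all sufficiently small }|y|.
\]

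\emph{Case $d=0$.} Here $\omega_{\mathrm{ex}}=\omega$. Since $y\mapsto L(\alpha,A_y)$ is even, vanishing acceleration holds exactly when $y\mapsto L(\alpha,A_y)$ is constant, and in particular affine, on a two-sided neighborhood of $0$. Conversely, affinity near $0$ together with evenness forces slope zero.

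\begin{itemize}
\item Assume $\omega=0$ and $L(\alpha,A)>0$. Avila's regularity theorem \cite[Theorem~6]{Av15} then gives uniform hyperbolicity.
\item Assume $(\alpha,A)$ is uniformly hyperbolic. By \cite[Theorem~6]{Av15} (or openness of uniform hyperbolicity), $(\alpha,A_y)$ remains uniformly hyperbolic for small $|y|$. The function $y\mapsto L(\alpha,A_y)$ is then harmonic in $y$, as in the proof of Proposition~\ref{prop23allc}, so it is affine near $0$. Being even, it has zero slope, hence $\omega=0$. Alternatively, \eqref{eq206wdll} gives $\omega=-\wind\lambda$. The real invariant bundles give a real unstable multiplier whose winding is forced to vanish by the degree count in Lemma~\ref{lem62ldd} with $d=0$.
\end{itemize}

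\emph{Case $d\ne0$.} Here $\omega_{\mathrm{ex}}=0$ means precisely $\omega(\alpha,A)=|d|$. Theorem~\ref{thm11mnn} states that this is equivalent to premonotonicity, with no need for the hypothesis $L>0$. The positivity assumption matters only for interpretation: it places us in the nonuniformly hyperbolic regime. By Lemma~\ref{lem62ldd}, nonzero degree already excludes uniform hyperbolicity.

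\emph{Main obstacle.} The only nontrivial point is the reverse direction in degree zero, namely that uniform hyperbolicity forces $\omega=0$. This is standard from \cite{Av15}, so I would cite it directly rather than reprove it; everything else is bookkeeping with Proposition~\ref{prop91exx}.
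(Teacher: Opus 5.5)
Your proposal is correct and follows the paper's proof. For $d=0$, you identify $\omega_{\mathrm{ex}}(\alpha,A)=0$ with Avila's regularity condition, which at $L(\alpha,A)>0$ is equivalent to uniform hyperbolicity by \cite[Theorem~6]{Av15}; for $d\ne0$ it is exactly $\omega(\alpha,A)=|d|$, so Theorem~\ref{thm11mnn} applies. The only difference is that the paper cites Avila's theorem for both directions, whereas you also sketch the easy one: uniform hyperbolicity gives affinity near $0$, and evenness then forces slope zero.
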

\begin{proof}
When $d=0$, vanishing of $\mathcal E_A$ near $0$ is precisely Avila's regularity condition, which at $L(\alpha,A)>0$ is equivalent to uniform hyperbolicity \cite[Theorem~6]{Av15}; when $d\ne0$, the equivalence follows from Proposition~\ref{prop91exx} and Theorem~\ref{thm11mnn}.
\end{proof}

The following criterion characterizes uniform hyperbolicity by the absence of nonzero finite invariant positive matrix measures. Its proof uses quadratic cones; compare \cite{Woj85}.

\begin{proposition}\label{prop902ehh}
For any $\alpha\in\R$ and $A\in C(\T,\SL(2,\R))$, the following are equivalent:
\begin{enumerate}
\item $(\alpha,A)$ is uniformly hyperbolic;
\item $\mathfrak M_{\alpha,A}=\varnothing$;
\item there exists $H\in C(\T,\Sym_2\R)$ such that $A^T(H\circ T_\alpha)A-H>0$ on $\T$.
\end{enumerate}
If $A\in C^1(\T,\SL(2,\R))$, these are also equivalent to $\chi(\alpha,A)=+\infty$. In particular, $\mathfrak M_{\alpha,A}\ne\varnothing$ whenever $\deg A\ne0$.
\end{proposition}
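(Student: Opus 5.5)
We prove the cycle (1)$\Rightarrow$(3)$\Rightarrow$(2)$\Rightarrow$(1) and then add the $\chi$ statement. The step (3)$\Leftrightarrow$(2) is the Hahn--Banach alternative of Lemma~\ref{lem31hbb} applied with $F=0$. If (3) holds, integrating $\TT H-H>0$ against any normalized invariant $\mu$ gives $0<0$, so $\mathfrak M_{\alpha,A}=\varnothing$. Conversely, if no $H$ satisfies (3), alternative~(2) of Lemma~\ref{lem31hbb} produces a nonzero invariant positive matrix measure, and after trace normalization it lies in $\mathfrak M_{\alpha,A}$ (the condition $\int\tr(0\,d\mu)\le0$ is vacuous). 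For the $C^1$ addendum, note that $\chi=+\infty$ iff $\mathfrak M_{\alpha,A}=\varnothing$ by Proposition~\ref{prop701mgg} and its convention. Alternatively, if (3) holds, then scaling $H\mapsto\lambda H$ with $\lambda\to\infty$ makes $m(Q_A+\lambda(\TT H-H))\to\infty$. The final assertion follows from (1) together with Lemma~\ref{lem62ldd}: nonzero degree excludes continuous real invariant bundles, hence uniform hyperbolicity, hence emptiness of $\mathfrak M_{\alpha,A}$.

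\emph{(1)$\Rightarrow$(3).} Let $E^u,E^s$ be continuous unstable and stable line fields, and choose continuous unit sections $e_u,e_s$. These are defined possibly only up to sign, which is harmless since only the dual covectors squared will enter. Let $\ell_u,\ell_s$ be the dual covector fields, so $\ell_u(e_u)=1$ and $\ell_u(e_s)=0$, and similarly for $\ell_s$. Consider the Lyapunov-type quadratic form
\[
H_0(x)=\ell_u(x)^T\ell_u(x)-\ell_s(x)^T\ell_s(x).
\]
Then $v^T(\TT H_0-H_0)v$ compares $|\ell_u|$ and $|\ell_s|$ components after one step. This is not positive after one step, only after $N$ steps: $v^T(\TT_N H_0)v-v^TH_0v\ge c\|v\|^2$ for large $N$, by the uniform expansion and contraction rates, with an adapted metric if necessary. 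We then average as in the proof of Proposition~\ref{prop103crr}, setting $H=\frac1N\sum_{j=0}^{N-1}\TT^jH_0$, which gives $\TT H-H=\frac1N(\TT^NH_0-H_0)>0$. This is the standard quadratic-cone (Wojtkowski) argument.

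\emph{(3)$\Rightarrow$(1).} This is the step we expect to be the main obstacle. Given $H$ with $\TT H-H\ge\delta I$, first observe that $H(x)$ must be indefinite wherever the argument needs it. Consider the cone fields $\mathcal C^+(x)=\{v:v^TH(x)v\ge0\}$ and its complement $\mathcal C^-$. The inequality gives $v^TH(Tx)$-values increasing along orbits: writing $q_n=(A_nv)^TH(T^nx)(A_nv)$, we have $q_{n+1}-q_n\ge\delta\|A_nv\|^2$. Hence $A$ maps $\mathcal C^+$ strictly into itself and $A^{-1}$ maps $\mathcal C^-$ into itself. To get uniform growth, we use $q_{n}\ge q_0+\delta\sum_{k<n}\|A_kv\|^2$ together with $|q_n|\le\|H\|_\infty\|A_nv\|^2$. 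Thus $\|A_nv\|^2\ge(\delta/\|H\|_\infty)\sum_{k<n}\|A_kv\|^2$ for $v\in\mathcal C^+$, which forces geometric growth exactly as in the induction of Lemma~\ref{lem21evmm}. Nonemptiness of both cones at every $x$ comes from the following observation. If $H(x)$ were definite of either sign everywhere along an orbit, the telescoped growth, applied with $A_n^{-1}$ for negative $H$, contradicts $\det A=1$ in two dimensions. More carefully, if $H(x)\ge0$ at some point, then the two-sided boundedness argument on $\mathcal C^-$ fails. We will handle this by noting that the above estimates give a uniform dominated splitting via nested cones $\bigcap_n A_n(T^{-n}x)\mathcal C^+(T^{-n}x)$, and $\SL(2)$ forces the dominated splitting to be hyperbolic. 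The unstable line is obtained as this intersection, which is a single line by the uniform growth rate; the stable line is obtained symmetrically. Continuity follows from uniformity of the contraction of cones.
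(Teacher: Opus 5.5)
Your proof is correct. For $(2)\Leftrightarrow(3)$, the $\chi$ addendum, the final assertion, and the quadratic-cone argument for $(3)\Rightarrow(1)$, it follows the paper; the one genuine difference is in $(1)\Rightarrow(3)$.

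\textbf{The implication $(1)\Rightarrow(3)$.} The paper writes down the infinite-series form $h_x(v)=\sum_{k\ge1}\|A_{-k}(x)v_u\|^2-\sum_{k\ge0}\|A_k(x)v_s\|^2$. It satisfies the exact one-step identity $h_{T_\alpha x}(A(x)v)-h_x(v)=\|v_u\|^2+\|v_s\|^2\ge\frac12\|v\|^2$. No choice of $N$ and no averaging are needed, but one must check uniform convergence and continuity of the series.

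Your finite form $H_0=\ell_u^T\ell_u-\ell_s^T\ell_s$ avoids infinite sums. With unit sections and $A_N(x)e_u(x)=\lambda^u_N(x)e_u(T_\alpha^Nx)$, and similarly for $e_s$, one gets $v^T(\TT^NH_0-H_0)v=(|\lambda^u_N|^2-1)\ell_u(v)^2+(1-|\lambda^s_N|^2)\ell_s(v)^2$. Since $\|v\|^2\le2(\ell_u(v)^2+\ell_s(v)^2)$, this is at least $\frac14\|v\|^2$ once $|\lambda^u_N|^2\ge2$ and $|\lambda^s_N|^2\le\frac12$, so no adapted metric is needed. The averaging identity $\TT H-H=\frac1N(\TT^NH_0-H_0)$, borrowed from the proof of Proposition~\ref{prop103crr}, then finishes. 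Both routes give a continuous $H$. The paper's route gives positivity in one step with an explicit constant; yours trades the series for a finite construction plus an averaging step.

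\textbf{The signature step in $(3)\Rightarrow(1)$.} Your treatment of indefiniteness is muddled. The remarks about definiteness ``everywhere along an orbit'' and about handling $H(x)\ge0$ via a dominated splitting are unnecessary. The needed argument is already contained in your growth estimate. If $H(x)\ge0$ at a single point, then $\mathcal C^+(x)=\R^2$. Your bound $\|A_n(x)v\|^2\ge(\delta/M)(1+\delta/M)^{n-1}\|v\|^2$, with $M=\|H\|_\infty$, then holds for every $v$. So both singular values of $A_n(x)$ exceed $1$ for large $n$, contradicting $\det A_n(x)=1$. The inverse cocycle with $-H$ excludes $H(x)\le0$. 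This is the same content as the paper's $\det G_n$ argument, and it is what makes $\mathcal C^\pm(x)$ a continuous family of proper projective intervals.

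\textbf{The nested cones.} Your claim that they shrink to a single line ``by the uniform growth rate'' is correct and slightly more explicit than the paper. In $\SL(2,\R)$ the projective derivative of $A_n$ at a unit vector $v$ is $\|A_nv\|^{-2}$, so the projective lengths of $A_n(T_\alpha^{-n}x)\mathcal C^+(T_\alpha^{-n}x)$ decay geometrically, uniformly in $x$. This also yields continuity of $E^u$, and symmetrically of $E^s$.
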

\begin{proof}
Applying Lemma~\ref{lem31hbb} with $F=0$ gives $(2)\Leftrightarrow(3)$, and Proposition~\ref{prop701mgg} gives the equivalence with $\chi(\alpha,A)=+\infty$ for $C^1$ cocycles.

To see that $(3)\Rightarrow(1)$, write $h_x(v)=v^TH(x)v$ and choose $\kappa>0$ such that
\begin{equation}\label{eq91stqq}
h_{T_\alpha x}(A(x)v)-h_x(v)\ge\kappa\|v\|^2, \quad x\in\T,\ v\in\R^2.
\end{equation}
Set $M=\|H\|_\infty>0$. We first show that $H(x)$ has signature $(1,1)$ at every $x\in\T$. Suppose that $H(x)\ge0$ at some point, and define
\[
G_n=A_n(x)^TH(T_\alpha^n x)A_n(x), \quad n\ge1.
\]
By~\eqref{eq91stqq}, $G_1\ge\kappa I$, and for $n\ge2$,
\[
G_n-G_{n-1}\ge\kappa A_{n-1}(x)^TA_{n-1}(x)
\ge\frac{\kappa}{M}G_{n-1}.
\]
By induction,
\[
G_n\ge\kappa\left(1+\frac{\kappa}{M}\right)^{n-1}I.
\]
Since $\det A_n=1$, this contradicts the boundedness of $\det G_n=\det H(T_\alpha^n x)$. Applying the same argument to the inverse cocycle with $-H$ excludes $H(x)\le0$.

Thus the positive cones $C_x^+=\{[v]\in\R\mathbb P^1:h_x(v)\ge0\}$ form a continuous family of closed projective intervals mapped strictly inside $\operatorname{int}(C_{T_\alpha x}^+)$ by $A(x)$. Moreover,~\eqref{eq91stqq} gives the uniform growth bound
\[
\|A_n(x)v\|^2\ge\frac{\kappa}{M}
\left(1+\frac{\kappa}{M}\right)^{n-1}\|v\|^2
\qquad (h_x(v)\ge0).
\]
Set $C_x^-=\{[v]\in\R\mathbb P^1:h_x(v)\le0\}$. Applying strict cone contraction to $A$ on $C^+$ and to the inverse cocycle on $C^-$ produces continuous invariant line bundles
\[
\begin{gathered}
E^u(x)=\bigcap_{n\ge1}A_n(T_\alpha^{-n}x)C_{T_\alpha^{-n}x}^+,\\
E^s(x)=\bigcap_{n\ge1}A_n(x)^{-1}C_{T_\alpha^n x}^-.
\end{gathered}
\]
The displayed growth bound on $C^+$ and its counterpart for the inverse cocycle on $C^-$ give a uniform exponential dichotomy.

Conversely, if $(\alpha,A)$ is uniformly hyperbolic with continuous invariant splitting $v=v_u+v_s$, setting $A_{-k}(x)=A_k(T_\alpha^{-k}x)^{-1}$ and defining
\[
h_x(v)=\sum_{k=1}^\infty\|A_{-k}(x)v_u\|^2-\sum_{k=0}^\infty\|A_k(x)v_s\|^2
\]
yields a continuous symmetric quadratic form satisfying $h_{T_\alpha x}(A(x)v)-h_x(v)=\|v_u\|^2+\|v_s\|^2\ge\frac12\|v\|^2$, proving $(1)\Rightarrow(3)$. Finally, when $\deg A\ne0$, Lemma~\ref{lem62ldd} rules out uniform hyperbolicity, so $\mathfrak M_{\alpha,A}\ne\varnothing$.
\end{proof}

For irrational $\alpha$ and real-analytic $A$, Corollary~\ref{cor902exx} and Propositions~\ref{prop103crr}, \ref{prop701mgg}, and~\ref{prop902ehh} imply that, when $d=0$ and $L(\alpha,A)>0$, vanishing excess acceleration $\omega_{\mathrm{ex}}(\alpha,A)=0$ is equivalent to $\mathfrak M_{\alpha,A}=\varnothing$, or equivalently to $\chi(\alpha,A)=+\infty$. When $d\ne0$, topological winding forces $\mathfrak M_{\alpha,A}\ne\varnothing$, and $\omega_{\mathrm{ex}}(\alpha,A)=0$ holds if and only if
\[
\operatorname{sgn}(d)\int_\T\tr(Q_A\,d\mu)>0 \quad \text{for all } \mu\in\mathfrak M_{\alpha,A}.
\]
This condition is equivalent to premonotonicity of $(\alpha,A)$. For $d>0$, it is also equivalent to $0<\chi(\alpha,A)<+\infty$.

\begin{remark}
Neither $L(\alpha,A)>0$ in the $d=0$ criterion nor $d\ne0$ in Theorem~\ref{thm11mnn} can be omitted: the constant cocycle $A\equiv J=R_{1/4}$ has $d=L=\omega=0$ without being uniformly hyperbolic, whereas $A\equiv I$ has $d=\omega=0$ without being premonotonic. Likewise, irrationality of $\alpha$ is essential in Theorem~\ref{thm11mnn}: for $\alpha=0$, the rotation loop $A(x)=R_{\phi(x)}$ with $\phi(x)=x-\frac{1}{2\pi}\sin(2\pi x)$ has $\deg A=1$, $L(0,A)=0$, and
\[
\omega(0,A)=\lim_{y\downarrow0}\frac1y\int_\T|\Im\phi(x+iy)|\,dx
=\int_\T(1-\cos 2\pi x)\,dx=1,
\]
yet $A(0)=I$ and $A'(0)=0$, so $(B A^N B^{-1})'(0)=0$ for every $N\ge1$ and every $B\in C^1(\T,\SL(2,\R))$.
\end{remark}

\appendix
\section{Finite Fourier extremality}\label{app10frr}

For trigonometric polynomial cocycles of nonzero degree, equality between the absolute topological degree and the Fourier degree implies strict monotonicity. The argument does not depend on the base frequency.

\begin{proposition}
Let $A:\T\to\SL(2,\R)$ be a trigonometric polynomial with Fourier support in $[-m,m]$ ($m\ge1$). If $\deg A=m$, then $A$ is strictly positively monotonic on $\T$; if $\deg A=-m$, then $A$ is strictly negatively monotonic on $\T$.
\end{proposition}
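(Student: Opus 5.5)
The plan is to follow the sketch in the introduction: turn the matrix statement into a statement about a single Laurent polynomial and read off positivity from the Poisson kernel. Treat $\deg A=m>0$ first. The case $\deg A=-m$ then follows by applying this case to $A(-x)$, which has the same Fourier support and degree $m$; reversing orientation flips the sign of $Q_A$.

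Fix $v\in\R^2\setminus\{0\}$ and set $f_v(x)=(A(x)v)_1+i(A(x)v)_2$. This is a trigonometric polynomial with frequencies in $[-m,m]$, so $p_v(z)=z^mf_v$, with $z=e^{2\pi ix}$, is a polynomial of degree at most $2m$. Since $A(x)$ is invertible, $f_v$ never vanishes on $\T$, so $p_v$ has no zeros on $\partial\D$.

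The first key step is computing the winding number of $f_v$. The curve $x\mapsto A(x)v$ in $\R^2\setminus\{0\}$ is homotopic to $R_{mx}v$, because $A$ deforms to $R_{mx}$ through loops in $\SL(2,\R)$ and invertibility keeps $A(x)v\ne0$ throughout. Hence $\wind f_v=m$ and $\wind p_v=2m$. By the argument principle, $p_v$ has exactly $2m$ zeros in $\D$ counted with multiplicity. Since $\deg p_v\le 2m$, $p_v$ has exact degree $2m$ and all of its zeros lie in $\D$. Write
\[
p_v(z)=c\prod_{j=1}^{2m}(z-a_j),\qquad |a_j|<1.
\]

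Next, compute the angular derivative. We have $\arg f_v(x)=\arg p_v(e^{2\pi ix})-2\pi mx$, and for each factor
\[
\tfrac{d}{dx}\arg(e^{2\pi ix}-a)=2\pi\,\Re\frac{e^{2\pi ix}}{e^{2\pi ix}-a}.
\]
Subtracting $\pi$ per factor (the $2m$ factors account for the $-2\pi m$) gives
\[
\pi\,\Re\frac{e^{2\pi ix}+a}{e^{2\pi ix}-a}=\pi\frac{1-|a|^2}{|e^{2\pi ix}-a|^2}.
\]
This yields the identity stated in the introduction, and every term is strictly positive since $|a_j|<1$. By \eqref{eq13angg}, $v^TQ_A(x)v>0$ for every $x\in\T$ and every $v\ne0$, so $Q_A(x)>0$ on $\T$.

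The main obstacle is the degree-counting step: showing there are exactly $2m$ zeros inside and none outside. The winding argument has to use the invertibility of $A$ to rule out zeros on the circle, and the bound $\deg p_v\le2m$ is what excludes zeros outside $\overline\D$, including any "zeros at infinity" that would appear if the degree dropped. Everything after that is an elementary Poisson-kernel calculation.
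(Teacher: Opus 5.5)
Your proposal is correct and follows the paper's proof essentially step for step. Both arguments use the polynomial $z^m f_v$, the argument-principle count placing all $2m$ zeros in $\D$ via the winding number $2m$ and the degree bound, the Poisson-kernel identity for $\frac{d}{dx}\arg f_v$ combined with \eqref{eq13angg}, and reflection $x\mapsto -x$ for the case $\deg A=-m$.
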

\begin{proof}
Assume $\deg A=m$. For any $v\in\R^2\setminus\{0\}$, the loop $f_v(x)=(A(x)v)_1+i(A(x)v)_2$ is nonvanishing on $\T$ with $\wind f_v=m$. Writing $z=e^{2\pi ix}$, $P_v(z)=z^m f_v(x)$ is a polynomial in $z$ of degree at most $2m$ with winding $2m$ on $|z|=1$. By the argument principle, $P_v$ has exactly $2m$ zeros $a_1,\dots,a_{2m}$ (counted with multiplicity), all lying strictly inside $\D$. Hence, for $|z|=1$,
\[
\frac{d}{dx}\arg f_v(x)
=2\pi\left(-m+\sum_{j=1}^{2m}\Re\frac{z}{z-a_j}\right)
=\pi\sum_{j=1}^{2m}\frac{1-|a_j|^2}{|z-a_j|^2}>0,
\]
which by \eqref{eq13angg} means $Q_A>0$ on $\T$. The case $\deg A=-m$ follows by reflection.
\end{proof}

\subsection*{Declaration}
The main mathematical ideas and the overall proof strategy were developed
by the authors. During revision of this manuscript, AI tools were used to
assist with checking calculations and polishing the text. The authors are
responsible for the mathematical content, references, and final text.

\end{document}